\def\blx@maxline{77}
\definecolor{blue75}{rgb}{0,0,.75}
\definecolor{green75}{rgb}{0,.75,0}
\newcommand{\cb}{\color{blue}}
\newcommand{\cred}{\color{red}}
\begin{document}
\newcommand{\R}{\mathbb{R}}
\newcommand{\N}{\mathbb{N}}
\newcommand{\Ss}{\mathbb{S}}

\newcommand{\calL}{\mathcal{L}}
\newcommand{\calI}{\mathcal{I}}
\newcommand{\Ker}{\text{Ker }}
\newcommand{\spn}{\text{span }}

\newcommand{\port}{p^\perp}

\newcommand{\ua}{u^{\alpha}}
\newcommand{\ub}{u^{\beta}}
\newcommand{\uaq}{u^{q-1+\alpha}}
\newcommand{\Tm}{T_{max}}
\newcommand{\oa}{\bar{\Omega}}
\newcommand{\bu}{\bar{u}}
\newcommand{\ot}{\Omega \times (0,T)}
\newcommand{\uj}{u_{{\cred l}k}}
\newcommand{\ujk}{u_{{\cred l}m_{\cred o}}}
\newcommand{\hj}{h_{{\cred l}k}}
\newcommand{\hjk}{h_{{\cred l}km_{\cred o}}}
\newcommand{\ck}{C^{\kappa,\frac{\kappa}{2}}(\oa \times [0,T])}
\newcommand{\io}{\int_{\Omega}}
\newcommand{\tr}{\text{tr}}
\newcommand{\D}{\mathbb{D}}
\newcommand{\dij}{d_{ij}}
\newcommand{\td}{\text{d}}
\newcommand{\bh}{\bar{h}}

\newtheorem{Theorem}{Theorem}[section]
\newtheorem{Assumptions}[Theorem]{Assumptions}
\newtheorem{Corollary}[Theorem]{corollary}
\newtheorem{Convention}[Theorem]{convention}
\newtheorem{Definition}[Theorem]{Definition}

\newtheorem{Lemma}[Theorem]{Lemma}
\newtheorem{Notation}[Theorem]{Notation}
\theoremstyle{definition}
\newtheorem{Example}[Theorem]{Example}
\newtheorem{Remark}[Theorem]{Remark}
\numberwithin{equation}{section}
\title{On a mathematical model for cancer invasion with repellent pH-taxis and nonlocal intraspecific interaction}
\author{Maria Eckardt\thanks{\href{mailto:eckardt@mathematik.uni-kl.de}{eckardt@mathematik.uni-kl.de}} \  and Christina Surulescu\thanks{\href{mailto:surulescu@mathematik.uni-kl.de}{surulescu@mathematik.uni-kl.de}}\\
	Felix-Klein-Zentrum für Mathematik, RPTU Kaiserslautern-Landau,\\ Paul-Ehrlich-Str. 31, 67663 Kaiserslautern, Germany
}

\date{\today}
\maketitle

\begin{abstract} 
	\noindent
	Starting from a mesoscopic description of cell migration and intraspecific interactions we obtain by upscaling an effective  reaction-difusion-taxis equation for the cell population density involving spatial nonlocalities in the source term and biasing its motility and growth behavior according to environmental acidity. We prove global existence, uniqueness, and boundedness of a nonnegative solution to a simplified version of the coupled system describing cell and acidity dynamics. A 1D study of pattern formation is performed. Numerical simulations illustrate the qualitative behavior of solutions.

\end{abstract}

\section{Introduction}

Migration, proliferation, and differentiation of cells are influenced by biochemical and biophysical characteristics of their surroundings, which they perceive by way of transmembrane units like ion channels, receptors, etc. Increasing experimental evidence suggests that cells are able to sense such cues not only where they are, but also at larger distances, up to several cell diameters around their current position \cite{GM17,Kornberg2014,SSM17}. This led to mathematical models accounting for various types of nonlocalities, most of them addressing cell-cell and/or cell-matrix adhesions; we refer to the review article \cite{CPSZ} and references therein. The settings typically involve reaction, diffusion and drift terms, whereby the latter contain an integral operator to characterize the so-called adhesion velocity over the interaction range. In \cite{Eckardt2020} was performed a rigorous passage from a  cell-matrix adhesion model to a reaction-diffusion-haptotaxis equation when the sensing radius is becoming infinitesimally small, thus recovering the local PDE formulation from that featuring the mentioned nonlocality. The remote sensing of signals by cells affects, however, not only motility, but also proliferation, growth, and phenotypic switch, either directly - by occupancy of transmembrane units on cellular extensions like cytonemes and folopodia and subsequently initiated signaling pathways, or in an indirect manner - as effects of altered migratory and aggregation behavior. Models involving reaction-diffusion equations with nonlocal source terms have been proposed in various contexts, including biological and ecological ones, see e.g. \cite{kav-suzuki,vol2} and references therein for rather generic settings, \cite{BCL,Bian2018b,Ren2022} for chemotaxis systems, and \cite{Negreanu2013,SMLC,SSMG} for equations dedicated to tumor growth. We refer to \cite{CPSZ,kav-suzuki,vol2} for some reviews of model classes addressing this type of nonlocality.\\[-2ex]

\noindent
As far as growth and migration of cell populations are concerned, the reaction-diffusion models with nonlocal source terms 
\begin{equation}
u_t=\nabla \cdot (D\nabla u)+F(u)
\end{equation}
typically feature $F(u)=\mu J*u(1-u)$ to describe nonlocal stimulation of growth (see e.g. \cite{SSMG,vol2}), or $F(u)=\mu u^\alpha (1-J*u^\beta)$, which characterizes competition between (bunches of) cells for available resources in their surroundings, attempting, e.g., to prevent overcrowding. In the context of (tumor) cell migration such models have been handled e.g. in \cite{SMLC}, where intra- and interspecific nonlocal interactions led to an ODE-PDE system for the interplay between cancer cells peforming linear diffusion and haptotaxis with the extracellular matrix being (nonlocally) degraded by the cells and remodeled with the mentioned growth limitation. We also refer to \cite{CPSZ,LCS} for short reviews of models with source terms of this type and therewith associated mathematical challenges.\\[-2ex] 

\noindent
In this note we propose and analyze a model for tumor cell migration involving myopic diffusion, repellent pH-taxis, and a nonlocal source term of the competition type mentioned above. The cross-diffusion system is obtained upon starting from the mesoscopic description of cell migration via a kinetic transport equation for the space-time distribution function of cells sharing some velocity regime. An appropriate upscaling relying on diffusion dominance then leads to the effective macroscopic equation for the cancer cell density, with precisely specified diffusion and drift coefficients. The remaining of this paper is structured as follows: Section \ref{sec:model} contains the model deduction with the mentioned upscaling. Section \ref{sec:analysis} is dedicated to the mathematical analysis of the obtained nonlocal macroscopic system, in terms of global existence, uniqueness, and boundedness of a solution to a simplified version of the problem. In section \ref{sec:asymptotic} we study the asymptotic behavior. Section \ref{sec:pattern} offers a 1D study of pattern formation for the equations handled in Section \ref{sec:analysis}, but only involving constant motility coefficients. In Section \ref{sec:numerics} we provide numerical simulations to illustrate the qualitative behavior of solutions to the investigated nonlocal problem. Section \ref{sec:discussion} contains a discussion of the results.

\section{Modeling}\label{sec:model}

In this section we start from a mesoscopic description of cell migration and intrapopulation interactions and deduce (in a non-rigorous way) effective equations on the macroscopic scale of cell population dynamics. The deduction closely follows that in \cite{LCS}, however extends it, by accounting here for the repellent effects of acidity eventually leading on the population scale to chemorepellent pH-taxis.\\[-2ex]

\noindent
Tumor migration and spread are typically assessed on the macroscopic scale of the cancer cell population via biomedical imaging. The involved processes are, however, highly complex and originate at the lower levels of cell aggregates sharing -beside time-space dynamics- one or several further traits (e.g., velocity, phenotypic state or other so called 'activity variables'), down to microscopic events on individual cells. This multiscale character of cell migration can be captured (at least partially) by models within the kinetic theory of active particles (KTAP) framework formulated by Bellomo et al. (see e.g., \cite{bellomo2008,BBGO17} and references therein). Starting from kinetic transport equations (KTEs), a large variety of (spatially) local and nonlocal models have been proposed and various kinds of upscaling and moment closure methods have been performed in order to deduce their macroscopic limits which enable a mathematically more efficient handling, see e.g. \cite{Corbin2018,CHP07,CEKNSSW,EHKS14,EHS,hillenM5,Conte2023,Conte_Surulescu,Kumar2021,Kumar2020,Chalub2004,dietrich2020,Engwer2015,ZSnovel21,Loy2019,conte-loy22}. The obtained macroscopic equations carry in the coefficients of their motility and source terms some of the traits from the mesoscale on which KTEs were formulated. Those coefficients are no longer 'guessed' as in the case of stating  reaction-diffusion-taxis directly on the population level and the diffusion is often of the 'myopic' type, involving a drift correction. We will perform here a diffusion-dominated upscaling of mesoscale dynamics. 

\noindent
We will use the following notations:
\begin{itemize}
	\item $p=p(t,x,v)$: distribution function of cells having at time $t$ and position $x\in \R^n$ the velocity $v\in V$;
	\item $V=[s_1,s_2]\times \Ss^{n-1}$: velocity space. Thereby, $s_1,s_2$ denote the minimum, respectively the maximum speed of a cell,  $\theta \in \Ss^{n-1}$ represents the cell direction;
	\item $u(t,x)=\int _Vp(t,x,v)\ dv$: macroscopic cell density;
	\item $h(t,x)$: concentration of protons. This is a macroscopic quantity throughout this note.
\end{itemize}

\noindent
The kinetic transport equation (KTE)
\begin{equation}\label{eq:KTE1}
p_t+v\cdot \nabla_xp=\calL [h]p+\tilde \mu \calI[p,p]
\end{equation}
characterizes the mesoscopic dynamics of the considered cell population. This is the framework set in \cite{ODA}, which assumes that changes in $p$ are due to velocity jumps accompanied by reorientations dictated by a turning kernel contained in the operator $\calL[h]$. \\[-2ex]

\noindent
The first term on the right hand side of \eqref{eq:KTE1} represents the so-called turning operator. The second term describes growth/decay of cells due to intraspecific proliferative/competitive interactions, while $\tilde \mu >0$ is the constant interaction rate.\footnote{We could actually consider $\tilde \mu $ to be a function of $x$ and/or $t$ (but not of derivatives w.r.t. these variables) and even of $h$. The latter would allow us to account e.g., for the unfavorable effect of acidity on the proliferation of tumor cells. The deduction done here works then exactly in the same way. In fact, our analysis in Section \ref{sec:analysis} is performed in the case where such $h$-dependence is considered.} With a small constant $\epsilon >0$ relating to the cell size and to the distance at which cells can sense signals in their proximity, we will assume that $\tilde \mu =\epsilon ^2\mu $. This means that cells have a much higher preference to motility (in particular, to changing direction) than to interaction and crowding. \\[-2ex]

\noindent
We assume that the turning operator is of the form
\begin{equation}\label{eq:calL-0}
	\calL[h](p)=\int _V\Big (T[h](v,v')p(t,x,v')-T[h](v',v)p(t,x,v)\Big )\ dv',
\end{equation}
with the turning rate $T[h](v,v')\ge 0$ chosen such that the reorientation is a Poisson process with rate
$$\lambda [h]=\int _VT[h](v,v')dv,$$
hence such that $T[h]/\lambda [h]$ is a kernel giving the probability density for a change of the velocity regime of a cell from $v'$ to $v$. In particular, this means that $\calL[h]$ is preserving mass. The reorientation of cells depends on the acidity of their environment (expressed by the concentration $h$ of protons).\\[-2ex]

\noindent
In the following we assume that the turning rate has an asymptotic expansion of the form
\begin{equation}\label{eq:turning-rate-exp}
T[h]=T_0[h]+\epsilon T_1[h]+O(\epsilon ^2),
\end{equation}	
thus the turning operator admits itself an expansion
\begin{equation}\label{eq:calL}
	\calL[h](p):=L_0[h](p)+\epsilon L_1[h](p)+O(\epsilon^2),
\end{equation}
where $L_0[h]$ and $L_1[h]$ are linear operators,
\begin{align}\label{eq:Ls}
L_i[h](p)(t,x,v)&=\int _V[T_i[h](v,v')p(t,x,v')-T_i[h](v',v)p(t,x,v)]\ dv',\qquad i=0,1.
\end{align}	

\noindent
For $\calI$ we consider as in \cite{LCS} the form
\begin{equation}\label{eq:calI}
\calI[p,p](t,x,v)=\frac{p^\alpha(t,x,v)}{\int _VM^\alpha(x,v)\ dv}-\frac{1}{\int _VM^{\alpha+\beta }(x,v)}p^\alpha (t,x,v)\int _\Omega J(x,x')	p^\beta (t,x',v)\ dx',
\end{equation}
where: $\alpha,\beta>0$ are constants, $J(x,x')$ is a function weighting the interactions between (bunches of) cells sharing the same velocity regime within a bounded domain $\Omega\subset \R^n$. We assume that $J$ depends on the distance between interacting (clusters of) cells and take $J(x,x')=J(x-x')$, also requiring $J$ to satisfy
\begin{align}\label{eq:cond-J}
& \int _VJ(x)\ dx=1\\
& \inf \limits_{B_{\text{diam}(\Omega)}(0)}J\ge \eta \quad \text{for some }\eta >0.
\end{align}
We also assume that there exists a bounded velocity distribution $M(x,v)>0$ such that:
\begin{enumerate}
	\item $\int _VM(x,v)\ dv=1$, i.e. $M$ is a kernel w.r.t. $v$.
	\item $\int _VvM(x,v)\ dv=0$, i.e. the flow produced by the equilibrium distribution $M(v)$ vanishes.
	\item The rate $T_0[h](v,v')$ satisfies the detailed balance equation
	$$T_0[h](v,v')M(v')=T_0[h](v',v)M(v).$$
	\item The turning rate $T_0[h](v,v')$ is bounded and there exists $\sigma >0$ such that
	$$T_0[h](v,v')\ge \sigma M(x,v),\quad \text{for all }(v,v')\in V\times V,\ x\in \R^n,\ t>0.$$
	\end{enumerate}

\noindent
The following lemma summarizing the properties of the operator $-L_0$ can be easily verified (see e.g. \cite{BB15,Chalub2004}).

\begin{Lemma}\label{lemma-BB}
Let $L_0[h]$ be the operator defined in \eqref{eq:Ls}. Then $-L_0[h]$ has the following properties:
\begin{itemize}
	\item [(i)] $-L_0[h]$ is positive definite w.r.t. the scalar product and the associated norm in the weighted space $L^2(V,\frac{dv}{M(x,v)})$, and self-adjoint: for all $p,\zeta \in L^2(V,\frac{dv}{M(x,v)})$ it holds that
	$$\int _VL_0[h](p)(v)\frac{\zeta(v)}{M(v)}\ dv=\int _VL_0[h](\zeta)(v)\frac{p(v)}{M(v)}\ dv.$$
	\item[(ii)] For $\phi \in L^2(V,\frac{dv}{M(x,v)})$, the equation $L_0[h](\zeta ) =\phi$ has a unique solution $\zeta \in L^2(V,\frac{dv}{M(x,v)})$ satisfying\footnote{Here and in the remaining of this section we use the notation $\bar \zeta :=\int _V\zeta (v)\ dv$ for any $V$-integrable function $\zeta$ (hence also $u=\bar p$).}
	$\bar \zeta =0$ iff $\bar \phi =0$. 
	\item[(iii)] $\Ker L_0[h]=\spn (M(v))$.
	\item[(iv)] The equation $L_0[h]( \psi) =vM(v)$ has a unique solution $\psi(v)=:L_0[h]^{-1}(vM(v))$ (this is actually a pseudoinverse).
\end{itemize}
\end{Lemma}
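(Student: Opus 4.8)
The plan is to exploit the detailed balance condition to put $L_0[h]$ into a manifestly symmetric form, from which (i) and (iii) follow almost immediately, and then to invoke the Fredholm alternative for the solvability statements (ii) and (iv).

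First I would introduce the symmetric weight $\omega(v,v'):=T_0[h](v,v')M(v')=T_0[h](v',v)M(v)$, the second equality being exactly detailed balance, so that $\omega(v,v')=\omega(v',v)$. A direct manipulation then rewrites
\[
L_0[h](p)(v)=\int_V\omega(v,v')\left(\frac{p(v')}{M(v')}-\frac{p(v)}{M(v)}\right)dv'.
\]
Pairing with $\zeta/M$ in the weighted inner product and symmetrizing in $v\leftrightarrow v'$ (legitimate since $\omega$ is symmetric) gives
\[
\int_V L_0[h](p)\,\frac{\zeta}{M}\,dv=-\frac12\int_V\int_V\omega(v,v')\left(\frac{p(v')}{M(v')}-\frac{p(v)}{M(v)}\right)\left(\frac{\zeta(v')}{M(v')}-\frac{\zeta(v)}{M(v)}\right)dv'\,dv.
\]
This bilinear form is visibly symmetric, which is the self-adjointness in (i); taking $\zeta=p$ shows $\int_V L_0[h](p)\,p/M\,dv\le 0$, so $-L_0[h]$ is positive. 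Since the lower bound $T_0[h]\ge\sigma M>0$ forces $\omega>0$, the quadratic form vanishes iff $p/M$ is a.e.\ constant, i.e.\ $p\in\spn(M)$; as $L_0[h](M)=0$ is immediate from the displayed formula, this establishes positive definiteness on the complement of the kernel and proves (iii), $\Ker L_0[h]=\spn(M)$.

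For (ii) I would split $-L_0[h]=\Lambda\,\mathrm{Id}-G$ into the loss term, multiplication by $\Lambda(v):=\int_V T_0[h](v',v)\,dv'$, and the gain operator $G(p)(v):=\int_V T_0[h](v,v')p(v')\,dv'$. The lower bound $T_0[h]\ge\sigma M$ gives $\Lambda\ge\sigma$, so multiplication by $\Lambda$ is bounded and boundedly invertible on $L^2(V,\frac{dv}{M(x,v)})$, while boundedness of $T_0[h]$ on the bounded velocity space $V$ makes $G$ compact (indeed Hilbert--Schmidt). Hence $-L_0[h]$ is a compact perturbation of an invertible operator, i.e.\ Fredholm of index zero, and the Fredholm alternative applies. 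Being self-adjoint, its range is the orthogonal complement in $L^2(V,\frac{dv}{M(x,v)})$ of $\Ker L_0[h]=\spn(M)$; and $\phi\perp M$ in this weighted space reads $\int_V\phi(v)\,dv=\bar\phi=0$. Thus $L_0[h](\zeta)=\phi$ is solvable precisely when $\bar\phi=0$, and since $\spn(M)$ meets $\{\bar\zeta=0\}$ only trivially, there is a unique solution normalized by $\bar\zeta=0$. Finally, (iv) follows by applying (ii) componentwise to $\phi=vM(v)$, whose integral $\int_V vM\,dv=0$ vanishes by the vanishing-flux assumption; the unique $\bar\psi=0$ solution is the pseudoinverse $L_0[h]^{-1}(vM)$.

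The main obstacle is the solvability claim (ii): one must certify that $L_0[h]$ has closed range equal to $\{\bar\phi=0\}$, which hinges on the compactness of the gain operator $G$ together with the coercive lower bound on $\Lambda$; the remaining parts are algebraic consequences of the symmetrization afforded by detailed balance.
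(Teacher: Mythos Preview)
Your argument is correct and is precisely the standard proof found in the references the paper cites (e.g.\ \cite{BB15,Chalub2004}); note that the paper itself does not supply a proof but merely states that the lemma ``can be easily verified'' and defers to those sources. Your symmetrization via the detailed-balance weight $\omega$ and the Fredholm argument via the loss/gain splitting are exactly what those references do, so there is nothing to compare.
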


\begin{Example}\label{bsp} Consider $T_0[h](v,v'):=\lambda _0[h]M(v)$, with $\lambda[h]\ge \lambda_0[h]>0$ for any $h$. This obviously satisfies the properties 3. and 4. in our above assumption. With this choice,
\begin{equation}\label{eq:partic-L_0}
L_0[h](p)=\lambda_0[h](M(v)u-p)
\end{equation}
and it is straightforward to see that this operator satisfies the properties in Lemma \ref{lemma-BB} and the function $\psi $ in (iv) becomes $\psi(v)=-vM(v)/\lambda_0[h]$ if $\psi \in (\spn (M(v)))^\perp$.\footnote{This is actually the case even if $T_0$ has a more general form (depending only on $v$ and not on $v'$) without having to satisfy condition 2. }
\end{Example}

\noindent
Equation \eqref{eq:KTE1} is supplemented with the macroscopic PDE for proton concentration:
\begin{equation}\label{eq:h}
h_t=D_H\Delta h+g(u,h),
\end{equation}
where $D_H>0$ is the diffusion constant and $g(u,h)$ represents production by tumor cells and uptake (e.g., by blood capillaries - not explicitly modeled in this note) or decay.

\noindent
We also consider initial conditions for $p$ and $h$:
\begin{align}\label{eq:ICs_p-h}
p(0,x,v)=p_0(x,v),\qquad h(0,x)=h_0(x), \quad x\in \Omega \subseteq \R^n,\ v\in V.	
\end{align}

\noindent
Together with these, equations \eqref{eq:KTE1},\eqref{eq:h} form a meso-macro system describing the dynamics of the (mesoscopic) cell distribution in response to acidity in the extracellular space.\\[-2ex]

\noindent
We perform a parabolic scaling to obtain the diffusion limit of the KTE \eqref{eq:KTE1}. This means that we rescale the time and space variables as follows: 
$$\hat t=\epsilon^2t,\quad \hat x=\epsilon x.$$
Subsequently we will drop the '\ $\hat {}$\ ' symbol and the $\epsilon $-dependency of the solution $p^\epsilon$ to the resulting KTE, in oder not to complicate the writing. Then, \eqref{eq:KTE1} becomes
\begin{equation}\label{eq:KTE2}
	\epsilon p_t+v\cdot \nabla_xp=\frac{1}{\epsilon}\calL[h]p+\mu \epsilon\calI[p,p].
\end{equation}

\noindent
Now consider the decomposition (Chapman-Enskog expansion)
\begin{equation}\label{eq:Chapman-Enskog}
p(t,x,v)=F(u)(t,x,v)+\epsilon \port (t,x,v),	
\end{equation}
with $\int _V\port (t,x,v)\ dv=0$, thus $\port \in (\spn(M(v)))^\perp$, and $F(u)\in \spn(M(v))$ such that $\int _VF(u)\ dv=u$. A natural choice is $F(u)(t,x,v):=M(x,v)u(t,x)$, which we will subsequently adopt.\\[-2ex]

\noindent
Then observe that $$\calI[p,p]=\calI[M(v)u+\epsilon \port,M(v)u+\epsilon \port]=\calI[M(v)u,M(v)u]+O(\epsilon)$$ and 
\eqref{eq:KTE2} becomes
\begin{align}\label{eq:KTE3}
	\partial_t(M(v)u)+\epsilon \partial_t\port +\frac{1}{\epsilon}v\cdot \nabla_x(M(v)u)+v\cdot \nabla_x\port =\frac{1}{\epsilon}L_0[h](\port)&+\frac{1}{\epsilon}L_1[h](M(v)u)+ L_1[h](\port) \notag \\
	&+\mu \calI[M(v)u,M(v)u]+O(\epsilon).
\end{align}

\noindent
Let $P:L^2(V,\frac{dv}{M(v)})\to \Ker L_0[h]$ be the projection operator. Then
$$P(\phi)=M(v)\bar \phi,\qquad  \phi \in L^2(V,\frac{dv}{M(v)}).$$

\noindent
It is easy to verify that the following lemma holds (see, e.g., \cite{BB15}).

\begin{Lemma}\label{lem:properties-of-projection}
The projection operator $P$ has the following properties:
\begin{itemize}
	\item [(i)] $(I-P)(M(v)u)=P(\port)=0$.
	\item[(ii)] $(I-P)(v\cdot \nabla_x(M(v)u))=v\cdot \nabla_x(M(v)u)$.
	\item[(iii)] $(I-P)(L_0[h](M(v)u))=L_0[h](M(v)u)$ and $(I-P)(L_1[h](M(v)u))=L_1[h](M(v)u)$.
	\item[(iv)] $(I-P)(L_1[h](\port))=L_1[h](\port)$.
\end{itemize}	
\end{Lemma}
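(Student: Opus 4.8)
The plan is to first observe that the stated formula $P(\phi)=M(v)\bar\phi$ makes $P$ the \emph{orthogonal} projection onto $\Ker L_0[h]=\spn(M(v))$ with respect to the weighted inner product of Lemma \ref{lemma-BB}. Indeed, using $\int_V M(x,v)\,dv=1$ one checks that for every $\phi$ the residual $\phi-P(\phi)$ is orthogonal to $M$:
\begin{equation*}
\int_V\big(\phi-M\bar\phi\big)\,M\,\frac{dv}{M}=\int_V\phi\,dv-\bar\phi\int_V M\,dv=\bar\phi-\bar\phi=0.
\end{equation*}
Consequently $(I-P)$ is the orthogonal projection onto $(\spn(M(v)))^\perp$, and the key characterization I will use throughout is that $(I-P)(\phi)=\phi$ holds if and only if $P(\phi)=0$, i.e. if and only if the velocity-average $\bar\phi=\int_V\phi\,dv$ vanishes. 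Thus each of the four claims reduces to computing a single $V$-average.

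For (i), the normalization $\int_V M\,dv=1$ gives $\overline{M(v)u}=u\int_V M\,dv=u$, so $P(M(v)u)=M(v)u$ and $(I-P)(M(v)u)=0$; the second identity is immediate since $\overline{\port}=0$ by the constraint $\int_V\port\,dv=0$ imposed in the Chapman--Enskog ansatz \eqref{eq:Chapman-Enskog}, whence $P(\port)=M(v)\cdot 0=0$. For (ii), I will pull the $x$-derivative out of the velocity integral and invoke the vanishing-flux condition $\int_V vM(x,v)\,dv=0$ (property 2 of $M$):
\begin{equation*}
\overline{v\cdot\nabla_x(M(v)u)}=\nabla_x\!\cdot\!\int_V vM(x,v)u\,dv=\nabla_x\!\cdot\!\Big(u\int_V vM(x,v)\,dv\Big)=0,
\end{equation*}
so $P$ annihilates $v\cdot\nabla_x(M(v)u)$ and $(I-P)$ acts as the identity on it.

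For (iii) and (iv) the decisive fact is that the turning operators $L_0[h]$ and $L_1[h]$ are mass-preserving, i.e. $\int_V L_i[h](\phi)\,dv=0$ for every admissible $\phi$. This I will verify directly from the common form \eqref{eq:Ls}: applying Fubini to $\int_V\int_V[T_i(v,v')\phi(v')-T_i(v',v)\phi(v)]\,dv'\,dv$ and relabelling the dummy variables shows that the two double integrals coincide and therefore cancel. Note that this holds for $L_1[h]$ as well, since it shares the structural form of $L_0[h]$, even though mass conservation was spelled out only for $\calL[h]$. Given this, $\overline{L_0[h](M(v)u)}=\overline{L_1[h](M(v)u)}=\overline{L_1[h](\port)}=0$, so $P$ annihilates each of these three functions and $(I-P)$ leaves them unchanged, which is exactly (iii) and (iv).

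Since every step is an elementary average computation, there is no genuine obstacle here; the only point deserving care is the mass-conservation identity for $L_1[h]$, which must be extracted from the kernel symmetrization rather than merely quoted, together with the consistent use of the two moment conditions on $M$.
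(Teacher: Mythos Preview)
Your proposal is correct. The paper does not supply a proof of this lemma at all---it simply states that the properties are easy to verify and refers to \cite{BB15}---so there is no ``paper's own proof'' to compare against; your argument, which reduces each claim to checking that the relevant $V$-average vanishes (using $\int_V M\,dv=1$, $\int_V vM\,dv=0$, $\int_V\port\,dv=0$, and the mass-preservation of the $L_i[h]$ from their kernel form), is exactly the standard verification the paper has in mind.
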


\noindent
If we now apply $I-P$ to \eqref{eq:KTE3} we get
\begin{align}\label{eq:KTE4}
\epsilon \partial_t\port +\frac{1}{\epsilon}v\cdot \nabla_x(Mu)+(I-P)(v\cdot \nabla_x\port)=\frac{1}{\epsilon}L_0[h](\port)&+\frac{1}{\epsilon}L_1[h](Mu)+L_1[h](\port) \notag \\
&+\mu \calI[Mu,Mu]+O(\epsilon).	
\end{align}

\noindent
Integrating \eqref{eq:KTE3} w.r.t. $v$ gives (at leading order) the macroscopic PDE\footnote{involving nonlocalities w.r.t. velocity}
\begin{equation}\label{eq:KTE5}
u_t+\int_Vv\cdot \nabla _x\port \ dv=\mu \int _V\calI[Mu,Mu]\ dv.
\end{equation}

\noindent
On the other hand, from \eqref{eq:KTE4} we obtain (again at leading order)
\begin{equation}\label{eq:KTE6}
L_0[h](\port)=v\cdot \nabla_x(Mu)-L_1[h](Mu).
\end{equation}

\noindent
Since $\int _VL_1[h](Mu)\ dv=0$, we see that the integral w.r.t. $v$ of the right hand side in \eqref{eq:KTE6} vanishes, so we can pseudo-invert $L_0[h]$ to obtain 
\begin{equation}\label{eq:p-orthog}
\port =L_0[h]^{-1}\Big (v\cdot \nabla_x(Mu)- L_1[h](Mu) \Big ).	
\end{equation}	

\noindent
Plugging this into 	\eqref{eq:KTE5} gives

\begin{equation}\label{eq:fast-macro}
u_t+\int _Vv\cdot \nabla_x\Big (L_0[h]^{-1}(v\cdot\nabla_x(Mu))- L_0[h]^{-1}(L_1[h](Mu)) \Big )	=\mu \int_V\calI[Mu,Mu]\ dv.
\end{equation}	

\noindent
For the right hand side in \eqref{eq:fast-macro} we have
$$\mu \int_V\calI[Mu,Mu]\ dv=\mu u^\alpha (1-J*u^\beta).$$

\noindent
For the first transport term on the left hand side we compute
\begin{align*}
\int _Vv\cdot \nabla _x	\Big (L_0[h]^{-1}(v\cdot\nabla_x(Mu))\Big )\ dv&=\nabla _x\cdot \Big (\frac{1}{\lambda_0[h]}\nabla _x \cdot \Big (\int _Vv\otimes vM(v)\ dv\ u\Big )\Big )\\
&=-\nabla _x\cdot \Big (\frac{1}{\lambda_0[h]}\nabla _x \cdot  (\mathbb D\ u )\Big ),
\end{align*}
where we applied the observations made at the end of Example \ref{bsp} and denoted by 
$$\mathbb D(x):=\int _Vv\otimes vM(x,v)\ dv$$
the diffusion tensor of tumor cells. \\[-2ex]

\noindent
For the second transport term on the left hand side of \eqref{eq:fast-macro} we have
\begin{align*}
-\int _Vv\cdot \nabla_x\Big (L_0[h]^{-1}(L_1[h](M(v)u))\Big )\ dv&=- \nabla_x\cdot \int _Vv	L_0[h]^{-1}(L_1[h](M(v)u))\ dv\\
&=-\nabla_x\cdot \int _VvM(v) \frac{1}{M(v)}L_0[h]^{-1}(L_1[h](M(v)u))\ dv\\
&=-\nabla_x\cdot \int _VL_0[h](\psi(v))\frac{1}{M(v)}L_0[h]^{-1}(L_1[h](M(v)u))\ dv\\
&=- \nabla_x\cdot \Big (\int _V\frac{\psi(v)}{M(v)}L_1[h](M(v))\ dv\ u\Big )\\
&= \nabla_x\cdot \Big (u\Gamma [h]\Big ),
\end{align*}	
where we used the fact that $L_0$ is self-adjoint, $\psi(v)=-vM(v)$ is its pseudo-inverse, and the notation
$$\Gamma [h](x):=\frac{1}{\lambda_0[h]}\int _VvL_1[h](M(x,v))\ dv.$$

\noindent
With the above calculations \eqref{eq:fast-macro} becomes
\begin{equation}\label{eq:echt-macro}
u_t	
-\nabla _x\cdot \Big (\frac{1}{\lambda_0[h]}\nabla _x \cdot  (\mathbb D\ u )\Big )
+\nabla_x\cdot \Big (u\Gamma [h]\Big )=\mu u^\alpha (1-J*u^\beta).
\end{equation}

\noindent
To specify $\Gamma[h]$ we consider\footnote{a similar choice has been proposed in \cite{Chalub2004}} 
$T_1[h](v,v'):=-a(h)v\cdot \nabla h+b(h)v'\cdot \nabla h$ with $a,b\ge0$. Then we compute
\begin{align*}
\int _V	vL_1[h](M(x,v))\ dv=-a(h)\frac{s_2^{n+2}-s_1^{n+2}}{n(n+2)}|\Ss^{n-1}|\ \mathbb I \nabla h-\frac{b(h)}{|V|}\mathbb D\nabla h,
\end{align*}
recalling that $V=[s_1,s_2]\times \Ss^{n-1}$, thus $|V|=\frac{s_2^n-s_1^n}{n}|\Ss^{n-1}|$. With the notation $\mathbb T(x):=a(h)\frac{s_2^{n+2}-s_1^{n+2}}{n(n+2)}|\Ss^{n-1}|\ \mathbb I+\frac{b(h)}{|V|}\mathbb D$ we obtain 
\begin{equation*}
	\Gamma [h](x)=-\frac{1}{\lambda_0[h]}\mathbb T(x)\nabla h,
\end{equation*}
which leads to the macroscopic PDE
\begin{equation}\label{eq:macro-PDE-1}
u_t=\nabla _x\cdot \Big (\frac{1}{\lambda_0[h]}\nabla _x \cdot  (\mathbb D(x) u )\Big )+\nabla _x\cdot \Big (\frac{u}{\lambda_0[h]}\mathbb T(x)\nabla h \Big )+\mu u^\alpha (1-J*u^\beta).
\end{equation}
The particular choice $\lambda_0[h]:=1$, $a(h):=0$, $b(h):=|V|$ leads to the first equation in \eqref{IBVP}.\\[-2ex]

\noindent
The first term on the right hand side of \eqref{eq:macro-PDE-1} represents (myopic) diffusion, the second one characterizes repellent chemotaxis, away from increasing gradients of proton concentration\footnote{as in \cite{Conte_Surulescu,CEKNSSW,Kolbe2021,Kumar2021,Kumar2020} we call this a repellent pH-taxis}, while the last is a source term accounting for tumor cell growth enhanced or limited by intraspecific interactions.\\[-2ex]

\noindent
The above deduction of a macroscopic reaction-diffusion-taxis is merely formal; the nonlinear source term prevents applying the proof of the rigorous derivation from \cite{Chalub2004}. The following section will be dedicated to proving global existence and boundedness of nonnegative solutions to the coupled PDE system for $u$ and $h$ obtained on the macrolevel by considering the above much simplified forms of the coefficient functions $\lambda_0,a, b$. The previous calculations were made for $x\in \R^n$, however we can restrict to a bounded domain $\Omega \subset \R^n$ upon proceeding as in \cite{CEKNSSW,dietrich2020,Plaza} and assuming no flux of cells or protons through the boundary.

\section{Mathematical analysis}\label{sec:analysis}

\noindent
Let $\Omega \subset \R^n$ be a bounded domain with smooth enough boundary and outer unit normal $\nu$. We consider the model
\begin{align} \label{IBVP}
\begin{cases}
u_t = \nabla\nabla:(\D(x)u) + \nabla \cdot (\D(x)u\nabla h) + \mu(h) \ua (1- J \ast \ub), & x\in \Omega, \, t > 0,\\
h_t = D_H \Delta h + g(u,h), & x \in \Omega, \, t > 0,\\
(\D(x)\nabla u  + \nabla \cdot \D(x)u + \D (x) u \nabla h) \cdot \nu = \nabla h \cdot \nu = 0, & x \in \partial \Omega, \, t>0,\\
u(x,0) = u_0(x), \, h(x,0) = h_0(x), & x \in \Omega,
\end{cases}
\end{align}
where $u$ denotes the cell density and $h$ the acid concentration. Here, the convolution over $\Omega$ is as usually given by 
\begin{align*}
J \ast \ub (x,t) = \int_{\Omega} J(x-y) \ub(y,t) \, \td y.
\end{align*}
For our diffusion tensor $\D = \left(d_{ij}\right)_{i,j=1,...,n}$, we assume that $\dij \in C^{1}(\oa)$. 
Moreover, $\D$ satisfies the uniform parabolicity and boundedness condition, i.e. there are $ B_1,\,  B_2 >0$ such that for all $\xi \in \R^n$ and $x \in \oa$ it holds that
\begin{align} \label{upc}
B_1 |\xi|^2 \leq \sum_{i, j =1}^n \dij(x) \xi_j \xi_i \leq  B_2 |\xi|^2.
\end{align} 
Additionally, we assume that for $x \in \partial \Omega$ and $\xi \in \R^n$ with $\xi \cdot \nu = 0$ on $\partial \Omega$ and $|\xi| \neq 0$ it holds that
\begin{align} \label{LS}
4 \sum_{i,j=1}^n \dij \xi_i \xi_k \sum_{k,l = 1}^n d_{kl} \nu_k \nu_l - \left(\sum_{i,j=1}^n \dij (\xi_i \nu_j + \xi_j \nu_i)\right)^2 >0.
\end{align}
Condition \eqref{LS} is for example satisfied if $\D$ is a multiple of the identity.\\[-2ex]

\noindent
The exponents $\alpha, \beta \geq 1$ satisfy (as in \cite{LCS}) 
\begin{align}\label{cond:alphabeta}
\alpha < \begin{cases} 1+ \beta, &\quad n= 1,2,\\ 1+ \frac{2\beta}{n}, &\quad n >2.\end{cases}
\end{align}
On the remaining functions and parameters we make the subsequent assumptions:
\begin{itemize}
\item $u_0 \in C(\oa)$ and $u_0 \geq 0$,
\item $h_0 \in W^{1,\infty}(\Omega)$ and $0 \leq h_0 \leq H$, $h_0 \not\equiv H$, where $H$ is a positive constant,
\item $\mu$ is Lipschitz-continuous with constant $L_{\mu}$, satisfying $0 \leq \mu$ and $\mu(h) \geq \delta >0$ for $h \leq H$,
\item $g \in C^1(\R_0^+ \times \R_0^+)$ with $\nabla g \in (L^{\infty}(\R_0^+ \times \R_0^+))^2$, $0 \leq g(u,0) \leq G$ and $g(u, H) \leq 0$ for $u \in \R_0^+$,
\item $J \in L^p(B_{\text{diam}(\Omega)}(0))$ for some $p \in (1, \infty)$, $0 < \eta \leq J$,
\item $D_H > 0$.
\end{itemize}
\noindent
By convention the term $K_i>0$ denotes a positive constant for all $i \in \N$ (or, respectively, a positive function of its arguments).


\subsection{Local existence in an approximate problem}
The Stone-Weierstraß theorem implies that there is a sequence of diffusion tensors $(\D_l)_{l \in \N}$ with $\D_l = \left(d_{lij}\right)_{i,j=1,...,n}$ s.t. $d_{lij} \in C^{2+\vartheta}(\oa)$ for $\vartheta \in (0,1)$ and $\D_l \rightarrow \D$ in $C^1(\oa)^{n\times n}$ for $l\to \infty$.  
Moreover, $\D_l$ satisfies \eqref{LS} and the uniform parabolicity condition for all $l \in \N$, i.e. there are $0 < D_1 < B_1 < B_2 < D_2$ such that for all $\xi \in \R^n$, $x \in \oa$ and $l \in \N$ it holds that
\begin{align} \label{upc}
D_1 |\xi|^2 \leq \sum_{i, j =1}^n d_{lij}(x) \xi_j \xi_i \leq D_2 |\xi|^2.
\end{align} 

\noindent
For $l \in \N$ we consider the approximate problem 
\begin{align} \label{IBVPL}
\begin{cases}
\partial_tu_l = \nabla\nabla:(\D_l(x)u_l) + \nabla \cdot (\D_l(x)u_l\nabla h_l) + \mu(h_l) \ua_l (1- J \ast \ub_l), & x\in \Omega, \, t > 0,\\
\partial_th_l = D_H \Delta h_l + g(u_l,h_l), & x \in \Omega, \, t > 0,\\
(\D_l(x)\nabla u_l + \nabla \cdot \D_l(x)u_l + \D_l (x) u_l \nabla h_l) \cdot \nu = \nabla h_l \cdot \nu = 0, & x \in \partial \Omega, \, t>0,\\
u_l(x,0) = u_0(x), \, h(x,0) = h_0(x), & x \in \Omega.
\end{cases}
\end{align}

\begin{Lemma} \label{wsollem}
For all $l \in \N$ there are $\Tm>0$ and a weak solution $(u_l,h_l)$ of \eqref{IBVPL} such that for all $T \in (0,\Tm)$ it holds that $u_l \in C(\oa \times [0,T])) \cap L^2(0,T;H^1(\Omega))$ and\footnote{$W^{k,l}_p(\ot )$ denotes the Sobolev space of functions having weak derivatives in $L^p(\ot)$, namely up to order $k$ w.r.t. space and up to order $l$ w.r.t. time} $h_l \in C(\oa \times [0,T]) \cap L^{\infty}(0,T;W^{1,{\infty}}(\Omega)) \cap W^{2,1}_{2}(\ot)$ and $(u,h)$ satisfies for a.e. $t \in (0,T)$ and all $\eta \in W^{1,1}_2(\ot)$ it holds that
\begin{align} 
&\int_{\Omega} u_l(x,t) \eta(x,t) \,dx - \int_0^t \int_{\Omega}  u_l \eta_t \,dx\,ds +  \int_0^t \int_{\Omega} \left( \nabla \cdot \D_l u_l + \D_l \nabla u_l + \D_l u_l\nabla h_l\right) \cdot \nabla \eta  \,dx\,ds \nonumber \\
&= \int_0^t \int_{\Omega} \mu(h_l)\ua_l (1- J \ast \ub_l)\eta \,dx\,ds + \int_{\Omega} u_0(x) \eta(x,0) \, dx, \label{wsu}\\
&u_l(0) = u_0 \text{ in } L^2(\Omega)  \label{uaw}
\end{align}
and
\begin{align} 
\partial_th_l &= D_H \Delta h_l + g(u_l,h_l) \quad &\text{ a.e. in } \Omega \times (0,\Tm) \label{wsh}\\
\nabla h_l \cdot \nu &= 0 &\text{ a.e. in } \partial \Omega \times (0,\Tm),  \\
h_l(0) &= h_0 &\text{ in } H^1(\Omega). \label{haw}
\end{align}
\noindent
It holds either $\Tm = \infty$ or $\Tm < \infty$ and
\begin{align} \label{limt}
\lim\limits_{t \nearrow \Tm} \left(\|u_l(\cdot,t)\|_{L^{\infty}(\Omega)} + \|h_l\|_{W^{1,{\infty}}(\Omega)} \right)  = \infty.
\end{align}
\end{Lemma}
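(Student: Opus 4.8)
The plan is to construct $(u_l,h_l)$ by a Banach fixed point argument on a short time interval, followed by a standard continuation to a maximal existence time $\Tm$. Throughout, the gain compared with \eqref{IBVP} is that the regularized tensor $\D_l$ has $C^{2+\vartheta}$ entries, so the principal part $\nabla\nabla:(\D_l u_l)=\nabla\cdot(\D_l\nabla u_l+(\nabla\cdot\D_l)u_l)$ is a uniformly parabolic divergence-form operator with smooth coefficients, and the conormal (no-flux) boundary operator in \eqref{IBVPL} satisfies the complementing condition by virtue of \eqref{LS}. This makes the full $L^p$- and Schauder theory for the associated linear initial-boundary value problem available.

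Fix $T\in(0,1]$ to be chosen small and let $R:=\|u_0\|_{C(\oa)}+1$. On the closed convex set $\mathcal M:=\{\bar u\in C(\oa\times[0,T]):\ \|\bar u\|_{C(\oa\times[0,T])}\le R\}$ I would define a map $\Phi$ in two steps. Given $\bar u\in\mathcal M$, first solve the scalar semilinear problem $\partial_th=D_H\Delta h+g(\bar u,h)$ with the homogeneous Neumann condition and datum $h_0$. Since $g$ is $C^1$ with bounded gradient, the nonlinearity is globally Lipschitz in $h$, so a unique solution $h$ exists on $[0,T]$; the sign conditions $g(\cdot,0)\ge0$, $g(\cdot,H)\le0$ together with $0\le h_0\le H$ yield $0\le h\le H$ by comparison, and maximal $L^p$-regularity combined with $h_0\in W^{1,\infty}(\Omega)$ and Sobolev embedding gives $h\in W^{2,1}_2(\ot)\cap L^\infty(0,T;W^{1,\infty}(\Omega))$ with $\nabla h$ H\"older continuous and bounded. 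Second, with this $h$ frozen, solve the linear parabolic problem
\begin{equation*}
\partial_tu=\nabla\cdot\bigl(\D_l\nabla u+(\nabla\cdot\D_l)u+\D_l u\nabla h\bigr)+\mu(h)\,(\bar u^+)^\alpha\bigl(1-J\ast(\bar u^+)^\beta\bigr),
\end{equation*}
with the conormal boundary condition and datum $u_0$, where I have truncated the source by the positive part; set $\Phi(\bar u):=u$. The right-hand side is bounded and H\"older in $(x,t)$ (using $J\in L^p$, Young's/H\"older's inequality, and boundedness of $\mu$ on $[0,H]$ and of $\bar u$), while the drift coefficient $\D_l\nabla h$ is bounded and H\"older, so the linear theory above furnishes a unique solution $u\in C(\oa\times[0,T])\cap L^2(0,T;H^1(\Omega))$ satisfying the weak formulation \eqref{wsu}.

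To run the contraction, note that for $\bar u_1,\bar u_2\in\mathcal M$ the maps $t\mapsto(t^+)^\alpha$ and $t\mapsto(t^+)^\beta$ are Lipschitz on $[0,R]$ (here $\alpha,\beta\ge1$ is used), $\mu$ is Lipschitz, and $h_1-h_2$ is controlled by $\bar u_1-\bar u_2$ through the Lipschitz dependence of the $h$-problem; an energy estimate for the linear $u$-problem applied to the difference then gives $\|\Phi(\bar u_1)-\Phi(\bar u_2)\|\le C(R)\,T^{\gamma}\|\bar u_1-\bar u_2\|$ for some $\gamma>0$, while the same estimates show $\Phi(\mathcal M)\subseteq\mathcal M$ once $T$ is small (the solution stays $C^0$-close to $u_0$). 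Shrinking $T$ makes $\Phi$ a contraction, and its unique fixed point is a local weak solution $(u_l,h_l)$ of \eqref{IBVPL}. Nonnegativity of $u_l$ is obtained a posteriori by testing the $u_l$-equation with the negative part $u_l^-$: at the fixed point the source is evaluated at $u_l$ itself, so on $\{u_l<0\}$ the truncated factor $(u_l^+)^\alpha$ vanishes, the parabolic and drift terms are absorbed via \eqref{upc} and the boundedness of $\nabla h_l$, and since $u_l^-(0)=0$ (as $u_0\ge0$) Gr\"onwall forces $u_l^-\equiv0$; hence $u_l\ge0$, the truncation is inactive, and $(u_l,h_l)$ solves the original problem. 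Finally, one extends to a maximal time $\Tm\in(0,\infty]$ by restarting the fixed point from $(u_l(\cdot,\tau),h_l(\cdot,\tau))$: the existence time produced above depends only on $\|u_l(\cdot,\tau)\|_{L^\infty(\Omega)}+\|h_l(\cdot,\tau)\|_{W^{1,\infty}(\Omega)}$, so the solution continues as long as this quantity stays bounded, which yields the blow-up dichotomy \eqref{limt}. The main obstacle is the linear step for the myopic-diffusion operator: one must verify that the conormal boundary value problem is well posed with the stated regularity, and it is precisely \eqref{LS} (the Lopatinskii--Shapiro / complementing condition) that guarantees the $L^p$- and Schauder estimates up to the boundary; the remaining work---the comparison bounds for $h$, the Lipschitz and boundedness estimates for the nonlocal source, and the continuation---is routine once this linear theory is in place.
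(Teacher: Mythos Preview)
Your strategy differs from the paper's: instead of a contraction, the paper runs a Leray--Schauder argument. It first approximates $u_0$ by Lipschitz data $u_{0k}$, defines for $\bar u$ in the convex set $S:=\{0\le\bar u\le M+1\}\subset L^\infty(\ot)$ the map $F(\bar u)=u_{lk}$ by first solving \eqref{IBVPH} and then the \emph{linear} problem \eqref{IBVPU}, where the source is written as $\mu(h_{lk})\bar u^{\alpha-1}(1-J\ast\bar u^{\beta})\,u_{lk}$ (one factor of $u_{lk}$ is kept, so nonnegativity and the self-map property come from the maximum principle without any truncation), and obtains compactness of $F$ from a uniform H\"older bound $\|u_{lk}\|_{\ck}\le C(l)$ via De Giorgi--Nash theory; a final limit $k\to\infty$ recovers the merely continuous datum $u_0$. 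Your Banach approach is legitimate and has the bonus of yielding uniqueness on the spot, but two points need tightening. First, the frozen source and the drift $\D_l\nabla h$ are bounded but not H\"older, since $\bar u$ is only continuous; this is harmless because you only need weak solvability, yet the claim as stated is incorrect. Second, and more seriously, an ``energy estimate'' on the difference produces $L^2$-type bounds, not the $C(\oa\times[0,T])$ smallness you need for a contraction on $\mathcal M$; to close the loop in $C^0$ you must instead invoke $L^\infty$-parabolic estimates for divergence-form operators (Aronson-type kernel bounds, or a Moser iteration that retains the factor $T^{\gamma}$), combined with the heat-semigroup bound $\|\nabla(h_1-h_2)(t)\|_{L^\infty}\le CT^{1/2}\|\bar u_1-\bar u_2\|_{L^\infty}$ for the drift difference. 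Once these are in place, the remainder of your outline---the positive-part truncation, testing with $u_l^-$ to remove it, and the continuation argument leading to \eqref{limt}---is correct.
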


\begin{proof}
Fix $l \in \N$. Due to the Stone-Weierstrass theorem there is a sequence $(u_{0k})_{k \in \N} \subset C^{0,1}(\oa)$, $u_{0k} \geq 0$ with limit $u_0$ in $C(\oa)$. We set $M := \sup_{k \in \N} \|u_{0k}\|_{L^{\infty}(\Omega)} < \infty$. 
For $h<0$ and $\bu \geq 0$ extend the coefficients by
$$g(\bu,h) := 2 g(\bu,0) - g(\bu,-h) \text{ and } \mu(h) := \mu(-h).$$ 
We show the existence of a solution $(u_{lk},h_{lk})$ of \eqref{IBVPL} with initial value $u_{0k}$ instead of $u_0$ in the sense of \eqref{wsu} and \eqref{wsh} for  $k \in \N$ by showing the existence of a fixed point of the operator $F$ introduced below similarly to \cite{TW}.
Namely, we define for some small enough $T>0$ the set
$$ S := \{ \bu \in L^{\infty}(\ot )\ : \, 0 \leq \bu \leq M+1 \text{ a.e. in } \ot \}.$$ For $\bu \in S$ we consider the IBVPs
\begin{align} \label{IBVPU}
\begin{cases}
\partial_t u_{lk} = \nabla\nabla:(\D(x)u_{lk}) + \nabla \cdot (\D(x)u_{lk} \nabla h_{lk}) + \mu(h_{lk}) \bu^{\alpha-1} (1- J \ast \bu^\beta )u_{lk}, & x\in \Omega, \, t \in (0,T),\\
(\D(x)\nabla u_{lk} + \nabla \cdot \D(x)u_{lk}+\D (x) u_{lk} \nabla h_{lk}) \cdot \nu = 0, & x \in \partial \Omega, \, t \in (0,T),\\
u_{lk}(x,0) = u_{0k}(x), & x \in \Omega,
\end{cases}
\end{align}
and
\begin{align} \label{IBVPH}
\begin{cases}
\partial_th_{lk} = D_H \Delta h_{lk} + g(\bu,h_{lk}), & x \in \Omega, \, t \in (0,T),\\
\nabla h_{lk} \cdot \nu = 0, & x \in \partial \Omega, \, t\in (0,T),\\
h_{lk}(x,0) = h_0(x), & x \in \Omega.
\end{cases}
\end{align}
Here, $T$ can be chosen independent of $\bu$ and $k$. Through a fixed point argument similar to \cite{HW}, we conclude that there is a unique function $h_{lk} \in L^{\infty}(0,T;W^{1,\infty}(\Omega))$ that satisfies
\begin{align} \label{semigrouph} 
h_{lk} = e^{tD_H \Delta}h_0 + \int_0^t e^{(t-s)D_H \Delta} g(\bu,h_{lk}) \, \td s.
\end{align}
Moreover, $h_{lk}$ is the unique weak solution of \eqref{IBVPH} in the sense that
for a.e. $t \in (0,T)$ and all $\eta \in W^{1,1}_2(\ot)$ it holds that
\begin{align*}
&\int_{\Omega} h_{lk}(x,t) \eta(x,t) \,dx - \int_0^t \int_{\Omega}  h_{lk} \eta_t \,dx\,ds + D_H \int_0^t \int_{\Omega} \nabla h_{lk} \cdot \nabla \eta \,dx\,ds\\
= &\int_0^t \int_{\Omega} g(\bar{u},h_{lk}) \eta \,dx\,ds + \int_{\Omega} h_0(x) \eta(x,0) \, dx.
\end{align*}  
\noindent
As in Lemma \ref{boundgh} below and due to Theorem IV.9.1 (and the remark at the end of that section) in \cite{LSU}, it follows that
\begin{align} \label{ghw1q}
\|\nabla h_{lk} \|_{L^{\infty}( \Omega \times (0,T))} \leq \Cl{gh}.
\end{align}
and
\begin{align} \label{hw212}
\|h_{lk}\|_{W^{2,1}_2(\ot)} \leq \Cl{hw212}.
\end{align}

\noindent
Hence, $h_{lk}$ solves \eqref{IBVPH} in the sense of \eqref{wsh}. Moreover, the continuity of $h_{lk}$ follows from Theorem 4 in \cite{DB} due to $W^{2,1}_2(\ot) \subset C(0,T;L^2(\Omega))$ and the embedding of $W^{1,\infty}(\Omega)$ into some H\"older space on $\Omega$. 
Now, Theorems III.5.1 and 7.1 in \cite{LSU} (that also hold for our no-flux boundary condition), Theorem 4 in \cite{DB}, and Gronwall's inequality imply that there is a unique $u_{lk}$ in the space $S \cap \ck$, such that it solves \eqref{IBVPU} in the sense of \eqref{wsu} (with $u_{0k}$ instead of $u_0$) and satisfies
\begin{align} \label{ukck}
\|u_{lk}\|_{\ck} \leq \Cl{ck}(l)
\end{align}
for some $\kappa \in (0,1)$. Note that $\Cr{gh}$, $\Cr{hw212}$ and $\Cr{ck}$ are independent from $\bu$ and $k$. Hence, the operator 
$$F:S \mapsto S, \quad \bu \mapsto u_{lk},$$ 
where $u_{lk}$ solves \eqref{IBVPU} for $\bu$ in the sense of \eqref{wsu}, is well-defined. Moreover, as $\ck \hookrightarrow \hookrightarrow C(\oa \times [0,T])$ due to the Arzel\`a-Ascoli theorem, $F$ maps bounded sets on precompact ones. To apply the Leray-Schauder theorem it remains to show that $F$ is closed and, consequently, a compact operator. Let
\begin{align}
\bu_m &\underset{m \to \infty}{\rightarrow} \bu \text{ in } L^{\infty}(\ot) \label{convub}\\
u_{lkm}:= F(\bu_m) &\underset{m \to \infty}{\rightarrow} u_{lk} \text{ in } L^{\infty}(\ot). \label{convu}
\end{align}
We want to show that $F(\bu) = u_{lk}$. \\[-2ex]

\noindent
Let $h_{lkm}$ be the solution of \eqref{IBVPH} that corresponds to $\bu_m$ for $m \in \N$. Due to \eqref{hw212} we conclude from the Lions-Aubin lemma and the Banach-Alaoglu theorem that there are $h_{lk} \in W^{2,1}_2(\ot)$ and subsequences
\begin{align}
h_{lkm_{o}} &\underset{o \to \infty}{\rightharpoonup} h_{lk} \text{ in } L^2(0,T;H^2(\Omega)), \nonumber\\
h_{lkm_{o}} &\underset{o \to \infty}{\rightarrow} h_{lk} \text{ in } L^2(0,T;H^1(\Omega)) \text{ and a.e. in } \ot, \label{konvhl2}\\
\partial_t h_{lkm_{o}} &\underset{o \to \infty}{\rightharpoonup} \partial_t h_{lk} \nonumber \text{ in } L^2( \Omega \times (0,T)).
\end{align} 
Therefore, due to \eqref{konvhl2} and the Lipschitz-continuity of $g$ for a.e. $t \in (0,T)$ and all $\eta \in W_2^{1,1}(\ot)$ it holds that $h_{lk}$ is a solution of \eqref{IBVPH} in the sense of \eqref{wsh}.\\[-2ex]

\noindent
From \eqref{wsu} we conclude as in \cite{LSU} that for a.e. $t\in (0,T)$ and all $\eta \in W_2^{1,1}(\ot)$ it holds that 
\begin{align} \label{gronwallu}
&\frac{1}{2} \|u_{lkm}(\cdot,t)\|_{L^2(\Omega)}^2 +  \int_0^t \int_{\Omega} \left( \nabla \cdot \D_l u_{lkm} + \D_l \nabla u_{lkm} + \D_l u_{lkm} \nabla h_{lkm}\right) \cdot \nabla u_{lkm} \, \td x\, \td s \nonumber\\
&= \int_0^t\int_{\Omega} \mu(h_{lkm})\bu_m^{\alpha-1}(1-J\ast \bu_m^{\beta})u_{lkm}^2 \, \td x \, \td s + \frac{1}{2}\|u_{0k}\|_{L^2(\Omega)}^2.
\end{align}
Using H\"older's and Young's inequalities and \eqref{ghw1q}, we estimate 
\begin{align}
 \left|\int_{\Omega} \D_l u_{lkm} \nabla h_{lkm} \cdot \nabla u_{lkm} \, \td x \right|
&\leq \Cr{gh}\|\D_l\|_{L^\infty(\Omega)} \|u_{lkm}\|_{L^2(\Omega)} \|\nabla u_{lkm}\|_{L^2(\Omega)}\notag \\
& \leq \Cl{constduh}{(l)}\|u_{lkm}\|_{L^2(\Omega)}^2 + 
\frac{D_1}{2} \| \nabla u_{lkm} \|_{L^2(\Omega)}^2.
\label{abschnhgn}
\end{align}
Inserting this into \eqref{gronwallu} and using \eqref{ghw1q}, Young's inequality, the continuity of $h_{lk}$ and the Lipschitz-continuity of $\mu$, we conclude that for a.e. $t \in (0,T)$ it holds 
\begin{align*}
&\frac{1}{2} \|u_{lkm}(\cdot,t)\|_{L^2(\Omega)}^2 + \frac{D_1}{4} \int_0^t \| \nabla u_ {lkm} \|_{L^2(\Omega)}^2 \, \td s \leq \Cl{d1dijnaz}{(l)}  \int_0^t \|u_{lkm}\|_{L^2(\Omega)}^2 + \frac{1}{2}\|u_{0k}\|_{L^2(\Omega)}^2.
\end{align*}
From Gronwall's inequality we obtain a constant $\Cl{gwu}{(l,{\cb k,T})}>0$ 
such that $\|\nabla u_{lkm}\|_{L^2(0,T;\Omega)} \leq \Cr{gwu}{(l,{\cb k,T})}$ for all $m \in \N$.\footnote{The majority of subsequent constants will depend on $T$, but we will omit it in the writing.}
 Hence, the Banach-Alaoglu theorem implies that (by switching to a subsequence, if necessary)
\begin{align}
\nabla u_{lkm_{o}} \underset{{o} \to \infty}{\rightharpoonup} \nabla u_{lk} \text{ in } L^2(\ot). \label{convux}
\end{align}
Then, \eqref{convub} - \eqref{konvhl2}, \eqref{convux} and the dominated convergence theorem imply that $u_{lk}$ is a solution of \eqref{IBVPU} in the sense of \eqref{wsu}, and therefore $F(\bu ) = u_{lk}$ and $F$ is a compact operator. Consequently, by a Leray-Schauder argument we obtain the existence of a fixed point $u_{lk}$ of $F$, that satisfies for a.e. $t \in (0,T)$ and all $\eta \in W_2^{1,1}(\Omega)$ the weak formulation \eqref{wsu} for $u_{0}$ replaced by $u_{0k}$. \\[-2ex]

\noindent
Now, \eqref{ukck} and the compact embedding of $\ck$ in $C(\oa \times [0,T])$ imply that there is a convergent subsequence of $(u_{lk})_k$ such that
\begin{align*}
u_{lk_o}  \underset{o \to \infty}{\rightarrow} u_{l} \text{ in } C(\oa \times [0,T]).
\end{align*}
Then, with the same arguments as before we obtain the desired weak solution $(u_{l},h_{l})$ of \eqref{IBVPL}. \\[-2ex]

\noindent
Finally, for such pair property \eqref{limt} follows from a standard extensibility argument.
\end{proof}

%

\begin{Theorem} \label{exloc}
There is $\Tm \in (0, \infty]$ and a unique solution $(u_l,h_l)$ of \eqref{IBVPL} with $0 \leq u_{l}$ and $0 \leq h_{l} {<} H$,
\begin{align} \label{ssol}
u_{l},h_{l} \in C(\oa \times [0, \Tm)) \cap C^{2,1}( \oa \times (0, \Tm)).
\end{align}
\end{Theorem}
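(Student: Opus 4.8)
The plan is to upgrade the weak solution furnished by Lemma~\ref{wsollem} to a classical one, while simultaneously establishing the sign conditions and uniqueness; the four ingredients I would treat in order are the bounds on $h_l$, nonnegativity of $u_l$, the regularity bootstrap, and uniqueness.

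First I would pin down the pointwise bounds on $h_l$. Since $h_l$ solves the scalar equation $\partial_t h_l = D_H\Delta h_l + g(u_l,h_l)$ with homogeneous Neumann data, the constants $0$ and $H$ serve as ordered sub- and supersolutions: the assumptions $g(u,0)\ge 0$ and $g(u,H)\le 0$ together with $0\le h_0\le H$ give $0\le h_l\le H$ by the parabolic comparison principle. To obtain the strict upper bound $h_l<H$ I would rewrite the equation for $w:=H-h_l\ge 0$ as $\partial_t w - D_H\Delta w - c\,w \ge 0$ with a bounded coefficient $c$ coming from the $C^1$-regularity of $g$, and apply the strong maximum principle using $h_0\not\equiv H$; this rules out that $h_l$ touches $H$ at any interior time. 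With $0\le h_l\le H$ in hand the coefficient $\mu(h_l)$ is genuinely bounded below by $\delta>0$, and the extensions of $g,\mu$ to negative arguments introduced in Lemma~\ref{wsollem} never come into play.

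Next, nonnegativity of $u_l$. The point is that $\alpha\ge 1$ makes the source term $\mu(h_l)\ua_l(1-J\ast \ub_l)$ vanish at $u_l=0$, so $0$ is a subsolution of the $u$-equation. This is consistent with, and in fact inherited from, the construction: in the fixed-point scheme the auxiliary problem \eqref{IBVPU} is \emph{linear} in the unknown, with bounded zeroth-order coefficient $\mu(h_{lk})\bu^{\alpha-1}(1-J\ast\bu^\beta)$, so the maximum principle yields $u_{lk}\ge 0$ from $u_{0k}\ge 0$, and the fixed point is moreover sought in the nonnegative set $S$. Nonnegativity is preserved under the locally uniform limits as $k\to\infty$ (with $l$ fixed) and under the extensibility continuation, whence $u_l\ge 0$, which also retroactively justifies writing $\ua_l$ and $\ub_l$.

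I would then bootstrap regularity on compact-in-time subsets of $\oa\times(0,\Tm)$. From \eqref{ukck} we have $u_l\in\ck$, so $g(u_l,h_l)$ is H\"older continuous; feeding this into the linear parabolic equation for $h_l$ and invoking interior and boundary Schauder estimates (legitimate since the Neumann problem for $D_H\Delta$ is smooth) upgrades $h_l$ to $C^{2+\kappa,1+\kappa/2}$, in particular $\nabla h_l$ becomes H\"older. Because $d_{lij}\in C^{2+\vartheta}$, the double-divergence operator $\nabla\nabla:(\D_l u_l)$ expands into a nondivergence second-order operator with H\"older coefficients plus lower-order terms; treating the taxis and nonlocal source as a given H\"older right-hand side, a further Schauder step gives $u_l\in C^{2+\kappa,1+\kappa/2}$, and iterating yields $u_l,h_l\in C^{2,1}(\oa\times(0,\Tm))$, the continuity up to $t=0$ being already part of Lemma~\ref{wsollem}. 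Finally, for uniqueness I would subtract the equations for two solutions with identical data and test the $u$-difference and the $h$-difference against themselves on $(0,t)$; boundedness of both solutions on $\oa\times[0,T]$ makes $s\mapsto s^\alpha$ and $s\mapsto s^\beta$ Lipschitz on the relevant range, $\mu$ and $g$ are Lipschitz, and the nonlocal term is controlled by Young's inequality, so every difference term is bounded by $\|u_l^1-u_l^2\|_{L^2}^2+\|h_l^1-h_l^2\|_{L^2}^2$ up to a small gradient contribution absorbed by the diffusion, and Gronwall's inequality forces the differences to vanish. The step I expect to be the main obstacle is the regularity bootstrap: correctly handling the double-divergence principal part $\nabla\nabla:(\D_l u_l)$ and matching the Schauder theory to the no-flux boundary condition so that the argument closes uniformly up to $\partial\Omega$.
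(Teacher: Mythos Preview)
Your outline matches the paper's proof in structure and in all essential ideas: comparison/strong maximum principle for $0\le h_l<H$, nonnegativity of $u_l$ inherited from the construction in Lemma~\ref{wsollem}, a parabolic regularity bootstrap, and an $L^2$ energy--Gronwall argument for uniqueness. The one place where the paper proceeds differently is the regularity step, precisely at the point you flag as the obstacle. Rather than invoking boundary Schauder directly, the paper first obtains interior $C^{2+\lambda,1+\lambda/2}$ for $h_l$ (Theorem~III.12.2 in \cite{LSU}) and then runs an $L^p$ iteration: for a.e.\ $T_0>0$ one has $h_l(\cdot,T_0)\in W^2_r\cap L^\infty$, and repeated use of Gagliardo--Nirenberg together with $W^{2,1}_r$ maximal regularity (Theorem~IV.9.1 in \cite{LSU} for $h_l$, Theorem~2.1 in \cite{DHP} for $u_l$ with its oblique boundary condition) climbs to some $r^*>n+2$; the embedding $W^{2,1}_{r^*}\hookrightarrow C^{1+\lambda^*,(1+\lambda^*)/2}(\bar\Omega\times[T_0,T_1])$ then feeds into a final boundary Schauder step (Theorem~5.18 in \cite{L}). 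This $L^p$ detour is exactly what circumvents the difficulty you anticipate---it secures regularity up to $\partial\Omega$ without needing $C^{2+\kappa}$-compatible data at the restarting slice---whereas your direct Schauder route would need an interior-in-time boundary estimate that tolerates rough data at $t=T_0$. One small slip: \eqref{ukck} is literally stated for the approximants $u_{lk}$; the paper re-derives the H\"older regularity for $u_l$ itself via Theorem~4 in \cite{DB}, though the uniform bound in \eqref{ukck} would also pass to the limit.
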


\begin{proof}
1. \textit{Regularity:} Let ${l \in \N},$ $0< T_1<\Tm $ and consider the weak solution $(u_l,h_l)$ from Lemma \ref{wsollem}. Again from Theorem 4 in \cite{DB} it follows that $u_l, h_l \in C^{\lambda, \frac{\lambda}{2}}(\oa \times (0,T_1])$ for some $\lambda \in (0,1)$. Combining this with the Lipshitz continuity of $g$, Theorem III.12.2 in \cite{LSU} implies $h_l \in C^{2+\lambda, 1+\frac{\lambda}{2}}(\Omega\times (0,T_1))$. \\[-2ex]

\noindent
Further, we know from Lemma \ref{wsollem} that $h_l \in W_2^{2,1}(\ot) \cap C(\oa \times [0,T_1])$ and hence, for a.e. $T_0 \in (0,T_1)$ it holds that $h_l(\cdot,T_0) \in H^2(\Omega) \cap L^{\infty}(\Omega)$ and $\nabla h_l(\cdot, T_0) \cdot \nu = 0$ a.e. on $\partial \Omega$. 

\noindent
If $ h_l(\cdot, T_0) \in W_r^2(\Omega) \cap  L^{\infty}(\Omega)$ for some $r \in (1,\infty)$, from the Gagliardo-Nirenberg inequality (see e.g. Corollary 5.1 in \cite{BM}) it follows that $h_l(\cdot,T_0) \in W_{r+1}^{2-\frac{1}{r+1}}(\Omega)$. Then, Theorem IV.9.1 in \cite{LSU} implies that $h_l \in W_{r+1}^{2,1}(\Omega \times (T_0,T_1))$. We start with $r=2$ and apply this procedure iteratively until $h_{l} \in W_{r^*}^{2,1}(\Omega \times (T_0,T_1))$ for some $r^* > n+2$. Then, $W_{r^*}^{2,1}(\Omega \times (T_0,T_1)) \hookrightarrow C^{1+\lambda^*,\frac{1+ \lambda^*}{2}}(\oa \times [T_0,T_1])$ for some $\lambda^* \in (0,1)$. Consequently, $h_{l} \in C^{2,1}(\Omega \times (0,T_1))$ with $\nabla h \in C(\oa \times (0,T_1])$ is a classical solution of the heat equation in \eqref{IBVPL}. Finally, Theorem 5.18 in \cite{L} implies that $h_{l} \in C^{2,1}(\oa \times (0, \Tm))$.
This follows analogously for $u_{l}$, thereby using Theorem 2.1 from \cite{DHP} instead of Theorem IV.9.1 in \cite{LSU}. \\[-2ex]

\noindent
The boundedness and nonnegativity of $h_l$ ($0 \leq h_l < H$) follow from the comparison principle of the semilinear heat equation with Neumann boundary condition and our assumptions on $g$.\\[-2ex] 

\noindent
2. \textit{Uniqueness:} With an ansatz similar to \cite{BCL} we want to show the uniqueness of the solution. Assume that there are two solutions $(u_{l,1},h_{l,1}), \, (u_{l,2}, h_{l,2})$ of \eqref{IBVPL} satisfying \eqref{ssol}. 
The functions $h_{l,1}$ and $h_{l,2}$ satisfy 
\begin{align*}
	\partial_t (h_{l,1} -h_{l,2}) = D_H \Delta (h_{l,1} - h_{l,2}) + g(u_{l,1},h_{l,1}) -g(u_{l,2},h_{l,2})
\end{align*}
in $\Omega\times (0,T_1)$. We multiply this equation with $h_{l,1} -h_{l,2}$ and integrate over $\Omega$. Then, using the boundary condition, the Lipshitz continuity of $g$, and Young's and Gronwall's inequalities, we conclude 
\begin{align} \label{normh1h2}
\|h_{l,1} - h_{l,2}\|_{L^{\infty}(0,T_1;L^2(\Omega))},\ \|\nabla(h_{l,1}-h_{l,2})\|_{L^2(\Omega \times (0,T_1))} \leq \Cl{ch12}(T_1) \|u_{l,1}-u_{l,2}\|_{L^2(\Omega \times (0,T_1)))}.
\end{align}

\noindent
Moreover, we can rewrite
\begin{align*}
\partial_t(u_{l,1}-u_{l,2}) =& \nabla \nabla :(\mathbb{D}_{l}(u_{l,1}-u_{l,2})) + \nabla \cdot(\mathbb{D}_{l}(u_{l,1}-u_{l,2})\nabla h_1) +  \nabla \cdot (\mathbb{D}_{l} u_{l,2}\nabla (h_{l,1}-h_{l,2})) \\ 
& + (\mu(h_{l,1})-\mu(h_{l,2}))u^{\alpha}_1(1-J\ast \ub_1) + \mu(h_{l,2}) (\ua_1-\ua_2)(1-J\ast \ub_1) + \mu(h_{l,2}) \ua_2J\ast (\ub_2-\ub_1).
\end{align*}
Again, we multiply this equation with $u_{l,1} - u_{l,2}$ and integrate over $\Omega$ for $t \in (0,T_1)$. Then, using the boundary condition together with Young's, H\"older's, and the Gagliardo-Nirenberg inequalities (also compare \eqref{abschnhgn}), the mean value theorem, and the boundedness of $u_{l,1}$ and $u_{l,2}$ on $\Omega \times (0,T_1)$ by some $\Cl{bu}(T_1, l)>0$, it follows that
\begin{align*}
&\frac{1}{2} \frac{d}{dt} \|u_{l,1} -u_{l,2}\|_{L^2(\Omega)}^2 + D_1 \|\nabla(u_{l,1} - u_{l,2})\|_{L^2(\Omega)}^2 \nonumber\\
\leq& - \int_{\Omega} \nabla \cdot (\mathbb{D}_{l}(u_{l,1}-u_{l,2}))\nabla (u_{l,1}-u_{l,2}) + (u_{l,1}-u_{l,2})(\mathbb{D}_{l}\nabla h_{l,1})\cdot \nabla (u_{l,1}-u_{l,2}) \notag \\
&+u_{l,2}(\mathbb{D}_{l}\nabla (h_{l,1}-h_{l,2}) ) \cdot \nabla (u_{l,1}-u_{l,2}) \, \td x\\
&+ \int_{\Omega}[(\mu(h_{l,1})-\mu(h_{l,2}))u^{\alpha}_1(1-J\ast \ub_1) + \mu(h_{l,2}) (\ua_1-\ua_2)(1-J\ast \ub_1)\notag \\
& + \mu(h_{l,2}) \ua_2J\ast (\ub_2-\ub_1)] (u_{l,1}-u_{l,2}) \td x\\
\leq& \Cl{ab1u1u2}(l)\left( \|u_{l,1}-u_{l,2}\|_{L^2(\Omega)}^2 + \|\nabla (h_{l,1}-h_{l,2})\|_{L^2(\Omega)}^2\right) + \frac{3 D_1}{4} \|\nabla(u_{l,1}-u_{l,2})\|_{L^2(\Omega)}^2\notag \\
&+ L_{\mu}\Cr{bu}(l)(1+\|J\|_{L^1}\Cr{bu}(l)^{\beta}) \|h_{l,1} -h_{l,2}\|_{L^1(\Omega)}\notag \\
&+ (L_{\mu} H +\mu(0))\left(\alpha \Cr{bu} (l)^{\alpha-1} (1+\|J\|_{L^1}\Cr{bu}(l)^{\beta}) +2 \Cr{bu}(l)^{\alpha+ \beta} \|J\|_{L^1}  \right) \|u_{l,1}-u_{l,2}\|_{L^1(\Omega)}.
\end{align*}
Integrating over $(0,t)$ for $t \in (0,T_1)$ and using \eqref{normh1h2} we conclude that for a.e. $t \in (0,T_1)$ it holds that
\begin{align*}
\|u_{l,1}-u_{l,2}\|_{L^2(\Omega)}^2 \leq& \Cl{ab2u1u2} (T_1,l) \left(\int_0^t \|u_{l,1}-u_{l,2}\|_{L^2(\Omega)}^2 + \|\nabla(h_{l,1}-h_{l,2})\|_{L^2(\Omega)}^2 + \|h_{l,1}-h_{l,2}\|_{L^2(\Omega)}^2 \right)\\
\leq& \Cl{ab3u1u2}(l) \int_{0}^t \|u_{l,1}-u_{l,2}\|_{L^2({\Omega})}^2.
\end{align*}
Consequently, Gronwall's inequality implies $u_1 \equiv u_2$ a.e. on $\Omega \times (0,T_1).$
\end{proof}

\subsection{Global existence and boundedness of $u$ in the approximate problem}
\begin{Lemma} \label{boundgh}
It holds that
\begin{align*}
\|\nabla h_{l}\|_{L^{\infty}(\Omega\times (0,\Tm))} \leq \Cl{cboundgh}.
\end{align*}
\end{Lemma}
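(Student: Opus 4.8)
The plan is to reduce the claim to the uniform boundedness of the reaction term $g(u_l,h_l)$ and then to exploit the smoothing action of the Neumann heat semigroup. By Theorem~\ref{exloc} we already know $0\le h_l<H$ on $\Omega\times(0,\Tm)$. Since $\nabla g\in(L^\infty(\R_0^+\times\R_0^+))^2$, the map $s\mapsto g(u,s)$ is Lipschitz with a constant independent of $u$, so writing $g(u_l,h_l)=g(u_l,0)+\int_0^{h_l}\partial_h g(u_l,s)\,\td s$ and using $0\le g(\cdot,0)\le G$ together with $0\le h_l<H$ gives
\begin{align*}
\|g(u_l,h_l)\|_{L^\infty(\Omega\times(0,\Tm))}\le G+\|\nabla g\|_{L^\infty}H=:K,
\end{align*}
a bound uniform in time and independent of $l$. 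Hence $h_l$ solves a heat equation with homogeneous Neumann data and uniformly bounded forcing.

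Next I would represent $h_l$ through the variation-of-constants formula already used in \eqref{semigrouph}, namely
\begin{align*}
h_l(\cdot,t)=e^{tD_H\Delta}h_0+\int_0^t e^{(t-s)D_H\Delta}\,g(u_l,h_l)(\cdot,s)\,\td s,
\end{align*}
and differentiate in space. For the first term the uniform $W^{1,\infty}$-boundedness of the semigroup yields control by $\|h_0\|_{W^{1,\infty}(\Omega)}$. For the integral term I would invoke the gradient smoothing estimate for the Neumann heat semigroup: there exist $C>0$ and $\lambda_1>0$ (the first nonzero Neumann eigenvalue of $-D_H\Delta$) with
\begin{align*}
\|\nabla e^{\tau D_H\Delta}w\|_{L^\infty(\Omega)}\le C\,(1+\tau^{-\frac12})\,e^{-\lambda_1\tau}\,\|w\|_{L^\infty(\Omega)},\qquad \tau>0,
\end{align*}
the decay coming from the fact that $\nabla e^{\tau D_H\Delta}$ annihilates constants and thus sees only the mean-zero part of $w$. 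Combining this with the bound $K$ and integrating gives
\begin{align*}
\left\|\int_0^t \nabla e^{(t-s)D_H\Delta}g(u_l,h_l)(\cdot,s)\,\td s\right\|_{L^\infty(\Omega)}\le CK\int_0^{\infty}(1+\tau^{-\frac12})e^{-\lambda_1\tau}\,\td\tau<\infty,
\end{align*}
the last integral converging since $\tau^{-1/2}$ is integrable at the origin and the exponential handles large $\tau$. Adding the two contributions yields the asserted bound $\Cr{cboundgh}$, independent of $t$ and of $l$.

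The step I expect to be the main obstacle is precisely securing \emph{time-independence} of the constant, since $\Tm$ may equal $\infty$ and this lemma is meant to feed into the global-existence argument. A direct appeal to parabolic $L^q$-theory (Theorem~IV.9.1 in \cite{LSU}) for the bounded forcing does give $h_l\in W^{2,1}_q(\ot)$ and, for $q>n+2$, a H\"older gradient bound through the embedding $W^{2,1}_q(\ot)\hookrightarrow C^{1+\lambda,\frac{1+\lambda}{2}}(\oa\times[0,T])$; however the resulting constant a priori grows with $T$ via $\|g(u_l,h_l)\|_{L^q(\ot)}\le K|\Omega|^{1/q}T^{1/q}$. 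The exponential factor in the semigroup estimate above is exactly what removes this $T$-dependence; alternatively one may apply the $L^q$-estimate on the shifted unit intervals $(t-1,t)$ (and on $(0,1)$ for small times) and patch, using $0\le h_l<H$ to control the data at the interior times. Everything else is routine.
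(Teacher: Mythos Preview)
Your proof is correct and follows essentially the same route as the paper: bound $g(u_l,h_l)$ uniformly via $0\le h_l<H$ and the assumptions on $g$, then apply the gradient smoothing estimate for the Neumann heat semigroup with exponential decay to the variation-of-constants formula, obtaining a time-independent constant from $\int_0^\infty(1+\tau^{-1/2})e^{-\lambda_1\tau}\,\td\tau<\infty$. The only cosmetic difference is that the paper first derives the bound in $L^q$ for finite $q$ (quoting the semigroup estimates in that form) and then lets $q\to\infty$, whereas you work directly in $L^\infty$; the content is the same.
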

\begin{proof}
With Lemma 1.3 from \cite{W2} and \eqref{semigrouph} we estimate for $t \in (0,\Tm)$ that
\begin{align*}
\|h_l(t)\|_{L^{q}(\Omega)} \leq \Cl{cw13}\left( e^{-\lambda_1D_Ht} \|h_0\|_{L^q(\Omega)} + \int_0^t \left(1+\frac{1}{(D_H(t-s))^{\frac{1}{2}}}\right)e^{-\lambda_1D_H(t-s)}\|g(u_l,h_l)\|_{L^{\infty}} \right) \td s
\end{align*}
holds for $q \in (1,\infty)$, where $\lambda_1$ is the first eigenvalue of $-\Delta$ on $\Omega$ with Neumann boundary condition. Using the properties of $g$ and the boundedness of $h$, we obtain that for $t \in (T_0 ,\Tm)$ it holds that
\begin{align*}
\|\nabla h_l\|_{L^{q}(\Omega)}
\leq& \Cr{cw13} e^{-\lambda_1 D_H t} \|\nabla h_0\|_{L^q(\Omega)} + \Cr{cw13}(\|\partial_hg\|_{L^{\infty}}H+G) \int_0^t \left(1+\frac{1}{(D_H(t-s))^{\frac{1}{2}}}\right)e^{-\lambda_1D_H(t-s)} \, \td s\\
\leq& \Cr{cw13}\|\Omega|^{\frac{1}{q}}\|\nabla h_0\|_{L^{\infty}(\Omega)}+ \frac{\Cr{cw13}(\|\partial_hg\|_{L^{\infty}}H+G) }{D_H}\left(\frac{1}{\lambda_1} + \frac{\sqrt{\pi}}{\sqrt{D_H}} \right) .
\end{align*}
Consequently,
\begin{align*}
\|\nabla h_l\|_{L^{\infty}(\Omega)} =& \lim\limits_{q\to\infty}\|\nabla h_l\|_{L^{q}(\Omega)} \\
\leq& \Cr{cw13}\|\nabla h_0\|_{L^{\infty}(\Omega)}+ \frac{\Cr{cw13}(\|\partial_hg\|_{L^{\infty}}H+G) }{D_H}\left(\frac{1}{\lambda_1} + \frac{\sqrt{\pi}}{\sqrt{\lambda_1}} \right) =: \Cr{cboundgh}
\end{align*}

\end{proof}

\noindent
We will show the global boundedness of $u$ as in the proof of Theorem 1.1 in \cite{LCS}.

\begin{Lemma} \label{ulq}
It holds that $u_l \in L^{\infty}(0,\Tm;L^q(\Omega))$ for $q \in [1, \infty)$. 
\end{Lemma}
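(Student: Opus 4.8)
The plan is to derive a differential inequality for $y_q(t):=\io u_l^q\,\td x$ and close it by Gronwall, exploiting the nonlocal competition term as the mechanism that tames the superlinear source. Since $\Omega$ is bounded, $L^q(\Omega)\hookrightarrow L^{q'}(\Omega)$ for $q'\le q$, so it suffices to treat $q$ large; in particular I may assume $q-1+\alpha\ge\beta$. As a preliminary I would establish a uniform $L^1$-bound: integrating the first equation of \eqref{IBVPL} over $\Omega$ and using $J\ge\eta$ together with $\mu(h_l)\in[\delta,\bar\mu]$ (the upper bound $\bar\mu:=\mu(0)+L_\mu H$ coming from $0\le h_l<H$ and the Lipschitz continuity of $\mu$) gives $\frac{d}{dt}\|u_l\|_{L^1(\Omega)}\le\big(\io u_l^{\alpha}\big)\big(\bar\mu-\delta\eta\io u_l^{\beta}\big)$. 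A dichotomy on the size of $\io u_l^{\beta}$, combined with Hölder's inequality (valid since $\beta\ge1$), then shows that $\|u_l(\cdot,t)\|_{L^1(\Omega)}$ stays bounded on $(0,\Tm)$.

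For the $L^q$-estimate I would test the solution -- regular by Theorem \ref{exloc} -- with $u_l^{q-1}$. Writing the diffusion operator $\nabla\nabla:(\D_l u_l)$ in the divergence form of \eqref{wsu}, integration by parts and the uniform parabolicity \eqref{upc} produce the good dissipation $-c_q\io|\nabla u_l^{q/2}|^2$. The two remaining flux contributions -- the drift correction $(\nabla\cdot\D_l)u_l$ and the chemotactic flux $\D_l u_l\nabla h_l$ -- are handled by Young's inequality, using the $L^\infty$-bound on $\nabla h_l$ from Lemma \ref{boundgh} and the $C^1$-bounds on $\D_l$; each absorbs a fraction of the dissipation and leaves a harmless $C\io u_l^q$ term. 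In the reaction term, $\mu(h_l)\le\bar\mu$ bounds the growth part by $\bar\mu\io u_l^{q-1+\alpha}$, while $\mu(h_l)\ge\delta$ and $J\ast u_l^{\beta}\ge\eta\|u_l\|_{L^\beta}^{\beta}$ convert the competition part into the strong damping $-\delta\eta\|u_l\|_{L^\beta}^{\beta}\io u_l^{q-1+\alpha}$.

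The heart of the argument is dominating the superlinear source $\bar\mu\io u_l^{q-1+\alpha}$. I would apply a Gagliardo-Nirenberg inequality to $w:=u_l^{q/2}$, interpolating $\|w\|_{L^a}^{a}=\io u_l^{q-1+\alpha}$ (with $a=2(q-1+\alpha)/q$) between $\|\nabla w\|_{L^2}$ and the base norm $\|w\|_{L^{2\beta/q}}$, which is a power of $\|u_l\|_{L^\beta}$. A direct computation shows that the interpolation exponent $\theta$ satisfies the subcriticality $a\theta<2$ -- needed to absorb $\|\nabla w\|_{L^2}^{a\theta}$ into the dissipation by Young's inequality -- exactly when \eqref{cond:alphabeta} holds (for $n>2$ the inequality $a\theta<2$ reduces to $n\alpha<n+2\beta$). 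This leaves residual powers of $\|u_l\|_{L^\beta}$, which I would absorb into the damping: lower-bounding $\io u_l^{q-1+\alpha}\ge c\|u_l\|_{L^\beta}^{q-1+\alpha}$ by the power-mean inequality (using $q-1+\alpha\ge\beta$) makes the damping of order $\|u_l\|_{L^\beta}^{\beta+q-1+\alpha}$, a strictly higher power than the residuals, so a further Young step swallows them up to constants.

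At this point the inequality reads $\frac{d}{dt}\io u_l^q+\tfrac{c}{2}\io|\nabla u_l^{q/2}|^2\le C\io u_l^q+C$. Invoking the $L^1$-bound and the Gagliardo-Nirenberg/Poincaré estimate $\io u_l^q\le\epsilon\io|\nabla u_l^{q/2}|^2+C_\epsilon\|u_l\|_{L^1(\Omega)}^{q}$ turns both the surviving dissipation and the $C\io u_l^q$ term into $\frac{d}{dt}\io u_l^q+c'\io u_l^q\le C$, whence Gronwall's inequality gives $\io u_l^q(t)\le\max\{\io u_0^q,\,C/c'\}$ uniformly on $(0,\Tm)$, i.e. $u_l\in L^\infty(0,\Tm;L^q(\Omega))$. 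I expect the main obstacle to be the third step: the superlinear source cannot be absorbed by diffusion alone -- that would force the stronger restriction $\alpha<1+2/n$ -- so one must genuinely use the nonlocal competition, delivered through $J\ge\eta$, to supply the $L^\beta$-control, and then check that the Gagliardo-Nirenberg exponents and the leftover $\|u_l\|_{L^\beta}$-powers balance precisely under \eqref{cond:alphabeta}.
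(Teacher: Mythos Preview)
Your proposal is correct and follows essentially the same strategy as the paper: test with $u_l^{q-1}$, extract dissipation from the uniform parabolicity, control the drift and taxis fluxes via Young and the bound on $\nabla h_l$ from Lemma~\ref{boundgh}, and then use the nonlocal competition (through $J\ge\eta$, $\mu\ge\delta$) together with a Gagliardo--Nirenberg interpolation on $u_l^{q/2}$ to absorb the superlinear source $\int u_l^{q-1+\alpha}$ precisely under \eqref{cond:alphabeta}. The only difference is organizational: instead of your preliminary $L^1$-bound and final Poincar\'e step, the paper uses the elementary inequality $\int u_l^{q}\le\int u_l^{q+\alpha-1}+|\Omega|$ to fold the drift-induced $C\int u_l^{q}$ into the source term, so that dissipation plus damping absorb everything in one stroke and the closed ODE $\frac{d}{dt}\|u_l\|_{L^q}^q+c\|u_l\|_{L^q}^q\le C$ follows directly without any separate low-norm input.
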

\begin{proof}
Let $q \geq \max\{1, \beta + \alpha-1 \}$. Due to \eqref{ssol} the terms in the estimates below are well-defined for a.e. $t \in (0,\Tm)$. 
Multiplying the first equation of \eqref{IBVPL} by $q u_l^{q-1}$, integrating over $\Omega$ and using partial integration, we obtain
\begin{align}
\frac{d}{dt}\int_{\Omega} u_l^q \, dx =& q\int_{\Omega} \nabla \cdot \left(\D_l \nabla u_l + \nabla \cdot \D_l u_l^{q-1} + \D_l u_l \nabla h_l \right) u_l^{q-1} + \mu(h) \uaq_l (1- J \ast \ub_l) \,\td x \nonumber\\
=& - q(q-1) \int_{\Omega} u_l^{q-2} (\D_l \nabla u_l) \cdot \nabla u_l + u_l^{q-1} \nabla \cdot \D_l \cdot \nabla u_l +u_l^{q-1} (\D_l \nabla h_l) \cdot \nabla u_l \td x \nonumber \\
&+ q\int_{\Omega} \mu(h_l) \uaq_l (1- J \ast \ub_l) \, dx. \label{abschup1}
\end{align}
\noindent
Using the uniform parabolicity of $\mathbb{D}$, we estimate
\begin{align*}
q(q-1) \int_{\Omega} u_l^{q-2} (\D_l \nabla u_l) \cdot \nabla u_l \, \td x = \frac{4(q-1)}{q} \sum_{i,j = 1}^n \int_{\Omega} d_{_lij} \left(u_l^{\frac{q}{2}}\right)_{x_i}\left(u_l^{\frac{q}{2}}\right)_{x_j} \, \td x \geq \frac{4(q-1)}{q}{D_1} \int_{\Omega} |\nabla u_l^{\frac{q}{2}}|^2 \, \td x.
\end{align*}

\noindent
Further, due to Young's inequality we obtain the estimate
\begin{align*}
&q(q-1) \left|\int_{\Omega} u^{q-1} \nabla \cdot \D \cdot \nabla u + u^{q-1}(\D\nabla h) \cdot \nabla u \, \td x \right| \\
\leq &\frac{2(q-1)}{q }D_1 \int_{\Omega} |\nabla u^{\frac{q}{2}}|^2 \, \td x + \frac{(q-1)}{D_1}\left(\|\nabla \cdot \D\|_{\infty}^2 + \|\D\|_{\infty}^2 \|\nabla h\|_{\infty}^2  \right) \int_{\Omega} u^q \, \td x.
\end{align*}

\noindent
Inserting these estimates into \eqref{abschup1} and using our assumptions on $\mu$ and $J$, the boundedness of $h$, and 
\begin{align} \label{abschuqa}
\int_{\Omega} u_l^q \, \td x \leq \int_{\Omega} u_l^{q + \alpha -1} \, \td x + |\Omega|,
\end{align}
it follows that
\begin{align} \label{abschup2}
\frac{d}{dt} \|u_l\|_{L^q}^q + \frac{2(q-1)}{q }D_1 \int_{\Omega} |\nabla u_l^{\frac{q}{2}}|^2 \, \td x + q\eta \delta \int_{\Omega} \uaq_l \, \td x \int_{\Omega} u_l^{\beta} \, \td x \leq  q \Cl{cwk}(q) \left( \int_{\Omega} \uaq_l \, dx +|\Omega|\right),
\end{align}
where 
\begin{align*}
\Cr{cwk}(q,l) :=  \frac{q-1}{D_1} \left(\|\nabla \cdot \D_l\|_{\infty}^2 + \|\D_l\|_{\infty}^2 \Cr{cboundgh}^2\right) + L_{\mu} H + \mu(0).
\end{align*}
\noindent
Adding $q\Cr{cwk}(q,l) \|u_l\|_{L^q}^q$ on both sides of \eqref{abschup2} and using Young's inequality one more time, we obtain
\begin{align}
&\frac{d}{dt}\int_{\Omega} u_l^q \, \td x + q\Cr{cwk}(q,l) \|u_l\|_{L^q}^q + 2\frac{q-1}{q}D_1\int_{\Omega} |\nabla u_l^{\frac{q}{2}}|^2 \, \td x + q\delta \eta \int_{\Omega} \uaq_l \, \td x \int_{\Omega} \ub_l \, \td x \nonumber\\
\leq& 2q\Cr{cwk}(q,l) \left( \int_{\Omega} \uaq_l +|\Omega|\right).\label{abschup3}
\end{align}
\noindent
Similarly to Step 1 in the proof of Theorem 1.1 in \cite{LCS} it follows that
\begin{align*}
2q\Cr{cwk}(q, l) \io u_l^{q-1+\alpha} \leq 2\frac{q-1}{q} D_1 \io  |\nabla u_l^{\frac{q}{2}}|^2 \, dx + q\delta \eta \int_{\Omega}  \uaq_l  \, dx \int_{\Omega}  \ub_l  \, dx + 2q\Cr{cwk}(q,l) \Cl{cwk2}(q, l),
\end{align*}
where 
\begin{align*}
\Cr{cwk2}(q,l):=& \Cl{c2}(q)^{\frac{s(q+\alpha-1+\beta)}{qs-(q+\alpha-1+\beta)}}+\left(2\left(\frac{2\Cl{c1}^2q^2 \Cr{cwk}(q,l)}{(q-1)D_1} \right)^{\frac{q+\alpha-1+\beta}{(q+\alpha-1+\beta)(1-\frac{1}{s})-2(\alpha-1)}} \right.\\
&\left.+ \Cr{c2}(q) ^{\frac{s(q+\alpha-1+\beta)}{qs-(q+\alpha-1+\beta)}} \right)^{\frac{(s-2)(q+\beta)-(s+2)(\alpha-1)}{s(\beta+1-\alpha)-2\beta}}\left(\frac{2\Cr{cwk}(q,l)}{\delta \eta}\right)^{\frac{sq-2(q+\alpha-1)}{s(\beta+1-\alpha)-2\beta}}
\end{align*}
with
\begin{align*}
\Cr{c2} (q) &:= 4C_S|\Omega|^{\frac{1}{2}-\frac{q}{q+\alpha-1+\beta}}\\
\Cr{c1} &:= 2C_S^2(1+2C_P).
\end{align*}
Here, $C_S >0$ denotes the Sobolev embedding constant from page 8 in \cite{LCS}, $C_P>0$ the constant from the Poincar\'e inequality, and 
\begin{align} \label{defs}
s \begin{cases}
=\infty, & n=1\\
\in (\frac{2(q+\alpha-1+\beta)}{q-\alpha+1+\beta},\infty), &n=2\\
=\frac{2n}{n-2}, & n>2.
\end{cases}
\end{align}
\noindent
Hence, for $t \in (0,\Tm)$ we conclude that 
\begin{align} \label{abschup4}
\frac{d}{dt}\|u_l\|_{L^q(\Omega)}^q + q \Cr{cwk}(q,l)\|u\|_{L^q(\Omega)}^q \leq 2q\Cr{cwk}(q,l)(\Cr{cwk2}(q,l) +|\Omega|).
\end{align}
\noindent
Hence, for $t \in (0,\Tm)$ and $q \geq \max\{\beta +\alpha-1,1\}$ we obtain from \eqref{abschup4} the upper bound
\begin{align} \label{ublq}
\|u_l(\cdot,t)\|_{L^q(\Omega)} \leq \sqrt[q]{2\Cr{cwk2}(q, l) + |\Omega| (2 +\|u_0\|_{L^{\infty}(\Omega)}^q)}.
\end{align}
\end{proof}

\begin{Remark}
As in \cite{LCS} we cannot directly conclude from Lemma \ref{ulq} that $u_{l}$ is bounded on $\Omega \times (0,\Tm)$ as 
\begin{align*}
\lim\limits_{q \to \infty} \sqrt[q]{2\Cr{cwk2}(q, l) + |\Omega| (2 +\|u_0\|_{L^{\infty}(\Omega)}^q)}= \infty.
\end{align*}
\end{Remark}

\begin{Theorem} \label{globex}
For all $l\in\N$ there is a unique bounded and nonnegative solution $(u_l,h_l)$ of \eqref{IBVPL} consisting of nonnegative functions 
\begin{align*} 
u_l,h_l \in C(\oa \times [0, \infty)) \cap C^{2,1}( \oa \times (0, \infty))
\end{align*}
and $h_l < H$. Thereby, there is some $\Cl{ubu} >0$ that does not depend on $l$ s.t. $u_l \leq \Cr{ubu}$.
 
\noindent
Moreover, if $\Omega$ is convex for some 'small' enough choice of parameters specified in \eqref{bed1} and \eqref{bed2} below, then for any $K>1$ 
\begin{align}\label{boundu}
\|u_l\|_{L^{\infty}(\Omega\times(0,\infty))}\leq K \max\left\{1,\|u_0\|_{L^{\infty}(\Omega)},\frac{2}{\delta \eta}\Cl{lc2}^{\frac{s-2}{s(\beta+1-\alpha)-2\beta}}\right\},
\end{align}
where
\begin{align*}
\Cr{lc2} := 2\sqrt{2} \max\{|\Omega|^{\frac{1}{s}-1},|\Omega|^{-\frac{3}{2}}\text{diam}(\Omega)^{1+\frac{2n}{s}}G(s,n)\}
\end{align*}
for some function $G$ to be adequately chosen and we set $s$ from \eqref{defs} equal to $\infty$ for $n=2$.
\end{Theorem}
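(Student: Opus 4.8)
The plan is to globalize the local classical solution from Theorem~\ref{exloc} by excluding finite-time blow-up via the extensibility criterion \eqref{limt} of Lemma~\ref{wsollem}. For each fixed $l$, Theorem~\ref{exloc} already provides a unique classical solution $(u_l,h_l)$ on a maximal interval $[0,\Tm)$ with $0\le u_l$, $0\le h_l<H$ and the regularity \eqref{ssol}; moreover $h_l$ is bounded by $H$ and, by Lemma~\ref{boundgh}, $\|\nabla h_l\|_{L^\infty(\Omega\times(0,\Tm))}\le\Cr{cboundgh}$, a value independent of both $t$ and $l$ (the $h$-equation does not involve $\D_l$). Hence the $h_l$-part of \eqref{limt} is already under control, and everything reduces to a bound on $\|u_l(\cdot,t)\|_{L^\infty(\Omega)}$ that stays finite as $t\nearrow\Tm$, which then forces $\Tm=\infty$.

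First I would observe that the finite-order estimates \eqref{ublq} of Lemma~\ref{ulq} are uniform in $l$: their constants depend on $l$ only through $\|\D_l\|_{L^\infty}$, $\|\nabla\cdot\D_l\|_{L^\infty}$ and the ellipticity bounds, all controlled uniformly since $\D_l\to\D$ in $C^1(\oa)^{n\times n}$ and the uniform parabolicity holds with $l$-independent $D_1,D_2$. As the Remark warns, one may not let $q\to\infty$ in \eqref{ublq}; instead, having a uniform-in-$t$ bound in every $L^q(\Omega)$ with $q<\infty$ at hand, I would upgrade to $L^\infty$ by a single Duhamel estimate. Writing $u_l(t)=e^{t\mathcal A_l}u_0+\int_0^t e^{(t-s)\mathcal A_l}\big[\nabla\cdot(\D_l u_l\nabla h_l)+\mu(h_l)\ua_l(1-J\ast\ub_l)\big]\,\td s$ with the sectorial myopic-diffusion generator $\mathcal A_l w:=\nabla\cdot(\D_l\nabla w+(\nabla\cdot\D_l)w)$ under the no-flux condition, I would use the smoothing estimates $\|e^{t\mathcal A_l}\|_{L^p\to L^\infty}\le C\,t^{-\frac{n}{2p}}$ and their gradient analogue. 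The negative nonlocal competition part has the favourable sign and is discarded; the growth part is dominated by $(L_\mu H+\mu(0))\ua_l$, which lies in $L^{q/\alpha}(\Omega)$ by Lemma~\ref{ulq}, while the drift is handled through $\|\nabla h_l\|_{L^\infty}\le\Cr{cboundgh}$ together with an $L^q$-bound on $u_l$. Choosing $q$ large renders the resulting time singularities integrable in $s$, so the Duhamel integrals are finite uniformly in $t\in(0,\Tm)$, yielding $\|u_l\|_{L^\infty(\Omega\times(0,\Tm))}\le\Cr{ubu}$ with $\Cr{ubu}$ independent of $l$ by the $l$-uniformity just noted.

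With this bound, \eqref{limt} together with $h_l<H$ and Lemma~\ref{boundgh} rules out $\Tm<\infty$, so $\Tm=\infty$. Uniqueness and the regularity class $C(\oa\times[0,\infty))\cap C^{2,1}(\oa\times(0,\infty))$ then follow by applying the local arguments of Theorem~\ref{exloc} on each interval $(0,T_1)$ and using that all bounds are now time-uniform, while nonnegativity of $u_l$ and $0\le h_l<H$ persist by the comparison principle.

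For the sharpened estimate \eqref{boundu} under convexity and the smallness conditions \eqref{bed1}--\eqref{bed2}, I would instead run a Moser-type iteration directly on the differential inequality \eqref{abschup3}, now keeping the nonlocal competition term $q\delta\eta\,\big(\io\uaq_l\big)\big(\io\ub_l\big)$ rather than discarding it. Testing with $u_l^{q-1}$ and integrating the myopic-diffusion term by parts produces boundary contributions, and convexity of $\Omega$ is used to secure the sign of the relevant boundary integrals, so that the dissipation $\io|\nabla u_l^{q/2}|^2$ is retained cleanly with explicit constants. Balancing the growth against the competition absorption---with the factor $\delta\eta$ arising from $\mu\ge\delta$ and $J\ge\eta$ and the Sobolev/Gagliardo--Nirenberg constants producing $\Cr{lc2}$---gives a recursion whose $q$-th roots remain bounded, and the smallness in \eqref{bed1}--\eqref{bed2} is exactly what pushes the limiting constant into the closed form on the right-hand side of \eqref{boundu} as $q\to\infty$. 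The main obstacle throughout is the very step flagged by the Remark: extracting an $L^\infty$-bound uniform in both $t$ and $l$ from the $q$-dependent estimates, which hinges on exploiting the good sign of the nonlocal term and the subcriticality \eqref{cond:alphabeta}, and in the convex case additionally on the convexity-controlled boundary terms and a careful tracking of the explicit constants through the $q\to\infty$ limit.
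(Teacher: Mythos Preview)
Your Duhamel route does give global existence, but it does \emph{not} deliver the time-uniform bound $\|u_l\|_{L^\infty(\Omega\times(0,\infty))}\le\Cr{ubu}$ that the theorem asserts. Once you discard the nonlocal competition term for its sign, the remaining source $\mu(h_l)\ua_l$ is merely bounded in $L^{q/\alpha}(\Omega)$ uniformly in $s$, and the myopic Neumann semigroup has no spectral gap on the whole space (constants, or near-constants, are not damped). Hence $\int_0^t\|e^{(t-s)\mathcal A_l}f(s)\|_{L^\infty}\,\td s\le C\int_0^t(t-s)^{-n\alpha/(2q)}\,\td s$ grows like $t^{1-n\alpha/(2q)}$; the singularity at $s=t$ is integrable, but the integral over $(0,t)$ is not uniformly bounded in $t$. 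So your sentence ``the Duhamel integrals are finite uniformly in $t\in(0,\Tm)$, yielding $\|u_l\|_{L^\infty(\Omega\times(0,\Tm))}\le\Cr{ubu}$'' is the gap: after you conclude $\Tm=\infty$, the same estimate only gives a bound growing in $t$. The paper avoids this by running a Moser iteration already for the general (non-convex) $L^\infty$ bound: it works with the differential inequality \eqref{abschuqk}, which carries the linear damping term $+q_k\Cr{cwk}(q_k,l)\,y_k$ on the left (coming from \eqref{abschup3} after adding $q\Cr{cwk}\|u_l\|_{L^q}^q$ to both sides). That damping is precisely what makes the ODE comparison $y_k'+c_ky_k\le C_k\max\{1,y_{k-1}^2\}$ produce a time-uniform $y_k$, and then the $k\to\infty$ limit yields \eqref{ubinfty}. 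Your approach throws away exactly the structure that replaces the missing spectral gap.

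A second, smaller point: your description of the convex case misidentifies the role of convexity. There are no boundary integrals to control when testing with $u_l^{q-1}$; the no-flux condition on $\D_l\nabla u_l+(\nabla\cdot\D_l)u_l+\D_lu_l\nabla h_l$ kills them outright, convex or not. In the paper, convexity enters only through the explicit Sobolev/Gagliardo--Nirenberg constants from \cite{MTSO}, which feed into $\Cr{c2}(q)$ and hence into the limit $\Cr{lc2}$. The smallness conditions \eqref{bed1}--\eqref{bed2} are then used, following Step~3 of \cite{LCS}, to make the Moser recursion constants close enough to~$1$ that the iterated product converges to the clean closed form \eqref{boundu}.
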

\begin{proof}
Let $l\in\N$. We proceed with a Moser iteration as in Step 2 in Theorem 1.1. in \cite{LCS}. 

\noindent
Set $q_k := 2^k + a$ with $a:= \frac{2(s-1)(\alpha-1)}{s-2}$ for $k \in \N$ and $s$ as in \eqref{defs}. 
Analogously to \cite{LCS} we obtain for $t \in (0,\Tm)$ the estimate
\begin{align} \label{abschuqk}
\frac{d}{dt} \|u_l\|_{L^{q_k}}^{q_k} +q_k \Cr{cwk}(q_k, l) \|u_l\|_{L^{q_k}}^{q_k} \leq 2 q_k\Cr{cwk}(q_k, l)  \Cl{cwk3}(q_k) \max\left\{1,\|u_l\|_{L^{q_{k-1}}}^{2q_{k-1}} \right\},
\end{align}
where
\begin{align*}
\Cr{cwk3}(q_k) :&= |\Omega| + 2\left(\frac{2\Cr{c1}^2q_k^2 \Cr{cwk}(q_k)}{(q_k-1)D_1}\right)^{\frac{s}{s-2}} + 2\max\{\Cr{c2}(q_k),1\}^{\alpha+1}.
\end{align*}
As $\D_l$ tends to $\D$ in $C^1(\bar{\Omega})^{n\times n}$, there is $\Cl{bd}>0$ s.t. $\|\D_l\|_{C^1(\bar{\Omega})^{n\times n}} \leq \Cr{bd}$ for all $l \in \N)$.
We can further estimate that 
\begin{align*}
\Cr{cwk3}(q_k) \leq 2^{2k\frac{s}{s-2}}\Cl{a0}
\end{align*}
for
\begin{align*}
\Cr{a0} :=& |\Omega| + \max\{\Cr{c2}{(q_k)},1\}^{\alpha+1}\\ 
&+ 2\left(2\frac{\Cr{c1}^2}{D_1}(1+a)\left( \frac{1+a}{D_1} \left(n^2 \Cr{bd}^2 + \Cr{bd}^2\Cr{cboundgh}^2 \right) +L_{\mu}H+\mu(0) \right)\right)^{\frac{s}{s-2}}.
\end{align*}
For $k\in\N$ and $t\in(0,\Tm)$ we set
\begin{align*}
y_k(t) := \|u_l(\cdot,t)\|_{L^{q_k}}^{q_k}.
\end{align*}
Inserting this into \eqref{abschuqk} we obtain
\begin{align*}
y_k'(t) + q_k \Cr{cwk}(q_k, l)y_k(t) \leq 2q_k\Cr{cwk}(q_k, l) 2^{2k\frac{s}{s-2}}\Cr{a0}\max\left\{1, \left(\io u_l^{q_{k-1}} \right)^2 \right\}.
\end{align*}
\noindent
Moreover, we estimate 
that
\begin{align*}
\|u_0\|_{L^{q_k}(\Omega)}^{q_k} \leq \Cl{B}^{2^k}
\end{align*}
with
\begin{align} \label{defB}
\Cr{B}:=\max\{1,|\Omega|\}\max\{\|u_0\|_{L^{\infty}}^{\alpha},1 \}.
\end{align}

\noindent
Hence, from Lemma 2.1 in \cite{LCS} it follows that for $m \geq 1$ and $t \in (0,\Tm)$ it holds that
\begin{align*}
\io u_l^{q_k} \, dx \leq (4\Cr{a0})^{2^{k-m+1}}2^{\frac{2s}{s-2}\left(2(2^{k-m}-1)+m2^{k-m+1} -k\right)} \max\left\{\sup_{t \geq 0} \left(\io u_l^{q_{m}} \right)^{2^{k-m+1}} , \Cr{B}^{2^k},1 \right\}.
\end{align*}
\noindent
Consequently, for $t \in (0, \Tm)$ and $m \geq 1$ it holds that
\begin{align} \label{ubinfty}
\|u_l\|_{L^{\infty}(\Omega)} =& \lim\limits_{k \to \infty}\|u_l\|_{L^{q_k}(\Omega)}
\leq (4\Cr{a0})^{2^{-m+1}}2^{\frac{2s(1+m)}{(s-1)^{m-1}}} \max\left\{\sup_{t \geq 0} \left(\io u_l^{q_{m-1}} \right)^{2^{-m+1}} , \Cr{B},1 \right\} =: \Cl{cubuinf}(m, l).
\end{align}
Due to \eqref{ublq} and 
\begin{align*}
\Cr{cwk}(q,l) \leq \frac{q-1}{D_1} (n^2\Cr{bd}^2 + \Cr{bd}^2 \Cr{cboundgh}^2) + L_{\mu} H + \mu(0),
\end{align*}
there is $\Cr{ubu}(m)>0$ s.t. $\|u_l\|_{L^{\infty}(\Omega)} \leq \Cr{ubu}(m)$ for all $l \in \N$.\\[-2ex]

\noindent
Consequently, $u_l$ is bounded on $\oa \times [0,\Tm)$. Combining this with the boundedness of $h_l$, Lemma \ref{boundgh} and \eqref{limt} in Theorem \ref{exloc}, $\Tm = \infty$ follows.\\[-2ex]

\noindent
If $\Omega$ is convex we proceed as in Step 3 of \cite{LCS}.  First, we fix some $m$ and choose our parameters sufficiently 'small' such that it holds that
\begin{align}\label{bed1}
\frac{2\Cr{c1}}{D_1}  \left( \frac{q_{m-1}}{D_1}\left(\|\nabla \cdot \D_l\|_{\infty}^2 + \|\D_l\|_{\infty}^2 \Cr{cboundgh}^2\right)+L_{\mu}H+\mu(0)\right)&\leq  \frac{2\Cr{c1}}{D_1} \left( \frac{q_{m-1}}{D_1}\left( n^2\Cr{bd}^2 + \Cr{bd}^2 \Cr{cboundgh}^2\right)+L_{\mu}H+\mu(0)\right)\notag \\
& < \frac{1}{q_{m-1}^2}      
\end{align}
and
\begin{align} \label{bed2}
\Cr{cwk}(q_{m-1}) \leq { \frac{q_{m-1}-1}{D_1} \left(n^2\Cr{bd}^2  + \Cr{bd}^2  \Cr{cboundgh}^2\right) + L_{\mu} H + \mu(0)} \leq 1.
\end{align} 
This depends on our choice of $\mathbb{D}$, $\mu$, $g$, $\|\nabla h_0\|_{L^{ \infty}}$, and $D_H$. 
Consequently, we conclude as in \cite{LCS} that for any $K>1$ we find 'small' enough parameters (satisfying \eqref{bed1} and \eqref{bed2} for some large $m$) such that \eqref{boundu} holds,
where due to Theorems 2.1 and 3.1 from \cite{MTSO} we have 
\begin{align*}
\Cr{lc2} := \lim\limits_{m \to \infty} \Cr{c2}(q_{m-1}) = 2\sqrt{2} \max\{|\Omega|^{\frac{1}{s}-1},|\Omega|^{-\frac{3}{2}}\text{diam}(\Omega)^{1+\frac{2n}{s}}G(s,n)\}
\end{align*}
for 
$$G(s,n) := \frac{\pi^{\frac{s}{n}}\Gamma(\frac{s-2}{2s}n)}{n\Gamma(\frac{s-1}{s}n)}\left(\frac{\Gamma(n)}{\Gamma(\frac{n}{2})}\right)^{\frac{s-2}{s}}$$ 
and we set $s$ from \eqref{defs} equal to $\infty$ for $n=2$.
\end{proof}

\subsection{Global existence and boundedness in the original problem}
\begin{Theorem} \label{globexbound}
There is a unique bounded and nonnegative weak solution $(u,h)$ of \eqref{IBVP} s.t. for a.e. $T >0$ it holds that $u \in C(0,T;L^2(\Omega)) \cap L^2(0,T;H^1(\Omega))$ with $\partial_t u \in L^2(0,T;(H^1(\Omega))^*)$ and $h \in C(0,T; H^1(\Omega)) \cap W^{2,1}_{2}(\ot)$ and $u$ satisfies \eqref{wsu} and \eqref{uaw} (with $T$ instead of $t$) for all $\eta \in W^{1,1}_2(\ot)$ and $h$ satisfies \eqref{wsh} - \eqref{haw} a.e. in $\Omega \times (0,\infty)$. \\[-2ex]

\noindent
Moreover, it holds that $h \leq H$ and $\nabla h \in L^{\infty}(\Omega \times (0,\infty))$.
If $\Omega$ is convex then $u$ satisfies \eqref{boundu} for the parameter choice from Theorem \ref{globex}.
\end{Theorem}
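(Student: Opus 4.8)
The plan is to realize $(u,h)$ as the limit, along a subsequence, of the approximate solutions $(u_l,h_l)$ furnished by Theorem \ref{globex}, taking advantage of the fact that the bounds obtained there are uniform in $l$. Since $\D_l \to \D$ in $C^1(\oa)^{n\times n}$, we have $\|\D_l\|_{C^1(\oa)^{n\times n}} \le \Cr{bd}$ for all $l$, and Theorem \ref{globex} together with Lemma \ref{boundgh} supplies the $l$-independent bounds $0 \le u_l \le \Cr{ubu}$, $0 \le h_l < H$, and $\|\nabla h_l\|_{L^{\infty}(\ot)} \le \Cr{cboundgh}$. First I would fix $T>0$ and collect uniform-in-$l$ estimates on the cylinder $\ot$. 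Testing the first equation of \eqref{IBVPL} with $u_l$ (the $q=2$ instance of the computation in Lemma \ref{ulq}) and estimating the drift term as in \eqref{abschnhgn}, with the coefficients controlled by $\Cr{bd}$ and $\Cr{cboundgh}$, yields a bound on $\|\nabla u_l\|_{L^2(\ot)}$ that does not depend on $l$; testing the weak formulation \eqref{wsu} against arbitrary $\eta \in L^2(0,T;H^1(\Omega))$ then gives a uniform bound on $\partial_t u_l$ in $L^2(0,T;(H^1(\Omega))^*)$. Since the $h$-equation does not involve $\D_l$, the parabolic regularity already used in Lemma \ref{wsollem} provides uniform bounds on $\|h_l\|_{W^{2,1}_2(\ot)}$ and $\|\partial_t h_l\|_{L^2(\ot)}$.

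With these estimates in hand I would extract limits exactly as in the compactness step of Lemma \ref{wsollem}: by Banach--Alaoglu and the Lions--Aubin lemma there is a subsequence with $u_l \rightharpoonup u$ in $L^2(0,T;H^1(\Omega))$, $\partial_t u_l \rightharpoonup \partial_t u$ in $L^2(0,T;(H^1(\Omega))^*)$, and $u_l \to u$ strongly in $L^2(\ot)$ and a.e.; simultaneously $h_l \rightharpoonup h$ in $L^2(0,T;H^2(\Omega))$, $h_l \to h$ in $L^2(0,T;H^1(\Omega))$ and a.e., and $\partial_t h_l \rightharpoonup \partial_t h$ in $L^2(\ot)$. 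The uniform bound $u_l \le \Cr{ubu}$ passes to the pointwise limit, so $0 \le u \le \Cr{ubu}$, while $0 \le h \le H$ and $\nabla h \in L^{\infty}(\ot)$ follow from $0 \le h_l < H$, Lemma \ref{boundgh}, and weak-$*$ lower semicontinuity. The membership $u \in C(0,T;L^2(\Omega))$ is then automatic from $u \in L^2(0,T;H^1(\Omega))$ with $\partial_t u \in L^2(0,T;(H^1(\Omega))^*)$, and $h \in C(0,T;H^1(\Omega))$ from the embedding $W^{2,1}_2(\ot) \hookrightarrow C(0,T;H^1(\Omega))$.

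Next I would pass to the limit in \eqref{wsu} and \eqref{wsh}--\eqref{haw}. For the myopic-diffusion part I pair the strongly convergent coefficients $\D_l \to \D$ and $\nabla\cdot\D_l \to \nabla\cdot\D$ in $C(\oa)$ with, respectively, the weakly convergent $\nabla u_l$ and the strongly convergent $u_l$, so that $\D_l \nabla u_l \rightharpoonup \D\nabla u$ and $\nabla\cdot\D_l\, u_l \to \nabla\cdot\D\, u$ in $L^2(\ot)$. For the taxis term I write $\D_l u_l \nabla h_l - \D u\nabla h$ and split off $(u_l-u)\nabla h_l$ and $u(\nabla h_l-\nabla h)$, both of which vanish in $L^2(\ot)$ thanks to the strong convergence of $u_l$ and of $\nabla h_l$ and the uniform $L^{\infty}$ bounds, after which the uniform convergence of $\D_l$ completes the argument. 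The nonlocal source $\mu(h_l)\ua_l(1-J\ast\ub_l)$ converges by a.e. convergence and dominated convergence, the bound $u_l \le \Cr{ubu}$ furnishing the integrable majorant; and $h$ solves \eqref{wsh}--\eqref{haw} a.e.\ by the Lipschitz continuity of $g$ and a.e.\ convergence, as in Lemma \ref{wsollem}. This identifies $(u,h)$ as a weak solution of \eqref{IBVP} with the asserted regularity.

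Finally, uniqueness I would establish by repeating the energy argument of Step 2 of Theorem \ref{exloc}, now carried out entirely in the weak sense for two bounded weak solutions of \eqref{IBVP}: testing the difference of the $h$-equations with $h_1-h_2$ reproduces \eqref{normh1h2} with $\D$ in place of $\D_l$, and testing the difference of the $u$-equations (admissible since $u_1-u_2 \in L^2(0,T;H^1(\Omega))$ with dual-valued time derivative) with $u_1-u_2$, using the $L^{\infty}$ bounds to render all reaction terms Lipschitz on the relevant range, leads through Young's, H\"older's and the Gagliardo--Nirenberg inequalities to a Gronwall estimate forcing $u_1 \equiv u_2$ and $h_1 \equiv h_2$. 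In the convex case the bound \eqref{boundu} is uniform in $l$, since its constants depend on $\D_l$ only through $\Cr{bd}$ and $\Cr{cboundgh}$, hence it survives the pointwise limit. I anticipate the main obstacle to be precisely the limit passage in the cross-diffusion and taxis terms: one must correctly pair the merely weakly convergent $\nabla u_l$ against the strongly convergent coefficients $\D_l$ and $\nabla\cdot\D_l$, and, for the drift term, secure \emph{strong} $L^2$ convergence of $\nabla h_l$ rather than of $h_l$ alone; it is the uniform $W^{2,1}_2(\ot)$ bound on $h_l$ that makes the latter available via Lions--Aubin.
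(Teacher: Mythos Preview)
Your proposal is correct and follows essentially the same route as the paper: uniform-in-$l$ estimates from Theorem~\ref{globex} and Lemma~\ref{boundgh}, an $L^2$ gradient bound on $u_l$ via testing with $u_l$, a dual bound on $\partial_t u_l$, parabolic $W^{2,1}_2$ regularity for $h_l$, then Lions--Aubin/Banach--Alaoglu compactness and passage to the limit, with uniqueness recycled from Theorem~\ref{exloc}. If anything, you are more explicit than the paper about how to pair strong and weak convergences in the myopic-diffusion and taxis terms; the paper simply invokes dominated convergence and the uniform bounds at that step.
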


\begin{proof}
Let  $\varphi \in H^1(\Omega)$.
Obviously, for a.e. $T>0$ and each $l \in \N$ the function $u_l$ satisfies
\begin{align}
\int_{\Omega} \partial_tu_{l} \varphi \, \td x &= - \int_{\Omega} ( \D_l \nabla u_l + \nabla \cdot \D_l u_l + \D_l u_l\nabla h_l) \cdot \nabla \varphi \, \td x + \int_{\Omega} \mu (h_l) u_l^{\alpha} (1-J\ast u_l^\beta) \, \td x, \label{ulw}
\end{align}
as it is a classical solution. Due to Theorem IV.9.1 (and the remark at the end of that chapter) in \cite{LSU}, $h_l$ satisfies 
\begin{align} \label{chw212}
\|h_l\|_{W^{2,1}_2(\Omega\times (0,T))} \leq \Cl{hw212-b},
\end{align}
where $\Cr{hw212-b}>0$ is independent from $l$ due to the properties of $g$ and $h_l {\color{olive} <} H$ for all $l\in\N$.\\[-2ex]

\noindent
Setting $\varphi=u_l$ in \eqref{ulw} and using H\"older's and Young's inequalities, the facts that $\|\D_l\|_{C^1(\bar{\Omega})^{n\times n}} \leq \Cr{bd}$, and the uniform boundedness of $u_l$ from Lemma \ref{globex}, we can estimate that
\begin{align*}
\frac{1}{2}\frac{d}{dt}\|u_l\|_{L^2(\Omega)}^2 + D_1 \|\nabla u_l\|^2_{L^2(\Omega)} \leq  \Cr{bd} \|u_l\|_{L^2(\Omega)}(n+\Cl{boundgh)})\| \nabla u_l\|_{L^2(\Omega)} + \Cl{lt}.
\end{align*}
Consequently, from Gronwall's inequality follows
\begin{align*}
\|\nabla u_l\|_{L^2(\Omega\times (0,T))} \leq \Cl{boundnu}.
\end{align*}
Similarly it follows that
\begin{align*}
\|\partial_t u_l\|_{L^2(0,T;(H^1(\Omega))^*)} \leq \Cl{boundut}.
\end{align*}
Putting this together with the uniform boundedness of $u_l$ and \eqref{chw212}, the Lions-Aubin and Banach-Alaoglu theorems imply that there are $u \in C(0,T;L^2(\Omega)) \cap L^2(0,T;H^1(\Omega))$ with $\partial_t u \in L^2(0,T;(H^1(\Omega))^*)$ and $h \in C(0,T; H^1(\Omega)) \cap W^{2,1}_{2}(\ot)$ s.t. (after switching to a subsequence if necessary)
\begin{align}
u_l &\underset{l \rightarrow \infty}{\rightarrow} u &\text{ in } L^2(\Omega \times (0,T)) \text{ and pointwise a.e.}, \label{ulpw}\\
u_l &\underset{l \rightarrow \infty}{\rightharpoonup} u &\text{ in } L^2(0,T;H^1(\Omega)), \nonumber\\
(u_l)_t &\underset{l \rightarrow \infty}{\rightharpoonup} u_t &\text{ in } L^2(0,T;(H^1(\Omega))^*),\nonumber\\
h_l &\underset{l \rightarrow \infty}{\rightarrow} h &\text{ in } L^2(0,T;H^1(\Omega)) \text{ and pointwise a.e.},\nonumber\\
h_l &\underset{l \rightarrow \infty}{\rightharpoonup} h &\text{ in } L^2(0,T;H^2(\Omega)),\nonumber\\
(h_l)_t &\underset{l \rightarrow \infty}{\rightharpoonup} h_t &\text{ in } L^2(\Omega\times (0,T)).\nonumber
\end{align}

\noindent
From this, the dominated convergence theorem, the uniform boundedness of $u_l$, $h_l $ and $\nabla h_l$ from Lemma \ref{boundgh} and the Lipshitz-continuity of $\mu$ and $g$, it follows that $(u,h)$ solves \eqref{IBVP} in the required sense.\\[-2ex]

\noindent
The a.e. boundedness and nonnegativity of $u$ and $h$ follow from the pointwise convergence and the uniform boundedness and nonnegativity of $u_l$ and $h_l$. 
Uniqueness follows similarly to Theorem \ref{exloc}.
\end{proof}

 \section{Long time behavior}\label{sec:asymptotic}
We consider the long time behavior of our solution under the additional assumptions that we make from now on:
\begin{itemize}
	\item the domain $\Omega$ is convex,
	\item the parameters satisfy \eqref{bed1} and \eqref{bed2},
	\item $u_0 \not\equiv 0$,
	\item we extend $J$ by $0$ to $\R^n\setminus B_{\text{diam}(\Omega)}(0)$ and assume $\|J\|_{L^1(\R^n)} = 1$,
	\item there are $h^* \in [0,H]$ and constants $C_H>0$ and $C_U \geq 0$ s.t.
	\begin{equation}\label{assg}
		g(u,h)(h-h^*) \leq - C_H (h-h^*)^2 + C_U u^{\alpha-1} (u^{\beta}-1)^2
	\end{equation}
for $0\leq h\leq H$ and $0 < u \leq U$, where $U$ is some upper bound on $u$ (that exists and is independent from $l$, due to Theorem \ref{globex}),
	\item the parameters
	\begin{align*}
		C_B &:=\frac{1}{4D_1}\left(L_{\mu}H+\mu(0)\right)\left(\text{diam} (\Omega)\beta U^{\beta}\right)^2,\\
		C_A &:= \frac{C_U \Cr{bd}((\beta-1)U^{\beta} +1)}{4\delta \eta |\Omega|D_HD_1} -1- C_B,
	\end{align*}
where $\|\mathbb{D}\|_{L^{\infty}(\Omega)} \leq \Cr{bd}$ for all $l \in \N$ (see proof of Theorem \ref{globex}), satisfy $C_A^2>4C_B$, $C_A<0$, $C_B \in (0,1)$ and
	\begin{align*}
		C_B  < -\frac{C_A}{2} + \sqrt{\frac{C_A^2}{4}-C_B} .
	\end{align*}
\end{itemize}
Moreover, let
\begin{align*}
	M := \left\{\mathbb{D} \in \left(C^{2+\vartheta}(\bar{\Omega})\right)^{n\times n}: \, (\nabla \cdot \mathbb{D} ) \cdot \nu = 0 \text{ on } \partial \Omega, \, \nabla \cdot (\nabla \cdot \mathbb{D}) = 0 \text{ on } \Omega \right\}.
\end{align*}
\noindent
We assume that $\mathbb{D}$ is in the closure of $M$ in the $(C^1(\bar{\Omega}))^{n \times n}$-norm and that the sequence $(\mathbb{D}_l)_{l\in\N}$ from above is the sequence in $M$ that approaches $\mathbb{D}$.

\begin{Remark}
	The inequality \eqref{assg} implies that such $h^*$ is unique and $g(1,h^*) = 0$ holds.
\end{Remark}

\noindent
We proceed by combining the methods from \cite{LCS} and \cite{Kumar2021}.

\begin{Lemma} \label{limtl2}
	For all $l \in\N$ it holds that
	\begin{align}\label{convtinf}
		\lim_{t\to\infty} \| u_l( \cdot, t)^{\frac{\alpha-1}{2}}(u_l^{\beta}(\cdot, t) - 1) \|_{L^2(\Omega)} = \lim_{t\to\infty} \| h_l( \cdot, t) -h^* \|_{L^2(\Omega)} = 0.
	\end{align}
\end{Lemma}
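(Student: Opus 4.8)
\noindent
The plan is to build a single Lyapunov functional whose dissipation dominates exactly the two quantities in \eqref{convtinf}, and then to let $t\to\infty$ via a Barbalat-type argument. Since $u_0\not\equiv0$, the strong maximum principle gives $u_l(\cdot,t)>0$ on $\oa$ for every $t>0$, so the entropy density $F(u_l):=\frac{u_l^\beta}{\beta}-\ln u_l$ is well defined and integrable for $t>0$, and $F(u)\ge F(1)=\frac1\beta$. Hence, for a weight $A>0$ to be fixed later, the functional
\begin{align*}
\mathcal E_l(t):=A\io F(u_l)\,\td x+\tfrac12\io(h_l-h^*)^2\,\td x
\end{align*}
is bounded below, and the goal is to choose $A$ (and an auxiliary Young parameter) so that $\frac{\td}{\td t}\mathcal E_l\le-\kappa\big(\io u_l^{\alpha-1}(u_l^\beta-1)^2\,\td x+\|h_l-h^*\|_{L^2(\Omega)}^2\big)$ for some $\kappa>0$.

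\noindent
First I would estimate the acidity. Testing the $h$-equation with $h_l-h^*$, integrating by parts with $\nabla h_l\cdot\nu=0$, and using the structural hypothesis \eqref{assg} yields
\begin{align*}
\tfrac12\frac{\td}{\td t}\|h_l-h^*\|_{L^2(\Omega)}^2\le-D_H\|\nabla h_l\|_{L^2(\Omega)}^2-C_H\|h_l-h^*\|_{L^2(\Omega)}^2+C_U\io u_l^{\alpha-1}(u_l^\beta-1)^2\,\td x.
\end{align*}
Thus the only obstruction to decay of $h_l$ towards $h^*$ is the positive coupling $C_U\io u_l^{\alpha-1}(u_l^\beta-1)^2$, which has to be compensated by the entropy dissipation of $u_l$.

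\noindent
Next I would test the first equation with $F'(u_l)=u_l^{\beta-1}-u_l^{-1}$. Writing $1-J\ast u_l^\beta=-(u_l^\beta-1)+(u_l^\beta-J\ast u_l^\beta)$, the reaction term produces the good dissipation $-\io\mu(h_l)u_l^{\alpha-1}(u_l^\beta-1)^2\le-\delta\io u_l^{\alpha-1}(u_l^\beta-1)^2$ plus a nonlocal remainder $\io\mu(h_l)u_l^{\alpha-1}(u_l^\beta-1)(u_l^\beta-J\ast u_l^\beta)$; using $\|J\|_{L^1}=1$ and $u_l^\beta-J\ast u_l^\beta=\int_\Omega J(x-y)(u_l^\beta(x)-u_l^\beta(y))\,\td y$, the convolution estimate $\|u_l^\beta-J\ast u_l^\beta\|_{L^2}\le\text{diam}(\Omega)\|\nabla u_l^\beta\|_{L^2}$ and $|\nabla u_l^\beta|^2\le\beta^2U^{2\beta}F''(u_l)|\nabla u_l|^2$, so that $\|u_l^\beta-J\ast u_l^\beta\|_{L^2}^2\le(\text{diam}(\Omega)\beta U^\beta)^2\io F''(u_l)|\nabla u_l|^2$, this remainder is absorbed into the gradient dissipation at the cost of the factor $\frac{1}{4D_1}(L_\mu H+\mu(0))(\text{diam}(\Omega)\beta U^\beta)^2$ — which is precisely $C_B$. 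The second-order part contributes the dissipative $-\io F''(u_l)\nabla u_l\cdot\D_l\nabla u_l\le-D_1\io F''(u_l)|\nabla u_l|^2$, whereas the drift-correction term $-\io F''(u_l)u_l\,\nabla u_l\cdot(\nabla\cdot\D_l)\,\td x$ is a perfect divergence and vanishes thanks to the defining properties $(\nabla\cdot\D)\cdot\nu=0$ and $\nabla\cdot(\nabla\cdot\D)=0$ of $M$ (the role of the hypothesis $\D\in\overline M$). Finally the pH-taxis cross term is split by Young's inequality into a piece absorbed by $\io F''(u_l)|\nabla u_l|^2$ and a remainder bounded by $\io\big((\beta-1)u_l^\beta+1\big)|\D_l\nabla h_l|^2$, using $F''(u)u^2=(\beta-1)u^\beta+1$; the factor $(\beta-1)U^\beta+1$ is the one visible in $C_A$, and this remainder must be absorbed by the term $-D_H\|\nabla h_l\|_{L^2}^2$ from the acidity estimate.

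\noindent
Adding $A$ times the $u$-estimate to the $h$-estimate, the algebraic conditions imposed on $C_A$ and $C_B$ — namely $C_B\in(0,1)$, $C_A<0$, $C_A^2>4C_B$, and $C_B<-\frac{C_A}{2}+\sqrt{\frac{C_A^2}{4}-C_B}$ — are exactly what permits a choice of $A>0$ and of the Young parameters for which the coefficients of $\io u_l^{\alpha-1}(u_l^\beta-1)^2$, of $\io F''(u_l)|\nabla u_l|^2$, of $\|\nabla h_l\|_{L^2}^2$ and of $\|h_l-h^*\|_{L^2}^2$ are all nonpositive, with strictly negative margin on the first and the last. Integrating on $(t_0,\infty)$ and using that $\mathcal E_l$ is bounded below then gives $\int_{t_0}^\infty\big(\io u_l^{\alpha-1}(u_l^\beta-1)^2+\|h_l-h^*\|_{L^2(\Omega)}^2\big)\,\td t<\infty$. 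The uniform-in-time bounds $u_l\le U$, $h_l<H$ and $\|\nabla h_l\|_{L^\infty}\le C$ from Theorems \ref{exloc}, \ref{globex} and Lemma \ref{boundgh}, combined with the classical regularity \eqref{ssol}, make $t\mapsto\io u_l^{\alpha-1}(u_l^\beta-1)^2$ and $t\mapsto\|h_l-h^*\|_{L^2(\Omega)}^2$ uniformly continuous, so Barbalat's lemma forces them to vanish as $t\to\infty$, which is \eqref{convtinf}. The main obstacle is the third step: absorbing the nonlocal competition term and the repellent pH-taxis cross term into the available reaction and gradient dissipation with constants sharp enough to meet the conditions on $C_A$ and $C_B$, i.e.\ verifying that $\mathcal E_l$ is a genuine Lyapunov functional; a secondary subtlety is the finiteness of the entropy via strict positivity of $u_l$ and the uniform continuity required for Barbalat's lemma.
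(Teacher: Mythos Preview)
Your proposal is correct and follows essentially the same route as the paper: the same entropy $a(u_l^\beta)=\frac{u_l^\beta}{\beta}-\ln u_l-\frac1\beta$ (your $F$ up to a constant) tested with $u_l^{\beta-1}-u_l^{-1}$, the same elimination of the drift-correction term via the defining properties of $M$, the same splitting of the nonlocal reaction with a Young parameter $\varepsilon$ and the convexity/mean-value bound $(u_l^\beta(x)-u_l^\beta(y))^2\le(\text{diam}(\Omega)\beta U^\beta)^2|\nabla u_l|^2/u_l^2$, and the same weighted combination with the $h$-estimate so that the algebraic constraints on $C_A,C_B$ furnish an admissible $\varepsilon$. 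The only noticeable difference is the endgame: the paper simply writes ``Gronwall's inequality implies \eqref{convtinf}'' after obtaining $\frac{\td}{\td t}\mathcal E_l+\kappa D_l\le0$, whereas you spell out the more accurate mechanism---integrability of $D_l$ on $(0,\infty)$ plus uniform-in-time H\"older/continuity bounds to invoke Barbalat's lemma---which is what the paper's phrase must stand for, since the dissipation $D_l$ does not dominate the entropy part of $\mathcal E_l$.
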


\begin{proof}
Let $l \in \N$. We conclude from the strong maximum principle and the assumption $u_0 \not\equiv 0$ that $u_l>0$ holds in $\Omega\times (0,\infty)$. \\[-2ex]

\noindent	
As in \cite{LCS} we define $a(s) := \frac{s}{\beta} -\frac{1}{\beta} \ln(s) - \frac{1}{\beta}$ with $a(s) \geq 0$ for $s \in (0,\infty)$. By multiplying the equation for $u_l$ in \eqref{IBVPL} by $u_l^{\beta - 1} - u_l^{-1}$, integrating over $\Omega$ and using partial integration and our additional assumptions on $\D_l$, we obtain
\begin{align*}
	&\frac{d}{dt} \io a(u_l^{\beta}) \, \td x\\
	=& \io \nabla \cdot \left(\mathbb{D}_l \nabla u_l + \nabla \cdot \mathbb{D}_lu_l +\mathbb{D}_l u_l \nabla h_l \right)\left(u_l^{\beta -1} - u_l^{-1}\right) \, \td x
	+ \io \mu(h_l)\ua_l (1- J \ast \ub_l)(u_l^{\beta-1} - u_l^{-1}) \, \td x\\
	=& - \io \left(\D_l \nabla u_l + \nabla \cdot \mathbb{D}_l u_l +\D_l u_l \nabla h_l \right) \left((\beta -1)\ub_l + 1\right) \cdot \frac{\nabla u_l}{u_l^2} \, \td x
	+ \io \mu(h_l)\ua_l (1- J \ast \ub_l)(u_l^{\beta-1} - u_l^{-1}) \, \td x,
\end{align*}
where using again partial integration
\begin{align*}
	&\io  \nabla \cdot \mathbb{D}_l u_l ((\beta-1)\ub_l + 1)\cdot\frac{\nabla u_l}{u_l^2} \, \td x \\
	=& \io \nabla \cdot \mathbb{D}_l  ((\beta-1)u_l^{\beta-1} + u_l^{-1}) \cdot \nabla u_l \, \td x\\
	=& \io \nabla \cdot \mathbb{D}_l \cdot \nabla\left(\frac{\beta - 1}{\beta}\ub_l +\ln(u_l) \right) \, \td x\\
	=& - \io \nabla \cdot (\nabla \cdot \mathbb{D}_l) \left(\frac{\beta-1}{\beta} \ub_l + \ln(u_l)\right) \, \td x + \int_{\partial \Omega} \left(\frac{\beta-1}{\beta}\ub_l +\ln(u_l)\right) (\nabla \cdot\mathbb{D}_l) \cdot \nu\td \sigma = 0.
\end{align*}

\noindent
Hence, we can estimate using the positivity of $u_l$ that
\begin{align} \label{estau2}
	&\frac{d}{dt} \io a(\ub_l) \, \td x + \io \left(\frac{\nabla u_l}{u_l}\right)^T \D_l \frac{\nabla u_l}{u_l} \,\td x \nonumber \\
	\leq&\io \left((\beta-1)\ub_l + 1\right) \left|\left(\nabla h_l\right)^T \D_l \frac{\nabla u_l}{u_l} \right| \, \td x \notag \\
	&+ \io \mu(h_l) u^{\alpha-1}_l \left(1-J\ast \ub_l\right)\left(\ub_l-1\right) \, \td x.
\end{align}
\noindent
Then, we extend $u_l$ by 0 to $\R^n \setminus \Omega$ and proceed similarly to the proof of Proposition 3.1 in \cite{LCS} to obtain using H\"older's inequality that
\begin{align} \label{estau1}
	&\io \mu(h_l(x))u^{\alpha-1}_l(x)(1-J\ast \ub_l(x)) (\ub_l(x) -1) \, \td x \nonumber \\
\leq&\io \mu(h_l(x)) u_l^{\alpha-1}(x)  \left(\int_{\R^n} J(y) \, \td y - \io J(x-y) \ub_l(y) \, \td y  \right) \left(\ub_l(x) -1 \right) \, \td x \nonumber \\
\leq&\io \mu(h_l(x)) u_l^{\alpha-1}(x)  \left(\io J(x-y) (1- \ub_l(y)) \, \td y \right)\left(\ub_l(x) -1\right) \, \td x \nonumber \\
\leq&-\io \mu(h_l(x)) u_l^{\alpha-1}(x)  \io J(x-y) (\ub_l(x) - 1)^2 \, \td y \, \td x \nonumber \\ 
	&+ \io \mu(h_l(x)) u_l^{\alpha-1}(x) \io J(x-y) (\ub_l(x) - \ub_l(y))(\ub_l(x)-1) \, \td y \, \td x \nonumber \\
\leq&-(1-\varepsilon)\io \io \mu(h_l(x)) u_l^{\alpha-1}(x)  J(x-y) (\ub_l(x) - 1)^2 \, \td y \, \td x \nonumber \\
	&+ \frac{1}{4\varepsilon} \left( L_{\mu} H + \mu(0)\right) \io \io u_l^{\alpha-1}(x) J(x-y) (\ub_l(x) - \ub_l(y))^2 \, \td y \, \td x
\end{align}
for 
\begin{align*}
\varepsilon \in \left\{\max \left\{ - \frac{C_A}{2} - \sqrt{\frac{C_A^2}{4} - C_B} ;C_B \right\},\min \left\{ - \frac{C_A}{2} + \sqrt{\frac{C_A^2}{4} - C_B} ;1 \right\}\right\},
\end{align*}
where the interval on the right hand side is nonempty due to our assumptions on $C_A$ and $C_B$.\\[-2ex]

\noindent
Moreover, due to the convexity of $\Omega$ and using the uniform boundedness of $(u_l)_l$ by $U$ we can estimate 
\begin{align*}
\io \io u_l^{\alpha-1}(x) J(x-y) (\ub_l(x) - \ub_l(y))^2 \, \td y \, \td x 
\leq \left(|\text{diam}(\Omega)|\beta\right)^2 {U^{2\beta+\alpha-1}} \io \frac{|\nabla u_l|^2}{u_l^2} \, \td x.	
\end{align*}
\noindent
Now, inserting this in \eqref{estau1}, using our assumptions on $J$ and $\mu$, and the uniform boundedness of $(u_l)_l$, we conclude
\begin{align*}
	&\io \mu(h_l(x))u^{\alpha-1}_l(x)(1-J\ast \ub_l(x)) (\ub_l(x) -1) \, \td x  \\
	\leq& \frac{1}{\varepsilon} D_1 C_B \io \frac{|\nabla u_l|^2}{u_l^2} \, \td x - (1-\varepsilon)\delta \eta |\Omega| \io u^{\alpha-1}_l(\ub_l - 1)^2 \, \td x
\end{align*}
Inserting this into \eqref{estau2}, it follows that
\begin{align}\label{estau3}
	&\frac{d}{dt} \io a(u_l^{\beta}) \, \td x + D_1 \frac{\varepsilon - C_B}{\varepsilon}  \io \frac{|\nabla u_l|^2}{u_l^2} \, \td x + (1-\varepsilon)\delta \eta |\Omega| \io u^{\alpha-1}_l(\ub_l - 1)^2 \, \td x \nonumber\\
\leq&\io \left((\beta-1)\ub_l + 1\right) \left|\left(\nabla h_l\right)^T \D_l \frac{\nabla u_l}{u_l} \right| \, \td x \notag \\ 
\leq&D_1\frac{\varepsilon - C_B}{\varepsilon} \io \frac{|\nabla u_l|^2}{u_l^2} \, \td x +\frac{\varepsilon\left(\Cr{bd}((\beta - 1) U^{\beta} + 1) \right)^2}{4D_1\left(\varepsilon -  C_B \right)} \io |\nabla h_l|^2\, \td x.
\end{align}
\noindent
Multiplying the equation for $h_l$ by $h_l - h^*$ and using \eqref{assg}, we obtain
\begin{align*}
	\frac{1}{2} \frac{d}{dt} \io (h_l - h^*)^2 \, \td x + D_H \io |\nabla h_l|^2 \, \td x \leq -C_H \io (h-h^*)^2 \, \td x + C_U \io u^{\alpha-1}_l(\ub_l - 1)^2 \, \td x.
\end{align*}
Further, we multiply this by $\Cl{ceqh}:= \frac{\varepsilon\left(\Cr{bd}((\beta - 1) U^{\beta} + 1) \right)^2}{4D_1D_H\left(\varepsilon -  C_B \right)}$ and add it to \eqref{estau3} to obtain
\begin{align}
	&\frac{d}{dt} \left(\io a(\ub_l) \, \td x + \frac{1}{2} \Cr{ceqh} \io (h_l-h^*)^2 \, \td x \right)\\
	+& C_H \Cr{ceqh} \io (h_l - h^*)^2 \, \td x
	+ \left((1- \varepsilon) \delta \eta |\Omega| - C_U \Cr{ceqh} \right) \io u^{\alpha-1}_l(\ub_l - 1)^2 \, \td x \leq 0.
\end{align}
Due to our assumptions on $C_A, C_B$ and our choice of $\varepsilon$ it holds that $(1- \varepsilon) \delta \eta |\Omega| - C_U \Cr{ceqh} > 0$. Now, Gronwall's inequality implies \eqref{convtinf}.
\end{proof}

\noindent
Now we can conclude uniform convergence:
\begin{Theorem} \label{limtlinf}
	For all $l \in\N$ it holds that
	\begin{align}\label{alimtlinf}
		\lim_{t\to\infty} \| u_l( \cdot, t) - c \|_{L^{\infty}(\Omega)} = \lim_{t\to\infty} \| h_l( \cdot, t) -h^* \|_{L^{\infty}(\Omega)} = 0,
	\end{align}
	where $c \in \{0,1\}$ for $\alpha >1$ and $c=1$ for $\alpha=1$.
\end{Theorem}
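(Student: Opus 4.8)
The plan is to upgrade the $L^2$-convergence of Lemma \ref{limtl2} to uniform convergence by a compactness-plus-$\omega$-limit-set argument, with $l\in\N$ fixed throughout. First I would establish uniform-in-time H\"older regularity: since $u_l$ is globally bounded (Theorem \ref{globex}), $0\leq h_l<H$ with $\nabla h_l$ globally bounded (Lemma \ref{boundgh}), and the coefficients in \eqref{IBVPL} are autonomous, the nonlocal source $\mu(h_l)u_l^{\alpha}(1-J\ast u_l^{\beta})$ is a uniformly bounded forcing term (using the $L^\infty$-bound on $u_l$ and $J\in L^1$ to control $J\ast u_l^{\beta}$). Applying the interior and boundary parabolic estimates (Theorem 4 in \cite{DB}, resp.\ the theory in \cite{LSU}) on each sliding time slab $\bar\Omega\times[t,t+1]$ yields
$$\|u_l\|_{C^{\theta,\theta/2}(\bar\Omega\times[t,t+1])}+\|h_l\|_{C^{\theta,\theta/2}(\bar\Omega\times[t,t+1])}\leq C$$
with $\theta\in(0,1)$ and $C$ independent of $t\geq 1$. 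By Arzel\`a--Ascoli the trajectories $t\mapsto u_l(\cdot,t)$ and $t\mapsto h_l(\cdot,t)$ are then relatively compact in $C(\oa)$, so their $\omega$-limit sets in $C(\oa)$ are nonempty, compact and connected.

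Next I would characterise these limit sets. Let $t_k\to\infty$ with $u_l(\cdot,t_k)\to w$ and $h_l(\cdot,t_k)\to\zeta$ in $C(\oa)$. Uniform convergence propagates through the continuous maps, so $u_l^{(\alpha-1)/2}(u_l^{\beta}-1)(\cdot,t_k)\to w^{(\alpha-1)/2}(w^{\beta}-1)$ and $h_l(\cdot,t_k)-h^*\to\zeta-h^*$ in $L^2(\Omega)$; comparing with the vanishing limits in \eqref{convtinf} forces $\zeta\equiv h^*$ and $w^{(\alpha-1)/2}(w^{\beta}-1)\equiv 0$ on $\oa$. For $\alpha=1$ this reads $w^{\beta}=1$, i.e.\ $w\equiv 1$. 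For $\alpha>1$ it gives $w(x)\in\{0,1\}$ at every $x$; since $w$ is continuous and $\Omega$ is connected, $w$ must be constant, hence $w\equiv 0$ or $w\equiv 1$.

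It then remains to show the full limit exists, i.e.\ to exclude oscillation between the two admissible constants when $\alpha>1$. Here I would invoke that the $\omega$-limit set of $u_l$ is connected in $C(\oa)$ while, by the previous step, it is contained in the set $\{0,1\}$ of two isolated constant functions; a connected subset of two isolated points is a single point, so the limit set equals $\{c\}$ for one fixed $c\in\{0,1\}$, which gives $\|u_l(\cdot,t)-c\|_{L^{\infty}(\Omega)}\to 0$. The identical argument makes the $h$-limit set the singleton $\{h^*\}$, yielding $\|h_l(\cdot,t)-h^*\|_{L^{\infty}(\Omega)}\to 0$. One may additionally observe that the monotone, bounded-below energy $\int_\Omega a(u_l^{\beta})+\tfrac12 C_{eqh}\int_\Omega(h_l-h^*)^2$ built in Lemma \ref{limtl2} stays finite along the flow, whereas $w\equiv 0$ would force $\int_\Omega a(u_l^{\beta})\to\infty$; this refines which constant is actually attained.

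The main obstacle I anticipate is securing the \emph{time-uniformity} of the H\"older/compactness estimate: the parabolic regularity theorems must be applied on the sliding windows $[t,t+1]$ with constants that do not deteriorate as $t\to\infty$, which hinges essentially on the coefficients being autonomous together with the $t$-independent bounds on $u_l$, $h_l$ and $\nabla h_l$ supplied by Theorem \ref{globex} and Lemma \ref{boundgh}, and on controlling the nonlocal convolution $J\ast u_l^{\beta}$ uniformly as a bounded source. Once this is in place, passing the pointwise algebraic constraint $w^{(\alpha-1)/2}(w^{\beta}-1)=0$ through the uniform limit and invoking the connectedness dichotomy are comparatively routine.
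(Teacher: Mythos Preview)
Your proposal is correct and follows essentially the same route as the paper: both rest on uniform-in-time H\"older bounds to upgrade the $L^2$ convergence of Lemma~\ref{limtl2} to $L^\infty$, and then on a continuity/connectedness argument to single out one constant limit; the paper merely compresses these two steps by citing Lemma~3.10 in \cite{TW2} for the first and invoking ``uniform continuity of $u_l$'' for the second. Your closing energy remark---that the monotone functional $\int_\Omega a(u_l^\beta)$ would diverge if $u_l\to 0$ uniformly---actually yields more than the theorem claims, since the paper is content with $c\in\{0,1\}$ when $\alpha>1$.
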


\begin{proof}
As in Lemma 3.10 in \cite{TW2} we can conclude from Lemma \ref{limtl2} that
\begin{align*}
	\lim_{t\to\infty} \| u_l(t, \cdot)^{\frac{\alpha-1}{2}}(u_l^{\beta}(t,\cdot) - 1) \|_{L^{\infty}(\Omega)} = \lim_{t\to\infty} \| h_l(t, \cdot) -h^* \|_{L^{\infty}(\Omega)} = 0.
\end{align*}
Combining this with the uniform continuity of $u_l$ for all $l \in \N$ we get the convergence stated in \eqref{alimtlinf}.
\end{proof}


\noindent
Now we can conclude pointwise convergence of $u$ and $h$:
\begin{Theorem}
	For a.e. $x \in \Omega$ it holds that $\lim_{t\to\infty} u(x,t) = c$ and $\lim_{t\to\infty} h(x,t) = h^*$, where $c \in \{0,1\}$ for $\alpha >1$ and $c=1$ for $\alpha=1$.
\end{Theorem}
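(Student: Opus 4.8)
The plan is to combine the long-time convergence of the approximate solutions (Theorem \ref{limtlinf}) with their convergence to $(u,h)$ (Theorem \ref{globexbound}). The subtlety is that this amounts to interchanging the limits $l\to\infty$ and $t\to\infty$, which do not commute in general. Since the dissipation estimate of Lemma \ref{limtl2} rests on the structural conditions defining $M$ (namely $(\nabla\cdot\D_l)\cdot\nu=0$ and $\nabla\cdot(\nabla\cdot\D_l)=0$), which the limiting tensor $\D$ need not satisfy, I cannot reproduce that computation directly for $(u,h)$. Instead I would transfer the \emph{integrated} dissipation to the limit and then convert it into genuine decay using the regularity of $(u,h)$ itself.

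First I would pass to the limit in the dissipation. From the proof of Theorem \ref{globexbound}, after a diagonal extraction over $T\in\N$, I have $u_l\to u$ and $h_l\to h$ a.e.\ on $\Omega\times(0,\infty)$ together with $l$-uniform $L^\infty$ bounds. Fix $t_0>0$. Writing the $u_l$-equation in divergence form with bounded coefficients, the parabolic Harnack inequality (whose constant is uniform in $l$ thanks to the uniform parabolicity and the $l$-uniform bounds on $\D_l$, $\nabla h_l$ and $u_l$) gives $\inf_\Omega u_l(\cdot,t_0)\ge\varepsilon(t_0)>0$ uniformly in $l$; hence the Lyapunov functional $E_l(t_0)=\io a(\ub_l(t_0))+\frac{c_0}{2}\io(h_l(t_0)-h^*)^2$ (with $c_0$ the positive constant from that proof) is bounded uniformly in $l$. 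Integrating the differential inequality established at the end of the proof of Lemma \ref{limtl2} from $t_0$ onwards therefore yields an $l$-uniform bound $\int_{t_0}^\infty\io[(h_l-h^*)^2+u_l^{\alpha-1}(\ub_l-1)^2]\,\td x\,\td t\le C$. Letting $l\to\infty$ by dominated convergence on each slab $\Omega\times(t_0,T)$ (the integrands being controlled by the uniform $L^\infty$ bounds) and then $T\to\infty$, I obtain the two finite time-integrals $\int_0^\infty\io(h-h^*)^2\,\td x\,\td t<\infty$ and $\int_0^\infty\io u^{\alpha-1}(\ub-1)^2\,\td x\,\td t<\infty$ for the limit solution.

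Next I would upgrade these to pointwise convergence. Rewriting the equations of \eqref{IBVP} in divergence form with bounded coefficients, De Giorgi--Nash--Moser and interior parabolic Schauder estimates give $u,h\in C^{\gamma,\gamma/2}(\oa\times[\tau,\infty))$ for each $\tau>0$, with a modulus of continuity uniform in $t$. Consequently the maps $t\mapsto\|h(\cdot,t)-h^*\|_{L^2(\Omega)}^2$ and $t\mapsto\|u^{\frac{\alpha-1}{2}}(\ub-1)(\cdot,t)\|_{L^2(\Omega)}^2$ are uniformly continuous, so the finite time-integrals above force both to tend to $0$ (a Barbalat-type argument). As in Lemma 3.10 of \cite{TW2}, the uniform H\"older bound upgrades this $L^2$-decay to $L^\infty$-decay; then, exactly as in the proof of Theorem \ref{limtlinf}, uniform continuity of $u$ in space together with connectedness of $\Omega$ shows that the only attainable limits of $u^{\frac{\alpha-1}{2}}(\ub-1)$ correspond to $u\in\{0,1\}$ (for $\alpha>1$) or $u=1$ (for $\alpha=1$), so that $\|u(\cdot,t)-c\|_{L^\infty(\Omega)}\to0$ with $c$ as stated and $\|h(\cdot,t)-h^*\|_{L^\infty(\Omega)}\to0$. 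In particular both limits hold pointwise for a.e.\ $x\in\Omega$, which is the assertion.

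The hard part is the second stage. Because $\D$ may fail to lie in $M$, the clean Lyapunov structure is lost at the level of $(u,h)$, so the decay must be produced indirectly: the integrated control of the first step only says the dissipation is integrable in time, and turning this into an actual limit requires uniform-in-time regularity of the limit solution---hence the parabolic bootstrap and the uniform continuity of the two $L^2$-functionals. Securing a time-uniform H\"older modulus (rather than one deteriorating as $t\to\infty$) and the associated Barbalat step is the technical core; granting these, the $L^2\to L^\infty$ passage and the single-constant dichotomy follow as in \cite{TW2,LCS}.
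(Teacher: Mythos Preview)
Your proposal is sound as a sketch and takes a genuinely different route from the paper. The paper's proof is extremely short: it simply combines the pointwise a.e.\ convergence $u_l\to u$, $h_l\to h$ on $\Omega\times(0,\infty)$ (from the construction in Theorem~\ref{globexbound}) with the uniform-in-$x$ long-time limits $u_l(\cdot,t)\to c_l\in\{0,1\}$ and $h_l(\cdot,t)\to h^*$ from Theorem~\ref{limtlinf}, argues that the Cauchy property of $(u_l(x,t))_l$ forces all sufficiently large $l$ to share the same constant $c$, and concludes via the triangle inequality $|u(x,t)-c|\le |u(x,t)-u_l(x,t)|+|u_l(x,t)-c|$, letting $t\to\infty$ and $l\to\infty$. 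No additional regularity for the limit pair $(u,h)$ is invoked.

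By contrast, you do not attempt to interchange the two limits directly. Instead you pass the \emph{integrated} dissipation inequality of Lemma~\ref{limtl2} to the limit (using an $l$-uniform Harnack lower bound to control the Lyapunov functional at a fixed positive time), obtain $\int_0^\infty\|h-h^*\|_{L^2}^2\,\td t<\infty$ and $\int_0^\infty\|u^{\frac{\alpha-1}{2}}(u^\beta-1)\|_{L^2}^2\,\td t<\infty$ for the weak limit itself, then use uniform-in-time parabolic H\"older regularity (De Giorgi--Nash--Moser applied to the divergence-form equation for $u$ with bounded coefficients) together with a Barbalat argument to upgrade the finite integrals to genuine $L^2$-decay, and finally to $L^\infty$-decay as in \cite{TW2}. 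Your path is substantially longer and needs the extra regularity input you flag as ``the hard part'', but it is more self-contained: it avoids the delicate interchange-of-limits step (the convergence $u_l\to u$ being only a.e.\ in space-time with no uniformity in $t$), and it actually delivers the stronger conclusion $\|u(\cdot,t)-c\|_{L^\infty}+\|h(\cdot,t)-h^*\|_{L^\infty}\to0$, from which the stated pointwise a.e.\ convergence is immediate.
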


\begin{proof}
	For $h$ and in the case $\alpha =1$ convergence follows directly from Theorem \ref{limtlinf}. We consider the case $\alpha>1$. From \eqref{ulpw} we know that (after switching to a subsequence if necessary) $u_l$ converges to $u$ pointwise a.e. in $\Omega \times (0,\infty)$. Hence, for all $(x,t)$ the sequence $(u_l(x,t))_l$ is Cauchy and we can conclude that for large enough $l$ the sequence $u_l(t)$ converges to the same $c \in \{0,1\}$. Hence, for a.e. $(x,t)$ and large enough $l$ it holds that
	\begin{align*}
		|u(x,t) - c| \leq |u(x,t) - u_l(x,t)| + |u_l(x,t)  -c| \rightarrow 0
	\end{align*}
	for $t\to\infty$ and $l\to\infty$ due to \eqref{ulpw} and Theorem \ref{limtlinf}.
\end{proof}

\section{Pattern formation: a 1D study}\label{sec:pattern}
We want to investigate pattern formation in our model (see \cite{M}). For this aim we adapt some of the assumptions on our functions and parameters:
\begin{itemize}
\item $J \in L^1(\R)$, $J(x) = J(-x)$ for $x \in \R$ and $\int_{\R} J(x) \, dx=1$ and $J \circledast u (x) := \int_{\R} J(x-y)u(y) \, dy$, whereas we drop the condition that $0<\eta<J$;
\item $d \in \R$ constant;
\item there is exactly one $h^* >0$ with $g(1,h^*)= 0$, moreover, for this $h^*$ it holds that $\mu(h^*) >0$, $\partial_u g(1,h^*) \geq 0$ and $\partial_h g(1,h^*)<0$. This means that when the cancer cells are at their carrying capacity (corresponding to an acidity level $h^*$), the production of protons is increasing with the cell mass and decreasing with enhancing proton concentration. Indeed, crowded tumor cells are highly hypoxic, and a too acidic environment leads to quiescence or necrosis, thus reducing proton expression. Moreover, we assume that $\mu'(h^*)<0$, thus the growth rate is decreasing with the proton concentration in the neighborhood of the critical value $h^*$.
\item w.l.o.g. we consider $\Omega = [-a,a]$ for $a \in \R$.
\end{itemize}
\noindent
Hence, we consider the model
\begin{align} \label{IBVPA}
\begin{cases}
u_t = du_{xx} + d(uh_x)_x + \mu(h) \ua (1- J \circledast \ub), & x\in \Omega, \, t > 0,\\
h_t = D_H h_{xx} + g(u,h), & x \in \Omega, \, t > 0,\\
u_x = h_x = 0, & x \in \partial \Omega, \, t>0,\\
u(x,0) = u_0(x), \, h(x,0) = h_0(x), & x \in \Omega.
\end{cases}
\end{align}

\subsection{Stability in the local model without diffusion and taxis}

We start by establishing the equilibria of the non-spatial local model that corresponds to \eqref{IBVPA}, i.e. 
\begin{align} \label{locmod}
\begin{cases}
\partial_t u &= \mu(h) \ua (1- \ub), \\
\partial_t h &= g(u,h).
\end{cases}
\end{align}
The biologically more interesting one is given by $(u^*,h^*) = (1,h^*)$, where $h^*$ is the unique solution of $g(1,h) = 0$.  The corresponding characteristic equation of the Jacobian in $(1,h^*)$ is given by
\begin{align*}
\lambda^2 + (\beta \mu(h^*)-\partial_hg(1,h^*))\lambda - \beta \mu(h^*) \partial_h g(1,h^*) = 0.
\end{align*}
The corresponding eigenvalues are 
\begin{align*}
\lambda_{1} = -\beta \mu(h^*) \text{ and } \lambda_2 = \partial_h g(1,h^*)
\end{align*}
and both have negative real parts due to the assumption $\partial_h g (1,h^*) < 0$. Hence, the steady state $(1,h^*)$ is stable in this case.

\subsection{Stability in the local model with diffusion and taxis}
We continue by adding again the diffusion and taxis term to the local model \eqref{locmod}. Adapting the ansatz from \cite{PGM} we consider perturbations of $(1,h^*)$ of the form $u = 1 + \varepsilon \bar{u}(k)$ and $h = h^* + \varepsilon \bar{h}(k)$, where $\bar{u}(k) = \tilde{u} e^{\lambda(k)t}\cos(kx)$ and $\bar{h}(k) = \tilde{h} e^{\lambda(k)t}\cos(kx)$ for $\tilde{u}, \tilde{h} \in \R$ and wavenumber $k \in \N$ and $|\varepsilon| << 1$. Here, $\lambda(k)$ denotes some eigenvalue of the corresponding characteristic equation. As in \cite{N} we use the fact that $\frac{e^{ikx}+e^{-ikx}}{2} = \cos(kx)$ to ensure that our perturbations are real.

\noindent
Inserting these $u$ and $h$ into our model and linearizing about the steady state $(1,h^*)$, we obtain
\begin{align} \label{Linwdt}
\begin{cases}
\lambda(k)\bar{u} &= -k^2d\bar{u} - dk^2\bar{h} - \beta \mu (h^*) \bar{u},\\
\lambda(k)\bar{h} &= -D_Hk^2 \bar{h} + \partial_u g(1,h^*) \bar{u} + \partial_h g(1,h^*)\bar{h}.
\end{cases}
\end{align} 
The corresponding eigenvalues are given by
\begin{align*}
\lambda_{1,2}(k) = \frac{\tr(J_{u,h}(k)) \pm \sqrt{\tr(J_{u,h}(k))^2 -4 \det(J_{u,h}(k))}}{2},
\end{align*}
where we denote by $J_{u,h}$ the Jacobian of the right hand side in system \eqref{Linwdt} at $(1,h*)$ and its determinant and trace are, respectively, given by
\begin{align*}
\tr(J_{u,h}(k))=& -(d+D_H)k^2-\beta\mu(h^*) + \partial_h g(1,h^*) <0,\\
\det(J_{u,h}(k))=& dD_H k^4 + (d (\partial_u g(1,h^*) - \partial_h g(1,h^*)) + \beta \mu(h^*)D_H)k^2  \\
&-\beta \mu(h^*)\partial_hg(1,h^*)>0.
\end{align*}
Hence, the equilibrium $(1,h^*)$ is stable. The local model does not lead to any Turing type patterns.

\subsection{Stability in the nonlocal model}
We consider $u$ and $h$ as in the previous section and linearize the convolution term about $(1,h^*)$ similarly to \cite{PGM}. Hence, inserting $u$ in the convolution term and using the symmetry of $J$, we compute that
\begin{align*}
J\ast \ub  \approx 1+ \varepsilon \beta  \bar{u} (2\pi)^{\frac{1}{2}}\hat{J}(k).
\end{align*}
Here, $\hat{J}$ denotes the Fourier transform of $J$. Hence, linearizing system \eqref{IBVPA}, we obtain
\begin{align} \label{LinIBVPA}
\begin{cases}
\lambda(k)\bar{u} &= -dk^2\bar{u} - dk^2\bar{h} - \beta \mu (h^*)(2\pi)^{\frac{1}{2}}\hat{J}(k) \bar{u},\\
\lambda(k)\bar{h} &= -D_Hk^2 \bar{h} + \partial_u g(1,h^*) \bar{u} + \partial_h g(1,h^*)\bar{h}.
\end{cases}
\end{align}
The corresponding eigenvalues are as above given by
\begin{align*}
\lambda_{1,2}(k) = \frac{\tr(J_{u,h}(k)) \pm \sqrt{\tr(J_{u,h}(k))^2 -4 \det(J_{u,h}(k))}}{2},
\end{align*}
where we denote by $J_{u,h}$ the Jacobian of the right hand side in \eqref{LinIBVPA} at $(1,h^*)$ and its trace and determinant are given by
\begin{align*}
\tr(J_{u,h})(k)=& -(d+D_H)k^2-\beta\mu(h^*)(2\pi)^{\frac{1}{2}}\hat{J}(k) + \partial_h g(1,h^*),\\
\det(J_{u,h})(k)=& dD_H k^4 + (d (\partial_u g(1,h^*) - \partial_h g(1,h^*)) + \beta \mu(h^*)(2\pi)^{\frac{1}{2}}\hat{J}(k)D_H)k^2\\  
&-\beta \mu(h^*)(2\pi)^{\frac{1}{2}}\hat{J}(k)\partial_hg(1,h^*).
\end{align*}
The sign of the real part of the eigenvalues is ambiguous here and depends especially on the sign of $\hat{J}(k)$, which depends on $k$. 
As above, we have stability here if 
\begin{align} \label{stabcond}
\tr{J_{u,h}(k)}<0 \text{ and } \det{J_{u,h}(k)} >0
\end{align}
for all $k = \frac{\pi}{a} z$, where $z \in \mathbb{Z}$. We make this restriction due to our boundary condition $u_x = h_x =0$.\\[-2ex] 

\noindent
Now, we are looking for a critical $k_c$ (that is not necessarily of the form $\frac{\pi}{a} z$) depending on our choice of parameters, where we  distinguish as in \cite{N} the occurrence of Turing instabilities in the case $\text{Im}(\lambda(k_c)) = 0$ for some arbitrary critical $k_c$, Hopf instabilities in the case $\text{Im}(\lambda(0)) \neq 0$, and wave instabilities in the case $\text{Im}(\lambda(k_c))\neq 0$ for some critical $k_c\neq 0$. If $\hat{J}$ is symmetric it suffices to consider only positive $k_c$.\\[-2ex]

\noindent
A Turing bifurcation can occur if we find $k_c$ such that
\begin{align*}
\det(J_{u,h})(k_c)= 0 \text{ and } \tr(J_{u,h})(k_c)<0.
\end{align*}
Now, rewriting these conditions we conclude that the equality
\begin{align}\label{equality}
\hat{J}(k_c)= - d\frac{k_c^2}{\beta(2\pi)^{\frac{1}{2}}\mu(h^*)} \left(1 + \frac{\partial_u g(1,h^*)}{D_Hk_c^2-\partial_hg(1,h^*)}\right)
\end{align}
and the inequality
\begin{align}\label{inequality}
\frac{\partial_h g(1,h^*)-(d+D_H)k_c^2}{\beta \mu  (h^*) (2\pi)^{\frac{1}{2}}} < \hat{J}(k_c)
\end{align}
have to hold for one or several critical $k_c$ in a set $K_c$, whereas \eqref{stabcond} holds for all $k \notin K_c$ that are of the form $\frac{\pi}{n}z$. Such $k_c$ exist depending on the choice of parameters, on the functions $\mu$ and $g$, and especially on the sign of the Fourier transform of $J$. Moreover, due to our assumptions the terms on the right-hand side of \eqref{equality} and on the left-hand side of \eqref{inequality}  are negative and tend to $- \infty$ for $k\rightarrow \pm\infty$.
\noindent
On the other hand, a Hopf or a wave instability can occur if we find $k_c$ such that
\begin{align*}
\tr(J_{u,h})(k_c) = 0  \text{ and } \det{J_{u,h}}(k_c) >0,
\end{align*} 
whereas \eqref{stabcond} holds for all $k$ that do not satisfy this and are of the form $\frac{\pi}{n}z$. Hence, a Hopf instability occurs if
\begin{align*}
\frac{\partial_hg(1,h^*)}{\mu { (h^*)} \beta (2\pi)^{\frac{1}{2}}} = \hat{J}(0) \text{ and } \hat{J}(0) >0,
\end{align*}
whereas \eqref{stabcond} holds for all $k \neq 0$. On the other hand, a wave instability occurs if 
\begin{align*} 
\frac{\partial_h g(1,h^*) - (d+D_H)k_c^2}{\mu (h^*)\beta (2\pi)^{\frac{1}{2}}} = \hat{J}(k_c)
\end{align*}
and 
\begin{align*}
-d\frac{k_c^2}{\beta(2\pi)^{\frac{1}{2}}\mu(h^*)} \left(1 + \frac{\partial_u g(1,h^*)}{D_Hk_c^2-\partial_hg(1,h^*)}\right) < \hat{J}(k_c)
\end{align*}
holds for one or several $k_c \neq 0$, whereas \eqref{stabcond} holds for all other $k$ that are of the form $\frac{\pi}{n}z$ and do not satisfy the above equality and inequality.\\[-2ex]

\noindent
From the above considerations we conclude that the occurrence of a Turing, Hopf or wave instability depends on the concrete choice of $J$, as we need to find suitable $k$ of the form $\frac{\pi}{a} z$. If the Fourier transform $\hat{J}$ is nonnegative, no Turing patterns occur.

\begin{Remark}\label{rem:immer-instabil}
If there is a steady state of the form $(0,h^{**})$ for some $h^{**}>0$ and $\partial_h g(0,h^{**}) \leq 0$, then this equilibrium is stable in the case with diffusion, taxis and nonlocal term. If, on the other hand, $\partial_h g(0,h^{**})>0$, this steady state is unstable already in the case without diffusion and taxis. This case is, however, unrealistic for the biological problem investigated here. Indeed, the proton expression by hypoxic cells is much reduced and there must be at least some very weak acid buffering, lest all cells (and surrounding tissue) become apoptotic.\\[-2ex]

\noindent
Likewise, the steady state $(1,h^*)$ is unstable already in the case without diffusion and taxis if $\partial_h g(1,h^*) >0$. This situation may occur at least in a transient manner, e.g. when the cells can still extrude protons while their environment is quite acidic and if the cells are at their carrying capacity and the proton buffering is relatively low. That can lead, e.g., to a choice of the form $g(u,h)=u+uh-\gamma h^2$ with $\gamma \le 4/5$.
\end{Remark}
\noindent

\section{Numerical simulations}\label{sec:numerics}

In this section we perform numerical simulations of system \eqref{IBVPA}, in order to illustrate the solution behavior. The equations are discretized by using the algorithm in \cite{Nadin2011}; the motility terms were discretized with finite differences (centered for the diffusion, upwind for the drift). The initial conditions are as in \cite{LCS}:
\begin{align*}
u_0(x)=\left \{\begin{array}{cc}
e^{-(x-x_l)^2},&\text{for }x_l<x\le 0\\
e^{-x_l^2}(1-\frac{x}{x_r}),&\text{for }0<x\le x_r\\
\end{array}\right .,\qquad \text{with }x_l=-5,\ x_r=5.
\end{align*}
\noindent
Unless otherwise stated we take $g(u,h)=u(1-h)$, $\mu (h)=\mu /(1+h)$, with $\mu>0$ a constant and $d=1$.

\noindent
In a first test we took $\beta=1$, $\mu =1$, along with the logistic kernel $J(x)=1/(2+e^x+e^{-x})$ (see, e.g., \cite{Lee2009}) and the uniform kernel $J(x;\rho)=\frac{1}{2\rho}\mathds 1_{[-\rho,\rho]}$. The first two columns of Figure \ref{fig:1} show simulation results for $\alpha =2$, which is the 'limit value' in \eqref{cond:alphabeta}. 
The solution ceased (in finite time) to exist for sufficiently large $\alpha $ in each of these situations ($\alpha \sim 6.25$ and $\alpha \sim 8.2$, respectively), $u$ exhibiting strong aggregation near the initial bulk of cells, cf. last two columns in Figure \ref{fig:1}. 
This behavior was also observed for increasing values of $\mu$, with the difference of singularities already occuring for smaller $\alpha $ values.\\[-2ex] 

\begin{figure}[h!]
	\begin{minipage}[h]{.24\linewidth}
		\raisebox{1.2cm}
			{\includegraphics[width=1\linewidth, height = 2.95cm]{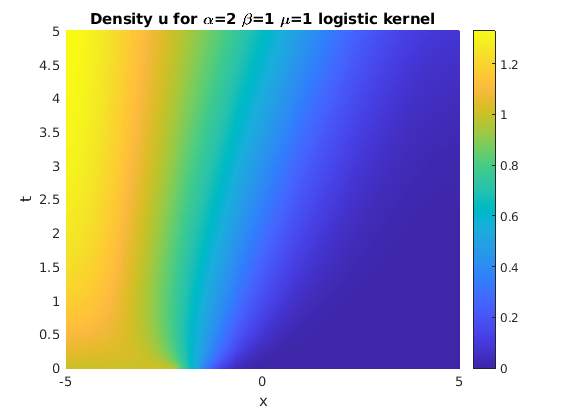}}\\[-4ex]
	\raisebox{2.3cm}
	{\includegraphics[width=1\linewidth, height = 2.95cm]{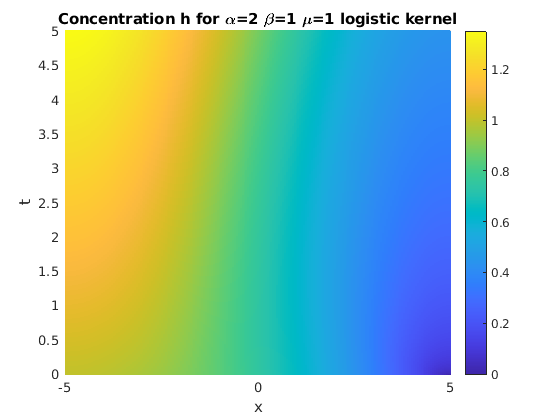}}
\end{minipage}%
	\hspace{0.01cm}
		\begin{minipage}[h]{.24\linewidth}
		\raisebox{1.2cm}
		{\includegraphics[width=1\linewidth, height = 2.95cm]{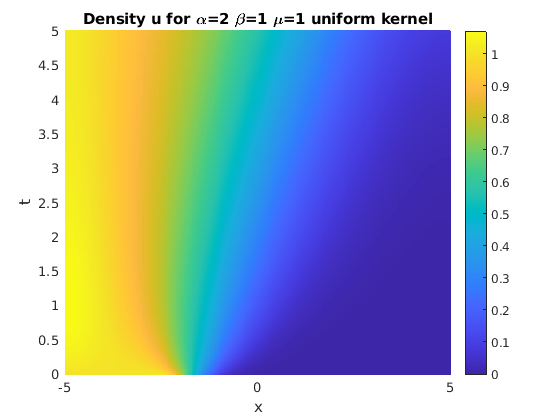}}\\[-4ex]
	\raisebox{2.3cm}
		{\includegraphics[width=1\linewidth, height = 2.95cm]{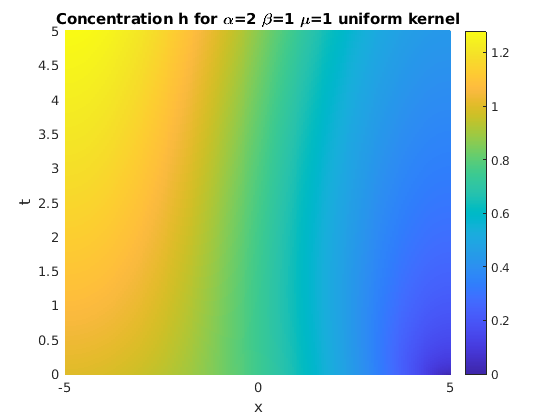}}
\end{minipage}%
	\begin{minipage}[h]{.24\linewidth}	
\raisebox{1.2cm}
		{\includegraphics[width=1\linewidth, height = 2.95cm]{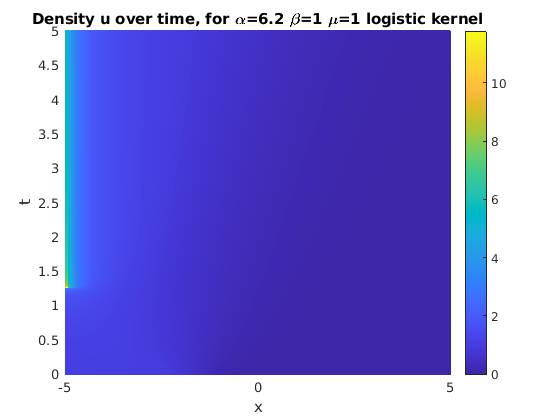}}\\[-4ex]
\raisebox{2.3cm}
{\includegraphics[width=1\linewidth, height = 2.95cm]{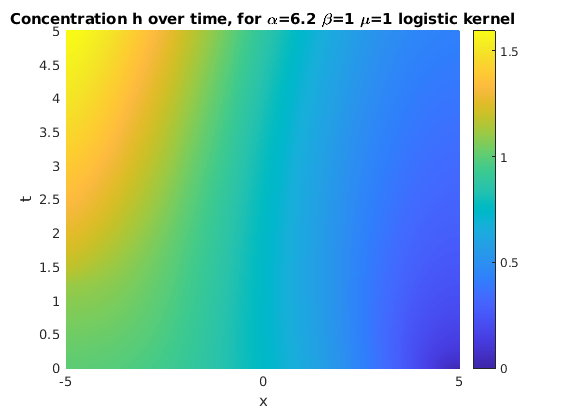}}		
\end{minipage}%
\hspace{0.01cm}		
			\begin{minipage}[h]{.24\linewidth}	
\raisebox{1.2cm}
	{\includegraphics[width=1\linewidth, height = 2.95cm]{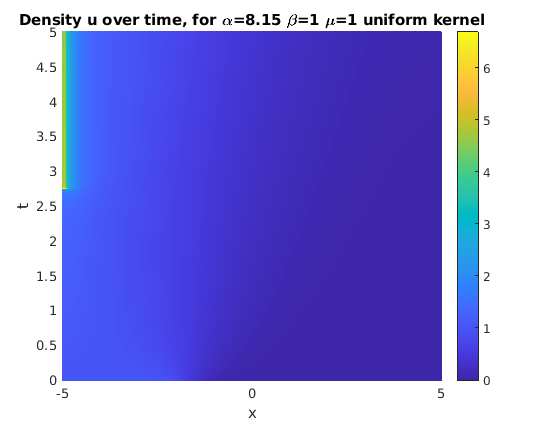}}\\[-4ex]
	\raisebox{2.3cm}
	{\includegraphics[width=1\linewidth, height = 2.95cm]{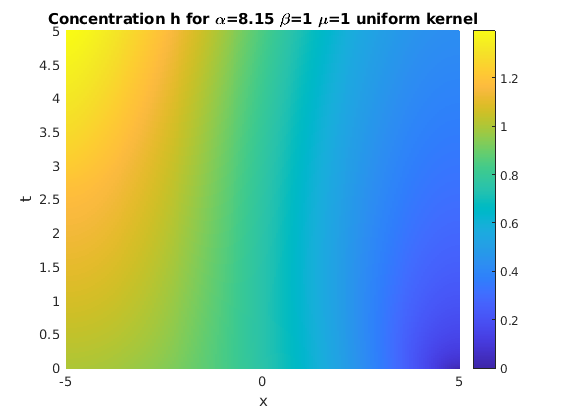}}
	\end{minipage}%
	\vspace*{-2cm}
	\caption{Simulation results for \eqref{IBVPA} with $\beta=\mu=1$. First two columns: $\alpha =2$, 3rd column: $\alpha =6.2$, last column: $\alpha =8.15$. Uniform kernel used with $\rho=1$.}\label{fig:1}
\end{figure}

\noindent
Increasing the values of $\mu$ and $\beta $ leads to patterns, the shape of which depends decisively on the interaction kernel $J$ and also on the values of $\alpha $ and $d$. Figure \ref{fig:2} shows 1D space-time patterns of the cell density $u$ for $\beta=20$, $\mu=100$, and several combinations of $\alpha$ and $J$. The results for the proton concentration $h$ are not shown, as there are only small quantitative differences between the respective cases. Figure \ref{fig:2} suggests that, irrespective of the chosen kernel\footnote{We performed simulations with several other kernels, including the so-called 'Mexican hat' (also known as Ricker wavelet, see e.g. \cite{Ei,Zaytseva2019} for its use in related, but different contexts), cosine, and Epanechnikov.}, higher cooperative intraspecific interactions (larger $\alpha$ values) or slower diffusion delay the invasion of cells in the whole region, leading instead to enhanced proliferation. On the long run the cells tend to fill the whole space and remain at their carrying capacity. This behavior endorses the results in Section \ref{sec:asymptotic} and is  particularly well visible for the logistic kernel, which satisfies all conditions in the proofs of the theoretical results of Sections \ref{sec:analysis} and \ref{sec:asymptotic}; the process is much slower when a uniform kernel is used, however it has eventually the same outcome. The last row in Figure \ref{fig:2} exhibits the situation of a cell diffusion which is much slower than that of protons. The effect is a delayed filling of the space with cells (and produced protons) and a later formation of the patterns observed in the upper rows. The asymptotic behavior is similar, only it takes longer for the solution to reach the respective states. \\[-2ex]

\begin{figure}[h!]
	\begin{minipage}[h]{.24\linewidth}
		\raisebox{1.2cm}
		{\includegraphics[width=1\linewidth, height =2.95cm]{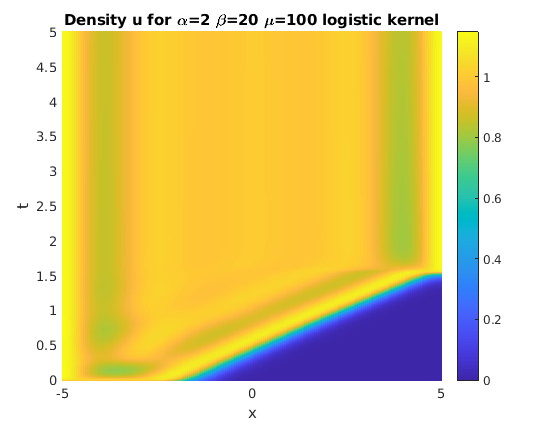}}\\[-4ex]
		\raisebox{2.3cm}
		{\includegraphics[width=1\linewidth, height = 2.95cm]{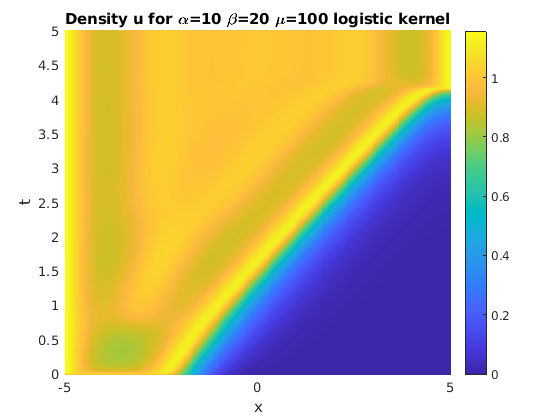}}\\[-10ex]
		\raisebox{2.3cm}
		{\includegraphics[width=1\linewidth, height = 2.95cm]{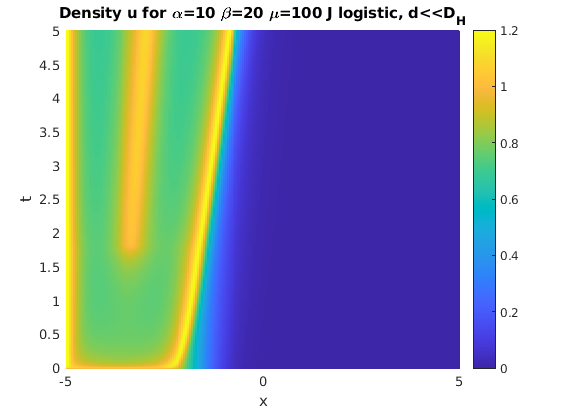}}
	\end{minipage}%
	\hspace{0.01cm}
	\begin{minipage}[h]{.24\linewidth}
		\raisebox{1.2cm}
		{\includegraphics[width=1\linewidth, height =2.95cm]{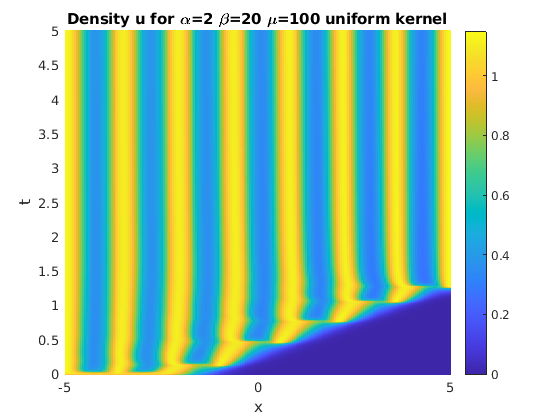}}\\[-4ex]
		\raisebox{2.3cm}
		{\includegraphics[width=1\linewidth, height = 2.95cm]{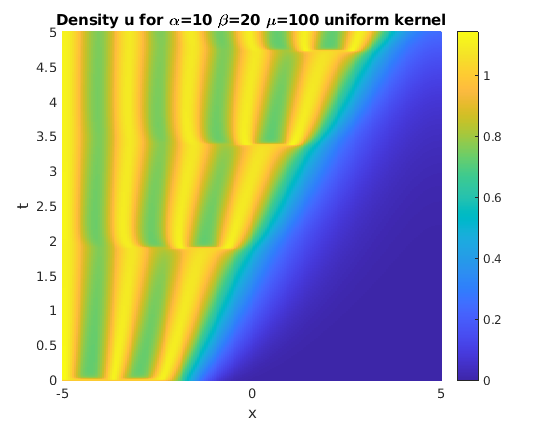}}\\[-10ex]
		\raisebox{2.3cm}
		{\includegraphics[width=1\linewidth, height = 2.95cm]{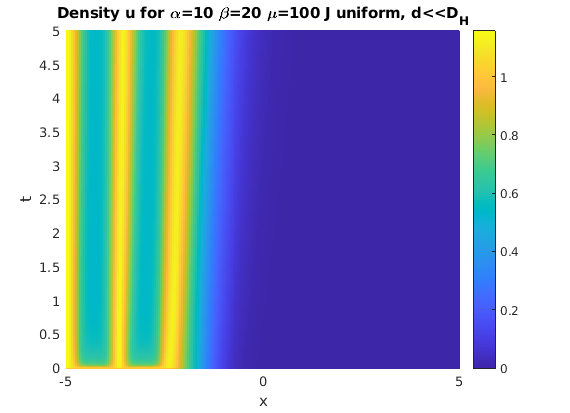}}
	\end{minipage}%
	\begin{minipage}[h]{.24\linewidth}	
		\raisebox{1.2cm}
		{\includegraphics[width=1\linewidth, height =2.95cm]{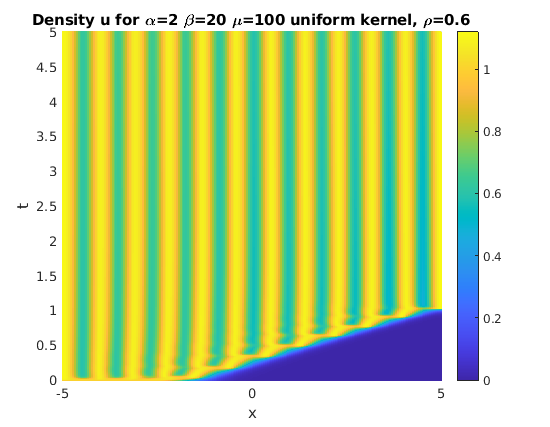}}\\[-4ex]
	\raisebox{2.3cm}
	{\includegraphics[width=1\linewidth, height = 2.95cm]{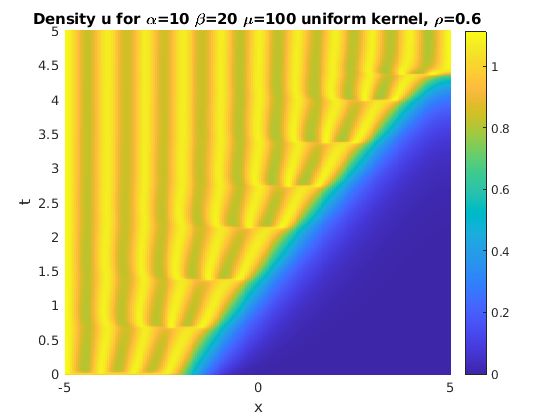}}	\\[-10ex]	
			\raisebox{2.3cm}
		{\includegraphics[width=1\linewidth, height = 2.95cm]{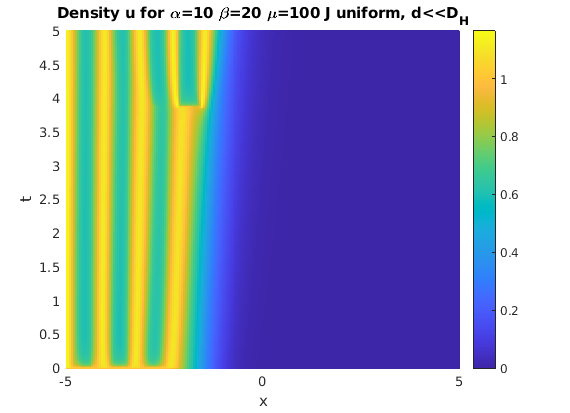}}		
	\end{minipage}
	\hspace{0.01cm}		
	\begin{minipage}[h]{.24\linewidth}	
		\raisebox{1.2cm}
		{\includegraphics[width=1\linewidth, height =2.95cm]{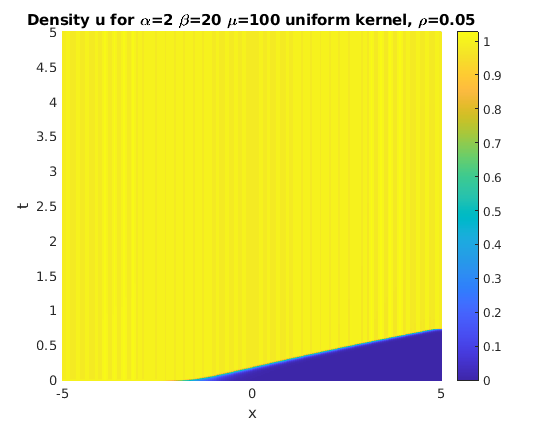}}\\[-4ex]
		\raisebox{2.3cm}
		{\includegraphics[width=1\linewidth, height = 2.95cm]{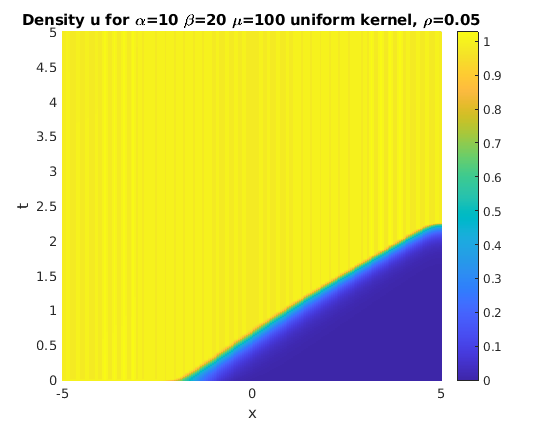}}\\[-10ex]
		\raisebox{2.3cm}
		{\includegraphics[width=1\linewidth, height = 2.95cm]{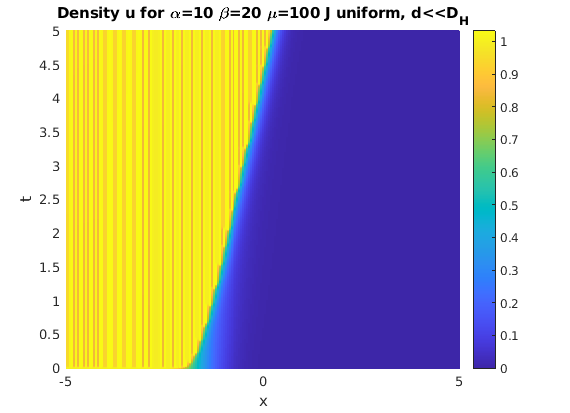}}
	\end{minipage}
	\vspace*{-2cm}
	\caption{Simulation results for \eqref{IBVPA} with $\beta =20$ and $\mu=100$. Upper row: $\alpha =2$, lower row: $\alpha =10$. First  column $J$ logistic, other columns $J$ uniform: 2nd column: $\rho=1$, 3rd column: $\rho=0.6$, 4th column: $\rho=0.05$. Upper rows: $d=D_H=1$, last row: $d\ll D_H$.}\label{fig:2}
\end{figure}

\noindent
To assess the effect of nonlocality  we performed simulations with the source term in the $u$-equation of \eqref{IBVPA} replaced by $\mu(h)u^\alpha(1-u^\beta)$. The results are shown in Figure \ref{fig:3}. The first two columns illustrate the case with the same source term for proton concentration as above, namely $g(u,h)=u(1-h)$, for which no patterns seem to develop (we tried several combinations of parameters, including those used for the patterns in Figure \ref{fig:2}). In fact, decreasing the value of $\rho$ in the uniform kernel $J(x;\rho)$ eventually leads to the local version of the system. 
The plots in the leftmost column were produced with $d=D_H$, while those in the middle column used $d\ll D_H$. The behavior of $u$ and $h$ is the same, with the difference of the second case inferring a slower spread of cells and protons. The last column in Figure \ref{fig:2} already shows the tendency of disappearing patterns when approaching the local case. The last column of Figure \ref{fig:3} shows the case where the source term in the $h$-equation is replaced by $g(u,h)=u+uh-\gamma h^2$, as proposed in Remark \ref{rem:immer-instabil}.\footnote{We tried several other source terms satisfying the conditions in Remark \ref{rem:immer-instabil}, e.g. $g(u,h)=uh/(1+uh+h)$, all resulting in the same qualitative behavior.}\\[-2ex]

\noindent
 No patterns for $u$ were observed for the local model, which, together with the simulations performed for intermediary values of $\rho$, suggests that the patterns are driven by the nonlocality of cell-cell interactions, more precisely by intraspecific competition. The simulations also confirm the long time behavior of the system, even in the local case.
 

\begin{figure}[h!]
\begin{center}	
	\begin{minipage}[h]{.28\linewidth}
		\raisebox{1.2cm}
		{\includegraphics[width=1\linewidth, height =2.95cm]{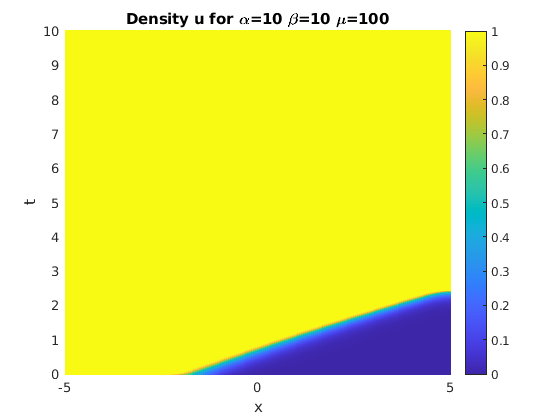}}\\[-4ex]
		\raisebox{2.3cm}
		{\includegraphics[width=1\linewidth, height = 2.95cm]{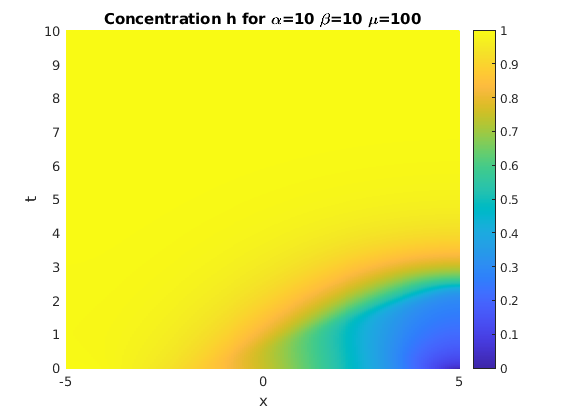}}
	\end{minipage}%
	\hspace{0.01cm}
	\begin{minipage}[h]{.28\linewidth}
		\raisebox{1.2cm}
		{\includegraphics[width=1\linewidth, height =2.95cm]{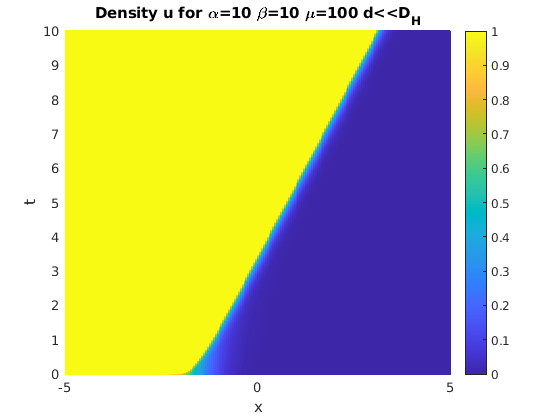}}\\[-4ex]
		\raisebox{2.3cm}
		{\includegraphics[width=1\linewidth, height = 2.95cm]{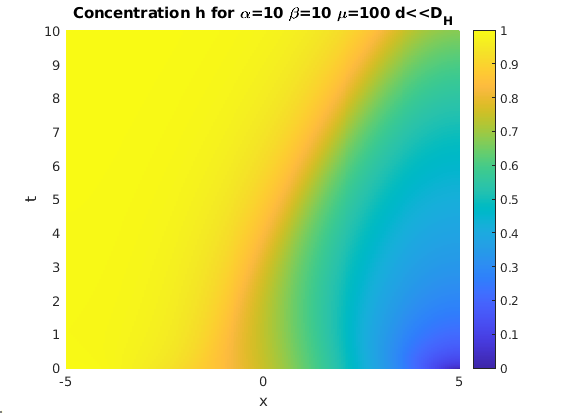}}
	\end{minipage}%
	\begin{minipage}[h]{.28\linewidth}	
		\raisebox{1.2cm}
		{\includegraphics[width=1\linewidth, height =2.95cm]{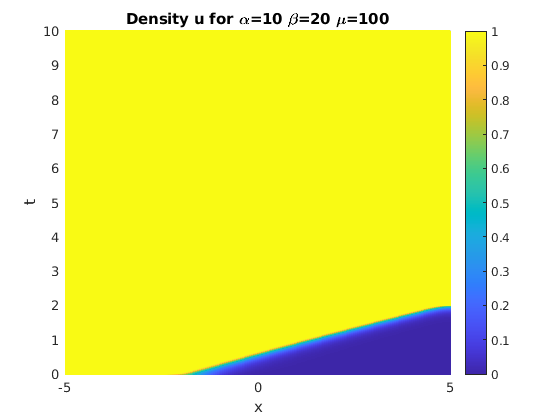}}\\[-4ex]
		\raisebox{2.3cm}
		{\includegraphics[width=1\linewidth, height = 2.95cm]{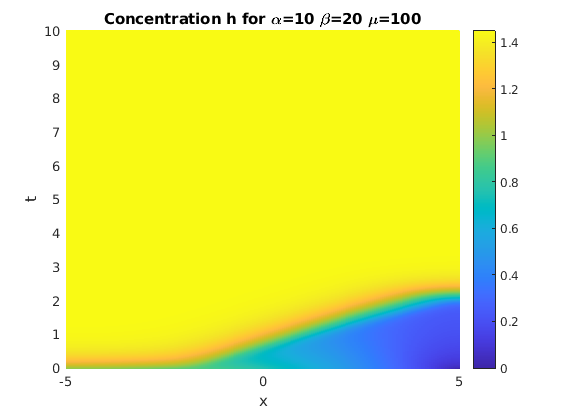}}		
	\end{minipage}
\end{center}
	\vspace*{-2cm}
	\caption{Simulation results for \eqref{IBVPA} with local source term $\mu(h)u^\alpha(1-u^\beta )$ replacing the one in the equation for $u$. Left and middle column: $g(u,h)=u(1-h)$ with $d=D_H$ and $d\ll D_H$, respectively. Right column: $g(u,h)=u+uh-\gamma h^2$, $d=D_H$.}\label{fig:3}
\end{figure}

\section{Discussion}\label{sec:discussion}

In this note we investigated a model describing pH-tactic behavior of cells with nonlocal source terms. As such, this work is extending the one in \cite{LCS}, which studied the Fisher-KPP equation with nonlocal intraspecific competition with various powers of the solution. In contrast to \cite{LCS} we handled here a problem in a bounded domain, and the population dynamics was coupled to that of the proton concentration, which also led to a taxis term. The proof of our results concerning global well-posedness and long time behavior relied, however, to a substantial  extent on the methods in \cite{LCS}. We also dealt here with space-dependent tensor coefficients in the motility terms, which involve myopic rather than Fickian diffusion. The dissipative effect of the repellent pH-taxis contributed to reducing some of the difficulties in the analysis - as long as the required conditions on the functions involved in the system are satisfied. \\[-2ex]

\noindent
Among the relatively few existing models with nonlocal source terms, the one in \cite{SMLC} is closely related, however it features several differences: the cells perform attractive haptotaxis towards gradients of extracellular matrix (ECM), the nonlocal source terms are contained in both equations, do not involve any powers, and the Fickian diffusion of cells has a constant coefficient. Our model requires less regularity for the interaction kernel and the motility coefficients involve a tensor and are more general. On the other hand, the nonexploding solution behavior is  favorized in our case by repellent chemotaxis. We also provided an informal model deduction and an assessment of the long time solution behavior. The analysis done in \cite{Negreanu2013} for a model with standard motility and with nonlocal source terms as in \cite{SMLC}, but with one or two species performing chemotaxis towards the same attractant imposes certain requirements on the forcing term of the latter, mainly in order to obtain the asymptotic behavior of the cell-related solution components. Our condition \eqref{assg} imposed for similar purposes on the source term of the tactic signal looks rather differently. The attraction-repulsion chemotaxis models considered in \cite{Ren2022} have closer similarities with our setting, as far as the nonlocal intraspecific interactions are concerned. Major differences occur through our system only featuring two equations, in the source terms of the chemical cues, and in the motility terms: the latter involve in our case the space-dependent tensor $\mathbb D(x)$ and myopic diffusion, while the nonlocal reaction term in the proton dynamics is more general. We also prove an explicit long time behavior of both solution components and provide a short analysis of space-time patterns (in 1D), along with numerical simulations.\\[-2ex]

\noindent
Our preliminary analysis in Section \ref{sec:pattern} and the simulation results in Section \ref{sec:numerics} suggest that patterns occur only in the nonlocal model, are not of Turing type, and seem to be driven by the nonlocal source terms and influenced by the chosen kernel and the combination of parameters in the nonlocal term. This is in line with the pattern behavior observed in \cite{LCS} and with other works concerning reaction-diffusion problems with nonlocal intra- and/or interspecific competition, cf. e.g. \cite{Fuentes2004,Han2019,PGM,Segal2013,Tian2017,Zaytseva2019}. Those works involved more or less similar source terms and no taxis, however the repellent pH-taxis contained in our model does not seem to have a relevant influence on the patterns. \\[-2ex]

\noindent
Open problems relate to a thorough study of patterns depending on the interplay between the parameters $\alpha$, $\beta$, $\mu$ and the influence of the kernel $J$. Moreover, the well-posedness, asymptotic and blow-up behavior, along with patterning are largely unknown in the case of a  degenerating motility tensor - the less so in combination with myopic diffusion and/or other types of taxis. Indeed, these can lead in the local case to very complex issues even in 1D, as shown e.g. in \cite{winkler,WiSu}.

\phantomsection
\printbibliography

\end{document}